\apptocmd{\sloppy}{\hbadness 10000\relax}{}{} 
\newcommand{\A}{{\mathbb A}}
\newcommand{\Q}{{\mathbb Q}}
\newcommand{\Z}{{\mathbb Z}}
\newcommand{\R}{{\mathbb R}}
\newcommand{\C}{{\mathbb C}}
\newcommand{\p}{\mathfrak p}
\newcommand{\OF}{{\mathfrak o}}
\newcommand{\GL}{{\rm GL}}
\newcommand{\PGL}{{\rm PGL}}
\newcommand{\SL}{{\rm SL}}
\newcommand{\SO}{{\rm SO}}
\newcommand{\Sp}{{\rm Sp}}
\newcommand{\GSp}{{\rm GSp}}
\newcommand{\PGSp}{{\rm PGSp}}
\newcommand{\St}{{\rm St}}
\newcommand{\triv}{{\mathbf1}}
\newcommand{\Kl}{\Gamma_0'}
\newcommand{\B}{B}
\newcommand{\forget}[1]{}
\def\qdots{\mathinner{\mkern1mu\raise0pt\vbox{\kern7pt\hbox{.}}\mkern2mu
\raise3.4pt\hbox{.}\mkern2mu\raise7pt\hbox{.}\mkern1mu}}
\newtheorem{lemma}{Lemma}[section]
\newtheorem{theorem}[lemma]{Theorem}
\newtheorem{corollary}[lemma]{Corollary}
\newtheorem{proposition}[lemma]{Proposition}
\newtheorem{definition}[lemma]{Definition}
\newtheorem{remark}[lemma]{Remark}
\newcommand\blfootnote[1]{
	\begingroup
	\renewcommand\thefootnote{}\footnote{#1}
	\addtocounter{footnote}{-1}
	\endgroup
}
\newcommand\appendix@section[1]{
	\refstepcounter{section}
	\orig@section*{Appendix \@Alph\c@section: #1}
	\addcontentsline{toc}{section}{Appendix \@Alph\c@section: #1}
}
\g@addto@macro\appendix{\let\section\appendix@section}
\let\orig@section\section
\title{On counting cuspidal automorphic representations for $\GSp(4)$}
\author{Manami Roy, Ralf Schmidt, and Shaoyun Yi}
\date{}
\begin{document}
	
\maketitle
\blfootnote{2010 Mathematics Subject Classification: Primary 11F46, 11F70. \\ \hspace*{0.2in } Key words and phrases. Plancherel measure, cuspidal automorphic representations, Siegel cusp forms, dimension formula, Arthur packets.}
\begin{abstract}
\noindent We find the number $s_k(p,\Omega)$ of cuspidal automorphic representations of $\GSp(4,\A_{\Q})$ with trivial central character such that the archimedean component is a holomorphic discrete series representation of weight $k\ge 3$, and the non-archimedean component at $p$ is an Iwahori-spherical representation of type $\Omega$ and unramified otherwise. Using the automorphic Plancherel density theorem, we show how a limit version of our formula for $s_k(p,\Omega)$ generalizes to the vector-valued case and a finite number of ramified places.
\end{abstract}

\tableofcontents

\section{Introduction}
\noindent
As is well known, classical modular forms are associated to automorphic representations of the adelic group $\GL(2,\A_{\Q})$. Similarly, Siegel modular forms of degree $2$ are related to automorphic representations of the adelic group $\GSp(4,\A_{\Q})$; see \cite{AsgariSchmidt2001}. The latter have trivial central character, and can hence be regarded as automorphic representations of $\PGSp(4,\A_{\Q})$. The split orthogonal group $\SO(5)$, which is isomorphic to $\PGSp(4)$ as algebraic groups, is one of the groups for which Arthur \cite{Arthur2013} has provided a classification of the discrete automorphic spectrum in terms of the automorphic representations of general linear groups. (This classification has been extended to $\GSp(4,\A_{\Q})$ by Gee and Ta\"ibi in \cite{GeeOlivier2019}.) In the following let $G=\GSp(4)$.
\begin{definition}\label{Definition of mOmegak}
 Let $k$ be a positive integer, and let $p$ be a prime.
	Let $S_k(p,\Omega)$ be the set of cuspidal automorphic representations $\pi\cong\otimes_{v \le \infty} \pi_v$ of $G(\A_{\Q})$ with trivial central character satisfying the following properties:
	\begin{enumerate}
		\item  $\pi_{\infty}$ is the lowest weight module with minimal $K$-type $(k,k)$; it is a holomorphic discrete series representation if $k\geq3$, a holomorphic limit of discrete series representation if $k=2$, and a non-tempered representation if $k=1$. (It was denoted by $\mathcal{B}_{k,0}$ in \cite[Sect.~3.5]{Schmidt2017}.)
		\item    $\pi_{v}$ is unramified for each $v\neq p, \infty$.
		\item    $\pi_{p}$ is an Iwahori-spherical representation of $G(\Q_p)$ of type $\Omega$.
	\end{enumerate}
\end{definition}
Here the representation type $\Omega$ is one of the types listed in Table~\ref{rep with Arthur type}: I, IIa, IIb, \ldots. This table lists all the irreducible, admissible representations of $G(F)$ supported in the minimal parabolic subgroup, where $F$ is a non-archimedean local field of characteristic zero. By general principles it is known that $S_k(p,\Omega)$ is finite. Let
\begin{align}
\label{no. of cup form}
s_k(p,\Omega)&\colonequals \# S_k(p,\Omega)
\end{align}
be its cardinality. In this paper we will find an explicit formula for $s_k(p,\Omega)$ except for $k=2$. Moreover, we give some partial results for $k=2$ in Theorem~\ref{Theorem of I and IIb}, Theorems~\ref{Theorem of Saito-Kurokawa type} - \ref{relations of Yoshida type} and Proposition~\ref{k2prop}.

In order to do so, we explore the relationship between Siegel cusp forms of degree $2$ and cuspidal automorphic representations of $G(\A_{\Q})$  in Sect.~\ref{Section on Iwahori-spherical representations and Arthur packets}. Using local representation theory of $G(\Q_p)$  (see \cite{RobertsSchmidt2007}), we get a system of equations involving the quantities $s_k(p,\Omega)$ and the global dimensions for the spaces of Siegel cusp forms $S_k(\Gamma_p)$ of degree $2$ under the various congruence subgroups $\Gamma_p$ defined in \eqref{classical congruence subgroups} further below. We partition the set $S_k(p,\Omega)$ according to the six Arthur types for $\GSp(4)$ and define the Arthur type versions of the quantities $s_k(p,\Omega)$ in Sect.~\ref{Section on Arthur packets}. We show that these quantities are all zero for the Arthur packets of types {\bf(B)} and {\bf(Q)} in Sect.~\ref{Section on B and Q types}.  

Many mathematicians have studied the dimension formulas for the spaces of Siegel modular forms of degree $2$; see for example the references in Table~\ref{historytable}.
We consider scalar-valued Siegel cusp forms $S_k(\Gamma_p)$ of degree $2$, weight $k$ and level~$p$.  Using dimension formulas for these spaces, we compute a general formula for $s_k(p,\Omega)$ and a rational expression for the generating series $\sum_{k\ge 3} s_k(p,\Omega)t^k$ in Sect.~\ref{countPYsec} and \ref{countGsec}. Hence we find the numbers $s_k(p,\Omega)$ in a uniform way, where $\Omega$ varies over the representation types described in Table~\ref{rep with Arthur type}. Wakatsuki studied these cuspidal automorphic representations for the square-integrable representation types $\Omega=\rm IVa, Va$ in \cite{Wakatsuki2013}.

Furthermore, using the global newform theory for various congruence subgroups of $G(\Q)$ defined in \cite[Sect.~3.3]{Schmidt200501}, we find  the dimensions of the spaces of newforms $S_k^{\rm new}(\Gamma_p)$ in terms of the quantities $s_k(p,\Omega)$ in Sect.~\ref{newformssec}. 

In the final section we study a connection between the quantities $s_k(p,\Omega)$ and the total Plancherel measure $m_{\Omega}$, defined in \eqref{Plancherel measure Iwahori-spherical}, of the tempered Iwahori-spherical representations of $\PGSp(4,\Q_p)$ of type $\Omega$. In Sect.~\ref{Relationship with the Plancherel measure} we compute the $m_\Omega$, working over any non-archimedean local field $F$ of characteristic zero. It turns out (see \eqref{relationship of sk and Plancherel measure} and \eqref{amequaleq}) that the leading terms in the \emph{global} formulas for $s_k(p,\Omega)$ are proportional to the purely \emph{local} quantities $m_\Omega$.
We will show how this is a consequence of the automorphic Plancherel density theorem from \cite{Shin2012}, specialized to $\PGSp(4)$, even though \cite{Shin2012} works on the level of real $L$-packets while we require a holomorphic discrete series representation at infinity. Finally, we will use the results of \cite{Shin2012} to generalize the limit version of our formula for $s_k(p,\Omega)$ to the vector-valued case and to more than one ramified place; see Theorem~\ref{Theorem gerenal PDT}.

\section*{Notation}\label{sec2: Definitions and notations}
We let
\begin{equation}\label{GSp4 classical version}
G=\GSp(4) \colonequals \{g\in\GL(4)\colon \:^tgJg=\lambda(g)J,\:\lambda(g)\in \GL(1)\}, \quad J=\begin{bsmallmatrix} &&&1\\&&1&\\&-1&&\\-1&&&\end{bsmallmatrix}.
\end{equation}
The function $\lambda$ is called the multiplier homomorphism. The kernel of this function is the symplectic group $\Sp(4)$. Let $Z$ be the center of $\GSp(4)$ and $\PGSp(4)=\GSp(4)/Z$.

\textbf{Congruence subgroups of $\GSp(4,F)$.}
Let $F$ be a non-archimedean local field of characteristic zero. Let $\OF$ be the ring of integers of $F$, and let $\p$ be the maximal ideal of $\OF$. We fix a generator $\varpi$ of $\p$. Let $q$ be the cardinality of $\OF/\p$, and let $\nu$ be the normalized absolute value on $F$; thus $\nu(\varpi)=q^{-1}$. We consider the following congruence subgroups of $\GSp(4, F)$: the \textit{paramodular group} ${\rm K}(\p)$ of level $\p$, the \textit{Siegel congruence subgroup} ${\rm Si}(\p)$ of level $\p$, the \textit{Klingen congruence subgroup} ${\rm Kl}(\p)$ of level $\p$ and the \textit{Iwahori subgroup} $I$, which are defined as follows.
\begin{equation}\label{congruence subgroups for GSp4F}
\begin{split}
 K&:=\GSp(4,\OF),\\
{\rm{K}}(\p)&\colonequals \{g\in\GSp(4,F)\colon \det(g)\in\OF^{\times}\}\cap \left[\begin{smallmatrix}
\OF&\OF&\OF&\p^{-1} \\ \p&\OF&\OF&\OF \\ \p&\OF&\OF&\OF \\ \p&\p&\p&\OF
\end{smallmatrix}\right],\\
{\rm Si}(\p) &\colonequals \GSp(4,\OF)\cap\left[\begin{smallmatrix}
\OF&\OF&\OF&\OF \\ \OF&\OF&\OF&\OF \\ \p&\p&\OF&\OF \\ \p&\p&\OF&\OF
\end{smallmatrix}\right],\\
{\rm Kl}(\p) &\colonequals \GSp(4,\OF)\cap\left[\begin{smallmatrix}
\OF&\OF&\OF&\OF \\ \p&\OF&\OF&\OF \\ \p&\OF&\OF&\OF \\ \p&\p&\p&\OF
\end{smallmatrix}\right],\\
I &\colonequals \GSp(4,\OF)\cap\left[\begin{smallmatrix}
\OF&\OF&\OF&\OF \\ \p&\OF&\OF&\OF \\ \p&\p&\OF&\OF \\ \p&\p&\p&\OF
\end{smallmatrix}\right].
\end{split}
\end{equation}

An admissible representation of $G(F)$ is called \textit{\textit{Iwahori-spherical}} if it has non-zero $I$-invariant vectors. These are exactly the constituents of the representations parabolically induced from an unramified character of the Borel subgroup. The complete list of all irreducible, admissible representations of $G(F)$ that are constituents of Borel-induced representations is given in Table~\ref{rep with Arthur type}. 

Let $(\pi, V)$ be an Iwahori-spherical representation of $G(F)$, and let $H$ be one of the subgroups defined in \eqref{congruence subgroups for GSp4F}. We denote by $V^H$, and sometimes by $\pi^H$, the space of $H$-fixed vectors. The quantity $\dim(V^H)$ is the same across all Iwahori-spherical representations of type $\Omega$; here $\Omega\in\{\mathrm{I},\mathrm{IIa},\ldots,\mathrm{VId}\}$ is one of the types in Table~\ref{rep with Arthur type}. We denote this common dimension by $d_{H,\Omega}$. These numbers are given explicitly in \cite[Table 3]{Schmidt200501}.

\textbf{Congruence subgroups for Siegel modular forms.} Let $S_{k}(\Gamma)$ be the space of Siegel cusp forms of degree $2$ and weight $k$ with respect to a congruence subgroup $\Gamma$ of $\Sp(4,\Q)$. (When speaking about Siegel modular forms, it is more convenient to realize symplectic groups using the symplectic form $J=\begin{bsmallmatrix} 0&1_2\\-1_2&0\end{bsmallmatrix}$). The following congruence subgroups, which correspond to the local groups in \eqref{congruence subgroups for GSp4F}, will be of particular interest:

\begin{equation}\label{classical congruence subgroups}
\begin{split}
\text{The full modular group }  \Sp(4,\Z),&\\
\text{The paramodular group of level } N:
{\rm K}(N)&=\begin{bsmallmatrix} \Z&N\Z&\Z&\Z\\ \Z&\Z&\Z&N^{-1}\Z\\ \Z&N\Z&\Z&\Z
\\ N\Z&N\Z&N\Z&\Z\end{bsmallmatrix}\ \cap\  \Sp(4,\Q),\\
\text{The Siegel congruence subgroup of level } N:
\Gamma_0(N)&=\begin{bsmallmatrix} \Z&\Z&\Z&\Z\\\Z&\Z&\Z&\Z\\ N\Z&N\Z&\Z&\Z
\\ N\Z&N\Z&\Z&\Z\end{bsmallmatrix} \ \cap\  \Sp(4,\Z),\\
\text{The Klingen congruence subgroup of level } N: 
\Kl(N)&=\begin{bsmallmatrix} \Z&N\Z&\Z&\Z\\\Z&\Z&\Z&\Z\\ \Z&N\Z&\Z&\Z
\\ N\Z&N\Z&N\Z&\Z\end{bsmallmatrix} \ \cap\  \Sp(4,\Z),\\
\text{The Borel congruence subgroup of level } N: 
\B(N)&=\begin{bsmallmatrix} \Z&N\Z&\Z&\Z\\\Z&\Z&\Z&\Z\\ N\Z&N\Z&\Z&\Z
\\ N\Z&N\Z&N\Z&\Z\end{bsmallmatrix} \ \cap\  \Sp(4,\Z).
\end{split}
\end{equation}

\section{Iwahori-spherical representations and Arthur packets}\label{Section on Iwahori-spherical representations and Arthur packets}
In this section, we first discuss the relationship between $s_k(p,\Omega)$ defined as in \eqref{no. of cup form} and the dimensions $\dim_{\C}S_k(\Gamma)$, where $\Gamma$ is one of the congruence subgroups in~\eqref{classical congruence subgroups} of prime level $p$. We introduce the refined quantities $s_k^{\rm (\mathbf{*})}(p,\Omega)$, where {\bf($*$)} is one of five types of Arthur packets.

\subsection{Siegel modular forms associated to the representations in \texorpdfstring{$S_k(p,\Omega)$}{}}
We review the connection between Siegel modular forms of degree $2$ and automorphic representations of $G(\A_{\Q})$; for more details see \cite{AsgariSchmidt2001} and \cite[Sect.~3.2]{Schmidt2017}. 

Let $\pi\cong\otimes_{p \le \infty}\pi_p$ be a cuspidal automorphic representation of $G(\A_{\Q})$ with trivial central character, where $\pi_p$ is an irreducible admissible representation of $G(\Q_p)$. Let $V_p$ be a model for $\pi_p$, so that $V\cong \otimes_{p} V_p$, a restricted tensor product. In order to get a holomorphic scalar-valued Siegel modular form of weight $k$, we need to make an assumption that $\pi_\infty$ is a holomorphic discrete series representation with minimal $K$-type $(k,k)$. Let $v_\infty \in V_\infty$ be a non-zero vector of weight $(k, k)$ in this $K$-type. For each finite prime $p$, let $v_p$ be a non-zero vector in $V_p$, and let $C_p$ be an open-compact subgroup of $G(\Q_p)$ stabilizing $v_p$. For almost all primes $p$, we assume that $v_p$ is the distinguished unramified vector and $C_p=G(\Z_p)$. By our choices, $\otimes_{p} v_p$ is a legitimate element in $\otimes_{p} V_p$ and it corresponds to a cusp form $\Phi \in V$ via the isomorphism $V\cong \otimes_{p} V_p$. Using strong approximation for $\Sp(4)$, the automorphic form $\Phi$ gives rise to a cuspidal Siegel eigenform $f$ of degree $2$ and weight $k$ with respect to the congruence subgroup $\Gamma=G(\Q) \cap G(\R)^{+}\prod_{p<\infty}C_p$ of $\Sp(4,\Q)$. Every eigenform in $S_k(\Gamma)$ arises in this way.

In particular, consider $\pi\cong\otimes\pi_v\in S_k(p,\Omega)$. Recall that $\pi_p$ is an Iwahori-spherical representation of type $\Omega$. Let $C_p$ be one of the compact open subgroups in \eqref{congruence subgroups for GSp4F}, and let $\Gamma_p$ be the corresponding congruence subgroup of $\Sp(4,\Q)$, as in \eqref{classical congruence subgroups}. Then every eigenform $f\in S_k(\Gamma_p)$ arises from a vector in $\pi_p^{C_p}$ by the above procedure. We thus obtain the formula
\begin{equation}\label{dimskreleq}
	\dim_{\C}S_k(\Gamma_p)= \sum\limits_{\Omega} \sum_{\pi  \in S_k(p,\Omega)} \dim\pi_p^{C_p}= \sum\limits_{\Omega} s_k(p,\Omega)\,d_{C_p,\Omega},
\end{equation}
which will be the basis for our determination of the quantities $s_k(p,\Omega)$. Here, the quantities $d_{C_p,\Omega}$ are defined in the notation section.

Note that the representations of type IVb and IVc are never unitary, and hence cannot occur as components of cuspidal, automorphic representations. Thus $s_k(p,\mathrm{IVb})=s_k(p,\mathrm{IVc})=0$. The one-dimensional representations of type IVd can also not occur as components of cuspidal, automorphic representations, so that $s_k(p,\mathrm{IVd})=0$. The family ${\rm Vc}$ is the same as the family ${\rm Vb}$ due to the fact that a representation of type ${\rm Vc}$ is just a representation of type ${\rm Vb}$ twisted by some suitable character; see \cite[(2.10)]{RobertsSchmidt2007}. So, in the following we will ignore ${\rm Vc}$, as it is subsumed under the ${\rm Vb}$ case.

\subsection{The quantities \texorpdfstring{$s_k(p,\Omega)$}{} and Arthur packets}\label{Section on Arthur packets}

Recall from \cite{Arthur2013} that every $\pi\in S_k(p,\Omega)$ appears in the discrete spectrum of $L^2(G(\Q)\backslash G(\A_{\Q}))$ inside a certain global Arthur packet. Using \cite[Theorem~1.5.2]{Arthur2013} for $\SO(5)$, we have
\begin{equation}\label{Arthur's main result}
 L^2_{\rm disc}(\SO(5,\Q)\backslash \SO(5,\A_{\Q}))\cong \bigoplus_{\psi \in \Psi_2(G)}\bigoplus_{\lbrace{\pi \in \Pi_{\psi}:\, \langle \cdot,\pi \rangle=\varepsilon_{\psi}\rbrace}} \pi,
\end{equation}
where $\psi$ is an Arthur parameter, and $\Pi_{\psi}$ is the corresponding global Arthur packet, consisting of certain equivalence classes of representations of $\SO(5,\A_{\Q})$. The quantities $\langle \cdot,\pi \rangle$ and $\varepsilon_{\psi}$ are characters of a centralizer group $\mathcal{S}_{\psi}\cong(\Z/2\Z)^t$. Since we identify the representations of $\SO(5,\A_{\Q})$ with the representations of $G(\A_{\Q})$ having trivial central character, a representation $\pi\in S_k(p,\Omega)$ appears in some global Arthur packet $\Pi_{\psi}$ on the right hand side of \eqref{Arthur's main result}.
The Arthur packets fall into six classes: the finite type {\bf(F)}, the general type {\bf(G)}, the Yoshida type {\bf(Y)}, and the types {\bf(P)}, {\bf(B)} and {\bf(Q)} consisting mostly of CAP representations (cuspidal associated to parabolics). Cuspidal representations cannot be finite-dimensional, so we will ignore the type {\bf(F)}. See \cite{Arthur2004,Schmidt2018} for more details about the Arthur packets for $\GSp(4)$.

Let $S_k^{(*)}(p,\Omega)$ be the set of those $\pi\in S_k(p,\Omega)$ that lie in an Arthur packet of type $(*)$. Evidently,
\begin{equation}\label{SkArthurdecompeq}
 S_k(p,\Omega)=S_k^{\rm (\mathbf{G})}(p,\Omega)\,\sqcup\,S_k^{\rm (\mathbf{Y})}(p,\Omega)\,\sqcup\,S_k^{\rm (\mathbf{P})}(p,\Omega)\,\sqcup\,S_k^{\rm (\mathbf{Q})}(p,\Omega)\,\sqcup\,S_k^{\rm (\mathbf{B})}(p,\Omega),
\end{equation}
so that
\begin{equation}\label{no. of cup form0}
 s_k(p,\Omega)=s_k^{\rm (\mathbf{G})}(p,\Omega)+s_k^{\rm (\mathbf{Y})}(p,\Omega)+s_k^{\rm (\mathbf{P})}(p,\Omega)+s_k^{\rm (\mathbf{Q})}(p,\Omega)+s_k^{\rm (\mathbf{B})}(p,\Omega),
\end{equation}
where $s_k^{(*)}(p,\Omega)=\#S_k^{(*)}(p,\Omega)$.

Not every representation can occur as a local component in every type of packet. In the last five columns of Table~\ref{rep with Arthur type} we indicate by $\bullet$ or $\circ$ the possible Arthur packet types in which a given local representation can occur. This information is based on the explicit determination of local Arthur packets given in \cite{Schmidt2020}.

A symbol ``$\circ$'' in the {\bf(Y)} column means that this representation can occur as a local component in a cuspidal automorphic representation $\pi$ inside an Arthur packet of Yoshida type, however such $\pi$ cannot be in $S_k(p,\Omega)$. The reason is that for discretely appearing $\pi\cong\otimes\pi_v$ in packets of type {\bf(Y)} the number of $\pi_v$'s that are non-generic has to be even; this is the concrete meaning of the multiplicity formula in \eqref{Arthur's main result} for Yoshida packets. Since for $\pi\in S_k(p,\Omega)$ the archimedean component is non-generic, the component $\pi_p$ must also be non-generic. Hence $s_k^{\mathbf{(Y)}}(p,\Omega)=0$ for the generic types $\Omega\in\{\mathrm{I},\mathrm{IIa},\mathrm{Va},\mathrm{VIa}\}$.

\begin{table}[H]
	\caption {Irreducible, admissible representations of $G(F)$ supported on the Borel subgroup.}
	\label{rep with Arthur type}
	\renewcommand{\arraystretch}{1.5}
	\renewcommand{\arraycolsep}{.3cm}
	\[ \begin{array}{ccccccccc} 
	\hline
	\multicolumn{2}{c}{\Omega}&\text{Representation}& \text{Tempered} &\textbf{(G)}&\textbf{(Y)}&\textbf{(P)}&\textbf{(Q)}&\textbf{(B)}\\
	\hline
	{\rm I} &&\chi_1\times\chi_2\rtimes\sigma\ \text{(irred.)}&\bullet&\bullet&\circ&&&\\
	\hline
	{\rm II} &\text{a}&\chi\St_{\GL(2)}\rtimes\sigma&\bullet&\bullet&\circ&&&\\
	\cline{2-9}
	&\text{b}&\chi\triv_{\GL(2)}\rtimes\sigma&&&&\bullet&&\\
	\hline
	{\rm III}&\text{a}&\chi\rtimes\sigma\St_{\GSp(2)} &\bullet&\bullet&&&&\\
	\cline{2-9}
	&\text{b}&\chi\rtimes\sigma\triv_{\GSp(2)}&&&&&\circ\\
	\hline
	{\rm IV}&\text{a}&\sigma\St_{\GSp(4)}&\bullet&\bullet&&&\\
	\cline{2-9}
	&\text{b}&L(\nu^2,\nu^{-1}\sigma\St_{\GSp(2)})&\multicolumn{6}{c}{\text{never unitary}}\\
	\cline{2-9}
	&\text{c}&L(\nu^{3/2}\St_{\GL(2)},\nu^{-3/2}\sigma)&\multicolumn{6}{c}{\text{never unitary}}\\
	\cline{2-9}
	&\text{d}&\sigma\triv_{\GSp(4)}&&&&&\\
	\hline
	{\rm V}&\text{a}&\delta([\xi,\nu\xi],\nu^{-1/2}\sigma)&\bullet&\bullet&\circ&&\\
	\cline{2-9}
	&\text{b}&L(\nu^{1/2}\xi\St_{\GL(2)},\nu^{-1/2}\sigma)&&&&\bullet&&\\
	\cline{2-9}
	&{\rm c}&L(\nu^{1/2}\xi\St_{\GL(2)},\xi\nu^{-1/2}\sigma) &&&&\circ&&\\
	\cline{2-9}
	&{\rm d}&L(\nu\xi,\xi\rtimes\nu^{-1/2}\sigma) &&&&&\circ&\circ\\
	\hline
	{\rm VI}&\text{a}&\tau(S,\nu^{-1/2}\sigma) &\bullet&\bullet&\circ&&&\\
	\cline{2-9}
	&{\rm b}&\tau(T,\nu^{-1/2}\sigma) &\bullet&\bullet&\bullet&\bullet&&\\
	\cline{2-9}
	&{\rm c}&L(\nu^{1/2}\St_{\GL(2)},\nu^{-1/2}\sigma) &&&&\bullet&\circ&\circ\\
	\cline{2-9}
	&{\rm d}&L(\nu,1_{F^\times}\rtimes\nu^{-1/2}\sigma) &&&&&\circ&\circ\\
	\hline
	\end{array}\]
\end{table}

We will prove in the next section that $s_k^{\mathbf{(Q)}}(p,\Omega)=s_k^{\mathbf{(B)}}(p,\Omega)=0$ for all $k$ and all $\Omega$; hence the $\circ$ in these columns of Table~\ref{rep with Arthur type}. By \eqref{no. of cup form0}, for all $k\geq1$,
\begin{equation}\label{no. of cup form1}
 s_k(p,\Omega)=s_k^{\rm (\mathbf{G})}(p,\Omega)+s_k^{\rm (\mathbf{P})}(p,\Omega)+s_k^{\rm (\mathbf{Y})}(p,\Omega),
\end{equation}
where $s_k^{\rm (\mathbf{Y})}(p,\Omega)=0$ unless $\Omega=\mathrm{VIb}$.

Note that Arthur packets of type {\bf(G)} are \emph{stable}, meaning one can switch within local $L$-packets and still retain the automorphic property. Most representations in Table \ref{rep with Arthur type} constitute singleton $L$-packets, except VIa and VIb, which constitute a $2$-element $L$-packet, and Va, which shares an $L$-packet with a non-generic supercuspidal. Hence $s_k^{\rm \mathbf{(G)}}(p, {\rm VIa})=s_k^{\rm \mathbf{(G)}}(p, {\rm VIb})$, and we denote this common number by $s_k^{\rm \mathbf{(G)}}(p, {\rm VIa/b})$:
\begin{equation}\label{IIIa/VI. VIa, VIb I}
 s_k^{\rm \mathbf{(G)}}(p, {\rm VIa/b})=s_k^{\rm \mathbf{(G)}}(p, {\rm VIa})=s_k^{\rm \mathbf{(G)}}(p, {\rm VIb}).
\end{equation}
A look at \cite[Table 3]{Schmidt200501} shows that, for each parahoric subgroup $H$, the dimensions of $H$-invariant vectors in a type IIIa representation is the same as the dimensions of $H$-invariant vectors for the $L$-packet VIa/b combined. (The reason for this is that both IIIa and ${\rm VIa}\oplus{\rm VIb}$ are of the form $\chi\rtimes\sigma\St_{\GSp(2)}$ for unramified characters $\chi$ and $\sigma$. The dimension of the space of $H$-invariant vectors in such a parabolically induced representation does not depend on $\chi$ and $\sigma$.) Hence our methods will not be able to determine the numbers $s_k^{\rm \mathbf{(G)}}(p, {\rm IIIa})$ and $s_k^{\rm \mathbf{(G)}}(p, {\rm VIa/b})$ separately, but we will be able to determine
\begin{equation}\label{IIIa/VI. VIa, VIb II}
 s_k(p, {\rm IIIa+VIa/b})\colonequals s_k^{\rm \mathbf{(G)}}(p, {\rm IIIa})+s_k^{\rm \mathbf{(G)}}(p, {\rm VIa/b}).
\end{equation}
Summarizing, we will compute the following $s_k(p,\Omega)$.
\begin{equation}
\label{mk that we compute}
\renewcommand{\arraystretch}{1.3}
\renewcommand{\arraycolsep}{.2cm}
\begin{array}{ cc} 
\toprule
\text{Tempered } &\text{Saito-Kurokawa type}\\[-1ex]
\text{representations} &\text{ representations}\\
\toprule
s_k(p,{\rm I})=s_k^{\rm \mathbf{(G)}}(p,{\rm I})&s_k(p,{\rm IIb})=s_k^{\rm \mathbf{(P)}}(p,{\rm IIb})\\
\toprule
s_k(p,{\rm IIa})=s_k^{\rm \mathbf{(G)}}(p,{\rm IIa})&s_k(p,{\rm Vb})=s_k^{\rm \mathbf{(P)}}(p,{\rm Vb})\\
\toprule
s_k(p, {\rm IIIa+VIa/b})=s_k^{\rm \mathbf{(G)}}(p, {\rm IIIa})+s_k^{\rm \mathbf{(G)}}(p, {\rm VIa/b})
&s_k^{\rm \mathbf{(P)}}(p,{\rm VIb})\\
\toprule
s_k(p,{\rm IVa})=s_k^{\rm \mathbf{(G)}}(p,{\rm IVa})&s_k(p,{\rm VIc})=s_k^{\rm \mathbf{(P)}}(p,{\rm VIc})\\
\toprule
s_k(p,{\rm Va})=s_k^{\rm \mathbf{(G)}}(p,{\rm Va})&\\
\toprule
s_k^{\rm \mathbf{(Y)}}(p,{\rm VIb})&\\
\bottomrule
\end{array}
\end{equation}

Representations of type {\bf(P)} and {\bf(Y)} are parametrized by automorphic representations of $\GL(2,\A_\Q)$. Hence, it is not difficult to express the $s_k$'s for the Saito-Kurokawa type representations as well as $s_k^{\rm \mathbf{(Y)}}(p,{\rm VIb})$ in terms of dimension formulas for certain elliptic modular forms; see Theorem~\ref{Theorem of I and IIb}, Theorem~\ref{Theorem of Saito-Kurokawa type}, and Theorem~\ref{relations of Yoshida type}. It then remains to determine the quantities $s_k(p,\Omega)$ for the five generic types I, IIa, IIIa+VIa/b, IVa, Va. On the other hand, we have the five congruence subgroups $\Gamma_p$ appearing in \eqref{classical congruence subgroups} (for $N=p$). It turns out that \eqref{dimskreleq} is a linear system with an invertible matrix, allowing us to calculate the $s_k(p,\Omega)$ from the $\dim_{\C}S_k(\Gamma_p)$. We do not have a good theoretical explanation for the non-singularity of this linear system.

\subsection{Packets of type {\bf(B)} and {\bf(Q)} are not Iwahori-spherical}\label{Section on B and Q types}
In this section we will prove that $s_k^{\mathbf{(Q)}}(p,\Omega)=s_k^{\mathbf{(B)}}(p,\Omega)=0$ for all $k$ and all $\Omega$. This is a consequence of a more general result about Arthur packets of type {\bf(Q)} or {\bf(B)}. This more general result also implies that certain spaces of Siegel modular forms of weight $1$ are zero.
\begin{proposition}\label{BQautoprop}
 Let $\pi\cong\otimes\pi_v$ be an automorphic representation of $\PGSp(4,\A_\Q)$ in an Arthur packet of type {\bf(Q)} or {\bf(B)}. Then $\pi_p$ is not Iwahori-spherical for at least one prime $p$.
\end{proposition}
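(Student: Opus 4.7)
The strategy is to show that every global Arthur parameter $\psi$ of type $\mathbf{(B)}$ or $\mathbf{(Q)}$ for $\PGSp(4,\A_{\Q})$ necessarily involves a nontrivial quadratic Hecke character of $\Q$, and then to translate its unavoidable ramification at some finite prime into non-Iwahori-sphericity of the corresponding local component.

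First I would recall from \cite{Arthur2004, Schmidt2018} the explicit shape of the relevant global Arthur parameters. A parameter of type $\mathbf{(B)}$ takes the form $\psi=(\chi_1\boxtimes\nu(2))\boxplus(\chi_2\boxtimes\nu(2))$ with distinct Hecke characters $\chi_1,\chi_2$ of $\A_{\Q}^\times/\Q^\times$; the self-duality of $\psi$ as an $\Sp(4,\C)$-valued parameter pairs the two summands into duals, and combined with ellipticity this forces $\chi\colonequals\chi_1=\chi_2^{-1}$ to be a nontrivial character of order $2$. A parameter of type $\mathbf{(Q)}$ takes the form $\psi=(\chi\boxtimes\nu(3))\boxplus(\chi'\boxtimes\nu(1))$, and the condition that $\psi$ lands in $\Sp(4,\C)$ together with ellipticity again forces $\chi$ to be a nontrivial quadratic Hecke character. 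In either case a nontrivial quadratic Hecke character $\chi$ of $\A_{\Q}^\times/\Q^\times$ enters the parameter.

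Next I would invoke the classical fact that every nontrivial quadratic Hecke character of $\A_{\Q}^\times/\Q^\times$ has nontrivial conductor: via class field theory such a character corresponds to a nontrivial quadratic extension of $\Q$, and since $\Q$ has no nontrivial unramified extensions, at least one finite prime $p_0$ lies in the conductor. Consequently the local component $\chi_{p_0}$ is a nontrivial character of $\Z_{p_0}^\times$, i.e., nontrivial on inertia. Passing from $\psi$ to the associated $L$-parameter via Arthur's trick $\phi_{\psi}(w)=\psi\bigl(w,\mathrm{diag}(|w|^{1/2},|w|^{-1/2})\bigr)$, one sees that $\phi_{\psi,p_0}$ inherits the ramified character factor $\chi_{p_0}$ and is therefore nontrivial on the inertia subgroup $I_{\Q_{p_0}}$. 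By Borel's theorem on Iwahori-spherical representations, as recorded for $\GSp(4)$ in \cite{RobertsSchmidt2007}, an irreducible admissible representation of $\GSp(4,\Q_{p_0})$ has a nonzero Iwahori-fixed vector if and only if its $L$-parameter is trivial on inertia. Hence every representation in the local Arthur packet $\Pi_{\psi_{p_0}}$, and in particular $\pi_{p_0}$, fails to be Iwahori-spherical, which is the desired conclusion.

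The principal obstacle is the first step: justifying that the character data in a $\mathbf{(B)}$ or $\mathbf{(Q)}$ parameter is genuinely nontrivial and of finite order, rather than, for instance, an unramified unitary quasi-character $|\cdot|^{it}$. This hinges on a careful reading of the ellipticity, self-duality, and central-character conditions inherent in Arthur's classification for $\SO(5)$: the distinctness of the summands together with the $\Sp(4,\C)$-valuedness forces the relevant character to be self-dual, hence of order dividing $2$, while ellipticity rules out the trivial case. Should that analytic route prove fragile, a robust backup is a direct case check against Table~\ref{rep with Arthur type}: enumerate the few Iwahori-spherical local constituents that can appear in a $\mathbf{(B)}$ or $\mathbf{(Q)}$ packet, match them against the local Arthur packet data of \cite{Schmidt2020}, and verify that no globally coherent assembly of such local components can be unramified at every prime.
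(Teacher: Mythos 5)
Your skeleton matches the paper's proof exactly: extract a nontrivial quadratic Hecke character that the parameter forces to exist, use that no such character of $\Q^\times\backslash\A^\times$ can be everywhere unramified to produce a bad prime $p$, and conclude that $\pi_p$ is not Iwahori-spherical. But two of your steps, as written, fail. First, the parameter shapes are misremembered. A type {\bf(B)} parameter is $\chi_1\boxtimes\nu(2)\boxplus\chi_2\boxtimes\nu(2)$ with $\chi_1,\chi_2$ \emph{distinct quadratic} characters; since quadratic characters are their own inverses, your deduction $\chi_1=\chi_2^{-1}$ would give $\chi_1=\chi_2$ and contradict distinctness. Worse, one of $\chi_1,\chi_2$ is allowed to be trivial, so neither character individually need ramify anywhere; the character that is guaranteed to be nontrivial quadratic, hence ramified at some finite prime, is the product $\chi_1\chi_2$ --- and that is what the paper uses. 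For type {\bf(Q)}, the shape $(\chi\boxtimes\nu(3))\boxplus(\chi'\boxtimes\nu(1))$ is not a symplectic parameter at all (a character tensored with an odd-dimensional representation of the Arthur $\SL(2)$ is of orthogonal type, so it cannot be a summand of an elliptic parameter valued in $\Sp(4,\C)$); the correct shape is $\mu\boxtimes\nu(2)$ with $\mu$ a self-dual cuspidal representation of $\GL(2,\A_\Q)$ of orthogonal (dihedral) type, whose nontrivial quadratic central character $\xi$ is the character you need.

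Second, your final step conflates local Arthur packets with $L$-packets. The Borel--Casselman criterion tests the $L$-parameter of each individual member of $\Pi_{\psi_p}$, but the members of a local Arthur packet do not all share the $L$-parameter $\phi_{\psi_p}$: only the Langlands-quotient member does, while the other members generally have different (discrete or supercuspidal) parameters. So ``$\phi_{\psi_p}$ is nontrivial on inertia'' does not by itself rule out an Iwahori-spherical member elsewhere in the packet. Closing this requires the explicit description of the local {\bf(B)} and {\bf(Q)} packets, i.e.\ Tables 1 and 3 of \cite{Schmidt2020} --- which is precisely the fallback you propose at the end, and is the argument the paper actually runs. With the character identified correctly (namely $\chi_1\chi_2$ for {\bf(B)} and $\xi=\omega_\mu$ for {\bf(Q)}) and the table check substituted for the inertia argument, your proof becomes the paper's.
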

\begin{proof}
Assume first that $\pi$ lies in an Arthur packet of type {\bf(Q)}. Recall from \cite{Schmidt2020} that one of the parameters entering into the definition of such a packet is a non-trivial quadratic character $\xi=\otimes\xi_v$ of $\Q^\times\backslash\A^\times$. There exists a prime $p$ for which the local character $\xi_p$ is ramified. Looking at Table 3 of \cite{Schmidt2020}, the description of the local packets of type {\bf(Q)}, we see that none of the possibilities for $\pi_p$ (corresponding to a ramified $\xi_p$) is an Iwahori-spherical representation.

The proof for $\pi$ being in a packet of type {\bf(B)} is similar. Such packets are parametrized by pairs $(\chi_1,\chi_2)$ of distinct, quadratic characters of $\chi_i=\otimes\chi_{i,v}$ of $\Q^\times\backslash\A^\times$. There exists a prime $p$ for which $\chi_{1,p}\chi_{2,p}$ is ramified. Then a look at Table 1 of \cite{Schmidt2020}, the description of the local packets of type {\bf(B)}, shows that none of the corresponding possibilities for $\pi_p$ is Iwahori-spherical.
\end{proof}

Recall, for example from \cite[Sect.~4.1]{Schmidt2017}, that vector-valued Siegel cusp forms of degree $2$ form spaces $S_{k,j}(\Gamma)$, where $k$ is a positive integer and $j$ is a non-negative integer. We have $S_{k,0}(\Gamma)=S_k(\Gamma)$. The following generalizes \cite[Theorem~6.1]{Ibukiyama2007}.
\begin{corollary}\label{BQautopropcor1}
 Let $N$ be a square-free positive integer. Then $S_{1,j}(\B(N))=0$ for any $j\geq0$.
\end{corollary}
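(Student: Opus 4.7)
The plan is to associate any hypothetical nonzero $f\in S_{1,j}(\B(N))$ to a cuspidal automorphic representation of $\PGSp(4,\A_\Q)$ and obtain a contradiction by going through all Arthur packet types that could host it, using Proposition~\ref{BQautoprop} for the types {\bf(Q)} and {\bf(B)} and an archimedean argument for the remaining Saito--Kurokawa type {\bf(P)}.

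First I would split $f$ into Hecke eigenforms and attach to one such eigenform a cuspidal automorphic representation $\pi\cong\otimes_v\pi_v$ of $\PGSp(4,\A_\Q)$, following the procedure recalled in Section~\ref{Section on Iwahori-spherical representations and Arthur packets}. Since $N$ is squarefree, the open compact subgroup of $\GSp(4,\A_\Q^{\mathrm{fin}})$ corresponding to $\B(N)$ is hyperspecial at primes $p\nmid N$ and is the Iwahori subgroup at primes $p\mid N$; hence $\pi_p$ is Iwahori-spherical at every finite prime. At the archimedean place, the condition $k=1$ (regardless of $j$) forces $\pi_\infty$ to be a non-tempered lowest-weight module, so by Arthur's classification~\eqref{Arthur's main result} the packet containing $\pi$ must be of CAP type, i.e., of type {\bf(F)}, {\bf(P)}, {\bf(Q)}, or {\bf(B)}.

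I would then eliminate these four types in turn. Type {\bf(F)} is excluded because $\pi$ is cuspidal and hence infinite-dimensional. Types {\bf(Q)} and {\bf(B)} are ruled out by Proposition~\ref{BQautoprop}, which forces a non-Iwahori-spherical $\pi_p$ at some prime and contradicts the previous paragraph. For the Saito--Kurokawa type {\bf(P)}, the packet is parametrized by a cuspidal automorphic representation $\mu$ of $\GL(2,\A_\Q)$ whose archimedean component is pinned down by the Siegel weight: for $k=1$ this forces $\mu$ to correspond to a holomorphic elliptic cusp form of weight $2k-2=0$, which does not exist. Thus no Arthur packet can contain $\pi$, giving $S_{1,j}(\B(N))=0$.

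The subtlest step is the archimedean bookkeeping in the Saito--Kurokawa case: one has to match the weight datum $(1,j)$ of the Siegel form to the archimedean Arthur parameter of the $\GL(2)$ companion $\mu$, using the conventions of \cite{Schmidt2017} and the local packet tables of \cite{Schmidt2020}, and confirm that no admissible $\mu_\infty$ exists. For $j=0$ this reduces to the classical vanishing of weight-$0$ elliptic cusp forms; for $j\geq 1$ one needs the analogous statement for the archimedean $\GL(2)$-parameter attached to a vector-valued weight-$(1,j)$ Siegel form, which I would verify case-by-case from the archimedean half of the {\bf(P)}-packet description.
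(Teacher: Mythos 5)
Your proposal follows essentially the same route as the paper: attach a cuspidal automorphic representation $\pi$ to a hypothetical eigenform, observe that every finite component is Iwahori-spherical, eliminate the Arthur types one by one, and land the contradiction for types {\bf(Q)} and {\bf(B)} via Proposition~\ref{BQautoprop}. Two steps, however, are justified more loosely than in the paper. First, you assert that non-temperedness of $\pi_\infty$ together with ``Arthur's classification'' forces $\pi$ to be CAP. Arthur's classification alone does not give this: a priori a packet of type {\bf(G)} or {\bf(Y)} could have a non-tempered local component, since the Ramanujan conjecture is open. What actually excludes {\bf(G)} and {\bf(Y)} is that the archimedean parameter of the weight-$1$ lowest weight module is \emph{too} non-tempered to be the archimedean parameter of a cuspidal representation of $\GL(4,\A)$ (resp.\ of the relevant $\GL(2)$ pairs), by the known weak bounds toward Ramanujan at the archimedean place; the paper makes exactly this point in a parenthetical remark citing \cite[(3.17)]{Schmidt2017}. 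You should supply that estimate rather than appeal to the classification itself. Second, your exclusion of type {\bf(P)} via the nonexistence of weight-$0$ elliptic cusp forms works for $j=0$, but for $j\geq1$ you explicitly defer the verification; the paper closes this uniformly in $j$ by checking against the description of the local (archimedean) {\bf(P)}-packets in Table~2 of \cite{Schmidt2020}, which is the cleaner way to finish. Neither issue is a wrong turn, but both need to be filled in for the argument to be complete.
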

\begin{proof}
Eigenforms in $S_{1,j}(\B(N))$ would have to originate from cuspidal, automorphic representations $\pi\cong\otimes\pi_v$ of $\PGSp(4,\A_\Q)$ for which $\pi_\infty$ is one of the lowest weight modules described in \cite[Sect.~2.4]{Schmidt2017}. Such lowest weight modules are non-tempered, and hence $\pi$ cannot be of type {\bf(G)}. (More precisely, weak archimedean estimates preclude the local parameter given in \cite[(3.17)]{Schmidt2017} to be that of the archimedean component of a cusp form on $\GL(4,\A)$.) For a similar reason, $\pi$ cannot be of type {\bf(Y)}. It can also not be of type {\bf(P)}, by the description of the local packets of type {\bf(P)} given in Table 2 of \cite{Schmidt2020}. It follows that $\pi$ must be of type {\bf(Q)} or {\bf(B)}. By Proposition~\ref{BQautoprop}, $\pi_p$ is not Iwahori-spherical for at least one prime $p$. But \emph{all} $\pi_p$ would have to be Iwahori-spherical in order to extract from $\pi$ an element of $S_{1,j}(\B(N))$.
\end{proof}
\begin{corollary}\label{BQautopropcor2}
 {\rm$s_k^{\textbf{(Q)}}(p,\Omega)=s_k^{\textbf{(B)}}(p,\Omega)=0$} for all $k$ and all $\Omega$. 
\end{corollary}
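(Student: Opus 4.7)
The plan is to derive this immediately from Proposition~\ref{BQautoprop} by contradiction. Suppose for some $k$ and $\Omega$ the set $S_k^{\mathbf{(Q)}}(p,\Omega)$ is non-empty, and let $\pi\cong\otimes_v \pi_v$ be a representation in it. By Definition~\ref{Definition of mOmegak}, $\pi_q$ is unramified for every finite prime $q\neq p$, and $\pi_p$ is Iwahori-spherical of type $\Omega$. Since an unramified representation has a non-zero vector fixed by the hyperspecial maximal compact subgroup $\GSp(4,\Z_q)$, and the standard Iwahori subgroup $I$ is contained in this maximal compact, any unramified representation is in particular Iwahori-spherical.

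Therefore every finite local component $\pi_q$, including $\pi_p$ itself, admits non-zero Iwahori-fixed vectors. On the other hand, Proposition~\ref{BQautoprop} asserts that for any automorphic representation in an Arthur packet of type $\mathbf{(Q)}$, there must exist at least one prime at which the local component fails to be Iwahori-spherical. This contradicts what we just observed, so $S_k^{\mathbf{(Q)}}(p,\Omega)$ must be empty, and hence $s_k^{\mathbf{(Q)}}(p,\Omega)=0$. The argument for type $\mathbf{(B)}$ is identical, invoking the corresponding half of Proposition~\ref{BQautoprop}. Since the entire argument is a direct invocation of the preceding proposition together with the trivial observation that unramified implies Iwahori-spherical, there is no real obstacle to overcome --- all of the substantive work has already been carried out in the proof of Proposition~\ref{BQautoprop}, which exploited the ramification of the quadratic characters $\xi$ (in the $\mathbf{(Q)}$ case) and $\chi_1\chi_2$ (in the $\mathbf{(B)}$ case) parametrizing these global packets.
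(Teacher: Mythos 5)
Your proof is correct and follows exactly the paper's own argument: the paper likewise deduces the corollary immediately from Proposition~\ref{BQautoprop} by noting that every element of $S_k(p,\Omega)$ is Iwahori-spherical at all finite places. Your added observation that unramified implies Iwahori-spherical is a correct (and implicit in the paper) justification of that fact.
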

\begin{proof}
Since the elements of $S_k(p,\Omega)$ are, by definition, Iwahori-spherical at all finite places, this is an immediate consequence of Proposition~\ref{BQautoprop}.
\end{proof}

\section{Counting certain automorphic representations}\label{Section of Counting some cuspidal automorphic representations}
In Sect.~\ref{countPYsec} we give formulas for all $s_k^{\textbf{(P)}}(p,\Omega)$ and $s_k^{\textbf{(Y)}}(p,\Omega)$ (i.e., all the lifts), and in Sect.~\ref{countGsec} for all the $s_k^{\textbf{(G)}}(p,\Omega)$ (the non-lifts). In Sect.~\ref{newformssec} we express the dimensions of the spaces of newforms for some congruence subgroups of prime level in terms of the quantities we found. 

For the results in this section, we introduce some notation. The symbol $t = [t_0, t_1, . . . , t_{n-1};n]_k$ means that $t = t_i$ if $k \equiv i  \pmod{n}$. Then we define
\begin{equation}
\label{new notations}
\begin{split}
f_4(k)&=[k-2, -k+1, -k+2, k-1; 4]_k, \hspace{20ex} c_3(k)=[1,-1,0; 3]_k,\\
f_6(k)&=[k-3, -2k+2, -2k+4,k, k-1, k-2; 6]_k, \hspace{9ex}  \hat{c}_3(k)=[0,1,-1; 3]_k,\\
c_{12}(k)&=[1, 0, 0, -1, -1, -1, -1, 0, 0, 1, 1, 1; 12]_k, \hspace{14ex} c_4(k)=[1,0,0,-1; 4]_k,\\
c_6(k)&=[1,0, 0,-1, 0, 0; 6]_k, \hspace{34ex} c^{\prime}_4(k)=[1,-1,-1,1; 4]_k,\\
c^{\prime}_6(k)&=[0,1, 0, 0, -1, 0; 6]_k, \hspace{34ex} c_5(k)=[1,0,0,-1, 0;5]_k,\\
\hat{c}_6(k) &=[0,1, 1,0, -1, -1; 6]_k. 
\end{split}
\end{equation}
Let $\delta_{k,n}$ be the Kronecker symbol, i.e., $\delta_{k,n}=1$ if $k=n$ and $\delta_{k,n}=0$ otherwise. For any prime $p$, we define 
\begin{equation}
	\label{legendre}
\begin{split}
\left(\frac{-1}{p}\right)&=\begin{cases}
0&\quad\text{ if } p=2,\\
1&\quad\text{ if } p\equiv 1\pmod 4,\\
-1&\quad\text{ if } p\equiv 3\pmod 4.
\end{cases}\qquad\quad\ 
\left(\frac{-3}{p}\right)=\begin{cases}
0&\quad\text{ if } p=3,\\
1&\quad\text{ if } p\equiv 1\pmod 3,\\
-1&\quad\text{ if } p\equiv 2\pmod 3.
\end{cases}
\end{split}
\end{equation}
Note that $\left(\frac{\cdot}{p}\right)$ is not the usual Legendre symbol as we define $\left(\frac{-1}{2}\right)=0$.
\subsection{The representation types \texorpdfstring{${\rm I, IIb,  Vb, VIb, VIc}$}{PDFstring}}\label{countPYsec}
 In this section, we compute the numbers $s_k(p,\Omega)$ in \eqref{mk that we compute} when the representations in $S_k(p,\Omega)$ are of Saito-Kurokawa type {\bf(P)} or of Yoshida type {\bf(Y)}; these are two kinds of liftings from elliptic cuspidal automorphic representations. 

 We denote by $S_{k}^{\pm}(\SL(2,\Z))$ the space spanned by the eigenforms in $S_k(\SL(2,\Z))$ which have the sign $\pm 1$ in the functional equation of their $L$-functions. For the theorems in this section, we use the following standard formula
 \begin{equation}
 \label{dim Sk(SL2Z)}
 \begin{split}
 \dim_{\C}  S_{k}(\SL_2(\Z))=&\frac{k-1}{2^23}+\frac{1}{2^2} (-1)^{\frac k2}+\frac 13\left(c_3(k)+\hat{c}_3(k)\right)-\frac 12 +\delta_{k,2}.
 \end{split}
 \end{equation}
 
 \begin{theorem}\label{Theorem of I and IIb} For $k\ge 1$ we have
	\begin{align}
		\label{General formula for I and IIb eq1}s_k(p,{\rm I})
	=&\frac{1}{2^{7}3^{3}5}(k-2)(k -1)(2k-3)+\frac{7(-1)^k}{2^{7}3^{2}}(k-2)(k-1)+\frac{5}{2^43}+\delta_{k, 3}-\delta_{k, 2}-\frac{47}{2^{7}3^{3}}(2k -3)\nonumber\\
+&\frac{61}{2^7}(-1)^k-\frac{13}{2^23^3}\hat{c}_3(k)-\frac{1}{2\cdot 3}c_3(k)
+\frac{1}{2^{5}3}f_4(k)-\frac{1}{2^{3}}c^{\prime}_4(k)+\frac{1}{2^{3}}c_4(k)+\frac{1}{5}c_5(k)\\
+&\frac{1}{2^23^3}f_6(k)+\frac{1}{2^23}\hat{c}_6(k)+\frac{1}{3^2}c_6(k)+\frac{1}{2^23}c_{12}(k)-\frac{2k-3}{2^4}(-1)^k-\frac{1}{2\cdot 3}\hat{c}_3(k)(-1)^k.\nonumber\\
				\label{General formula for I and IIb eq2}s_k(p,{\rm IIb})=&\begin{cases}
	\displaystyle\frac{2k-3}{2^23}-\frac{3}{2^2}+\frac{1}3\hat{c}_3(k)+\delta_{k, 2}&\text{if $k$ is even},\\[2ex]
	0&\text{if $k$ is odd}.
	\end{cases}
	\end{align}
	Equivalently,
 \begin{align}\label{Theorem of I and IIb eq1}
  \sum\limits_{\substack{k \ge 1}}s_k(p,{\rm I})t^k
  =&\frac{t^{35}+1}{(1-t^4)(1-t^6)(1-t^{10})(1-t^{12})}-\frac{1}{(1-t^4)(1-t^6)}-\frac{t^{10}}{(1-t^2)(1-t^{6})},\\
  \label{Theorem of I and IIb eq2}\sum\limits_{\substack{k \ge 1 }}s_k(p,{\rm IIb})t^k=&\frac{t^{10}}{(1-t^2)(1-t^{6})}.
 \end{align}
\end{theorem}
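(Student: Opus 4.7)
The plan is to derive both formulas from Arthur's classification (as recorded in Section~\ref{Section on Arthur packets}) combined with the basic identity \eqref{dimskreleq} at the hyperspecial parahoric, using Igusa's classical dimension formula and Saito-Kurokawa lifting as the only external inputs. The derivation naturally splits into the two parts of the theorem.

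For $s_k(p,\mathrm{IIb})$: Table~\ref{rep with Arthur type} combined with Corollary~\ref{BQautopropcor2} shows that type~IIb can occur only in Arthur packets of type $\mathbf{(P)}$, so $s_k(p,\mathrm{IIb})=s_k^{\mathbf{(P)}}(p,\mathrm{IIb})$. Saito-Kurokawa packets are parametrized by cuspidal automorphic representations $\tau$ of $\GL(2,\A_\Q)$; since the Iwahori-spherical type~IIb $\chi\triv_{\GL(2)}\rtimes\sigma$ is unramified and since $\pi_v$ is unramified at every other finite $v$, the corresponding $\tau$ must be everywhere unramified at finite places, i.e., it comes from a classical Hecke eigenform in $S_{2k-2}(\SL(2,\Z))$. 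The Arthur multiplicity formula specializes to the classical sign condition $\varepsilon(1/2,\tau)=-1$, and for a level-one form of weight $2k-2$ this sign equals $(-1)^{k-1}$; hence $s_k(p,\mathrm{IIb})=\dim_{\C}S_{2k-2}(\SL(2,\Z))$ for $k$ even and $0$ for $k$ odd. Substituting \eqref{dim Sk(SL2Z)} at weight $2k-2$ and simplifying via the easy mod-$3$ identity $c_3(2k-2)+\hat{c}_3(2k-2)=\hat{c}_3(k)$ (for even $k$) yields \eqref{General formula for I and IIb eq2}; summing over even $k$ gives the generating function \eqref{Theorem of I and IIb eq2}.

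For $s_k(p,\mathrm{I})$: apply \eqref{dimskreleq} with $\Gamma_p=\Sp(4,\Z)$ and $C_p=K=\GSp(4,\Z_p)$. The parahoric dimension $d_{K,\Omega}$ equals $1$ exactly for the six unramified Iwahori-spherical types $\Omega\in\{\mathrm{I},\mathrm{IIb},\mathrm{IIIb},\mathrm{IVd},\mathrm{Vd},\mathrm{VId}\}$ and vanishes otherwise. Table~\ref{rep with Arthur type} shows that IIIb, Vd, and VId appear only in Arthur packets of type $\mathbf{(Q)}$ or $\mathbf{(B)}$, which are empty by Corollary~\ref{BQautopropcor2}, while IVd is one-dimensional and hence non-cuspidal (as already noted in Section~\ref{Section on Iwahori-spherical representations and Arthur packets}). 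Therefore $s_k(p,\Omega)=0$ for $\Omega\in\{\mathrm{IIIb},\mathrm{IVd},\mathrm{Vd},\mathrm{VId}\}$, and \eqref{dimskreleq} collapses to
\[
\dim_{\C}S_k(\Sp(4,\Z))=s_k(p,\mathrm{I})+s_k(p,\mathrm{IIb}).
\]
Substituting Igusa's classical generating function
\[
\sum_{k\ge 0}\dim_{\C}S_k(\Sp(4,\Z))\,t^k=\frac{1+t^{35}}{(1-t^4)(1-t^6)(1-t^{10})(1-t^{12})}-\frac{1}{(1-t^4)(1-t^6)}
\]
and the already-computed $s_k(p,\mathrm{IIb})$ gives \eqref{Theorem of I and IIb eq1} at once. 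Expanding both rational functions via partial fractions over the $60$th cyclotomic field and repackaging the residues in the periodic basis $f_4,f_6,c_{12},c_4,c^{\prime}_4,c_5,c_6,c^{\prime}_6,\hat{c}_6,c_3,\hat{c}_3$ of \eqref{new notations} then yields the closed form \eqref{General formula for I and IIb eq1}.

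The main obstacle is not conceptual but computational. The conceptual input — that at full level the only cuspidal Iwahori-spherical contributions come from type~I and type~IIb — is a clean consequence of Arthur's classification and the vanishing results of Section~\ref{Section on B and Q types}. What remains is (i) a one-time check that the sign convention in Saito-Kurokawa lifting matches the local packet description of type~IIb from \cite{Schmidt2020}, and (ii) the genuinely tedious step of converting the rational generating function for $\dim_{\C}S_k(\Sp(4,\Z))$ into the piecewise polynomial expression \eqref{General formula for I and IIb eq1}, which is indexed by $k$ modulo $\mathrm{lcm}(4,5,6,12)=60$. Step (ii) is mechanical but long; it accounts for essentially all the length of the final formula.
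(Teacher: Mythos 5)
Your proposal is correct and follows essentially the same route as the paper: both reduce \eqref{dimskreleq} at the hyperspecial level to $\dim_{\C}S_k(\Sp(4,\Z))=s_k(p,\mathrm{I})+s_k(p,\mathrm{IIb})$ via Corollary~\ref{BQautopropcor2}, identify the IIb contribution with the minus-space of level-one Saito--Kurokawa lifts, and subtract. The only cosmetic differences are that the paper invokes the classical Eichler--Zagier lifting for the sign condition where you invoke the Arthur multiplicity formula, and it takes Hashimoto's closed dimension formula for $\Sp(4,\Z)$ (with the $\delta_{k,3}$ correction) as input rather than extracting the quasi-polynomial from Igusa's generating function by partial fractions.
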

\begin{proof}\label{proof of Theorem of I and IIb}
By \cite[Table~3]{Schmidt200501}, the spherical representations in Table~\ref{rep with Arthur type} are I, IIb, IIIb, IVd, Vd and VId. Hence, by \eqref{dimskreleq}, and observing that one-dimensionals are irrelevant, we have
\begin{equation}\label{Sp4Zdimeq}
 \dim_{\C} S_k(\Sp(4,\Z))=s_k(p,{\rm I})+s_k(p,{\rm IIb})+s_k(p,{\rm IIIb})+s_k(p,{\rm Vd})+s_k(p,{\rm VId}).
\end{equation}
By Corollary~\ref{BQautopropcor2}, $s_k(p,{\rm IIIb})=s_k^{\textbf{(Q)}}(p,{\rm IIIb})=0$, and similarly, $s_k(p,{\rm Vd})=s_k(p,{\rm VId})=0$. Thus
\begin{equation}\label{Sp4Zdimeq2}
 \dim_{\C} S_k(\Sp(4,\Z))=s_k(p,{\rm I})+s_k(p,{\rm IIb}).
\end{equation}
The eigenforms constructed from the representations in $S_k(p,{\rm IIb})$ are precisely the full-level Saito-Kurokawa liftings. By \cite{EichlerZagier1985}, the space spanned by these is the image of an injective map
	\begin{equation*}
	S_{2k-2}^{-}(\SL(2,\Z)) \hookrightarrow S_k(\Sp(4,\Z)).
	\end{equation*}
Since the sign in the functional equation of the eigenforms in $S_{2k-2}(\SL(2,\Z))$ is $(-1)^{k-1}$, we get
\begin{equation}\label{m_IIb}
 s_k(p,{\rm IIb})=\dim_{\C} S_{2k-2}^{-}(\SL(2,\Z))=
 \begin{cases}
   \dim_{\C} S_{2k-2}(\SL(2,\Z))\ \qquad &\text{if } k \text{ is even},\\
   0&\text{if } k \text{ is odd}.
 \end{cases}
\end{equation}
Then \eqref{General formula for I and IIb eq2} and \eqref{Theorem of I and IIb eq2} follow from \eqref{dim Sk(SL2Z)}. 
Furthermore, using \eqref{Sp4Zdimeq2} and $\dim_{\C}S_{k}(\Sp(4,\Z))$ from \cite[Theorem~6-2]{Hashimoto1983} (which is also implicit in Sect.~3 of \cite{Igusa1964}), we obtain \eqref{General formula for I and IIb eq1}. Note that the dimension formula for $\Sp(4,\Z))$ in \cite[Theorem~6-2]{Hashimoto1983} does not work for $k=3$; we add the term $\delta_{k, 3}$ so that the formula works for all $k$.
\end{proof}
The following lemma is useful for finding the quantities $s_k(p,\Omega)$ for the representations of Saito-Kurokawa type and Yoshida type. This result can be derived from the work of Yamauchi \cite{Yamauchi1973}, and it is explicitly given in \cite[Theorem 2.2]{Martin2018}. Here, $S_{k}^{\rm new}(\Gamma_{0}^{(1)}(p))$ is the new subspace of weight $k$ elliptic cusp forms on the congruence subgroup $\Gamma_{0}^{(1)}(p)$ of $\SL(2,\Z)$. The plus and minus spaces $S_{k}^{\pm,\rm new}(\Gamma_{0}^{(1)}(p))$ are the space spanned by the eigenforms in $S_{k}^{\rm new}(\Gamma_{0}^{(1)}(p))$ which have the sign $\pm 1$ in the functional equation of their $L$-functions. 

\begin{lemma}\label{theorem for +,- new space}
	For any even integer $k\geq 2$ and $p\ge5$,
	\begin{equation*}
	\dim_{\C} S_{k}^{\pm,\rm new}(\Gamma_{0}^{(1)}(p))=\frac 12\dim_{\C} S_{k}^{\rm new}(\Gamma_{0}^{(1)}(p))\pm \frac12 \left(\frac12 hb-\delta_{k,2}\right),
	\end{equation*}
	where $h$ is the class number of $\Q(\sqrt{-p})$ and  
	\begin{equation}
	\label{b}
	b=\begin{cases} 
	1 & \text{if } p \equiv 1 \pmod 4\\
	2 & \text{if } p \equiv 7 \pmod 8\\
	4 & \text{if } p \equiv 3 \pmod 8,\\
	\end{cases} 
	\hspace{0.5in} \text{and}	\hspace{0.5in}
	\delta_{k,2}=
	\begin{cases}
	1 & \text{if } k=2\\
	0 & \text{if } k\neq 2.
	\end{cases}
	\end{equation}
	For $k>2$,
	\begin{align*}
	\dim_{\C} S_{k}^{\pm,\rm new}(\Gamma_{0}^{(1)}(2))&=\frac 12\dim_{\C} S_{k}^{\rm new}(\Gamma_{0}^{(1)}(2))\pm
	\begin{cases} 
	\frac12 & \text{if } k \equiv 0,2 \pmod 8,\\
	0 & \text{else}.
	\end{cases}\\
	\dim_{\C} S_{k}^{\pm,\rm new}(\Gamma_{0}^{(1)}(3))&=\frac 12\dim_{\C} S_{k}^{\rm new}(\Gamma_{0}^{(1)}(3))\pm
	\begin{cases} 
	 \frac12 & \text{if } k \equiv 0,2,6,8 \pmod {12},\\
	0 & \text{else}.
	\end{cases}
	\end{align*}
\end{lemma}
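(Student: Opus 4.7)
The plan is to reduce the statement to a trace computation for the Atkin--Lehner involution $W_p$ on $S_k^{\rm new}(\Gamma_0^{(1)}(p))$. For a newform $f$ of even weight $k$ on $\Gamma_0^{(1)}(p)$ (trivial character, $p$ prime), one has $W_pf=w_pf$ with $w_p=\pm1$, and the sign $\epsilon(f)$ in the functional equation of $L(s,f)$ satisfies $\epsilon(f)=-w_p$ in our normalization. Consequently
\[
\dim_{\C} S_k^{+,\rm new}(\Gamma_0^{(1)}(p))-\dim_{\C} S_k^{-,\rm new}(\Gamma_0^{(1)}(p))=-\trace\bigl(W_p\,|\,S_k^{\rm new}(\Gamma_0^{(1)}(p))\bigr),
\]
and combining this with $\dim_{\C}S_k^{+,\rm new}+\dim_{\C}S_k^{-,\rm new}=\dim_{\C}S_k^{\rm new}$ gives the two $\pm$-dimensions in terms of a single trace.

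Next I would show that this trace equals the trace on the full space $S_k(\Gamma_0^{(1)}(p))$. The oldspace $S_k^{\rm old}(\Gamma_0^{(1)}(p))$ decomposes as an orthogonal sum of two-dimensional blocks $\C f(z)\oplus\C f(pz)$, one for each normalized eigenform $f\in S_k(\SL(2,\Z))$, and on each such block $W_p$ is an involution with eigenvalues $+1$ and $-1$, hence trace zero. Therefore
\[
\trace\bigl(W_p\,|\,S_k^{\rm new}(\Gamma_0^{(1)}(p))\bigr)=\trace\bigl(W_p\,|\,S_k(\Gamma_0^{(1)}(p))\bigr),
\]
and the right-hand side is exactly what Yamauchi computes via an Eichler--Selberg type trace formula. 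For $p\geq5$ the elliptic contribution collapses to $-\tfrac12 hb$, where $h$ is the class number of $\Q(\sqrt{-p})$ and $b\in\{1,2,4\}$ records whether the order $\Z[\sqrt{-p}]$ coincides with the maximal order of $\Q(\sqrt{-p})$ (the three cases $p\equiv1\pmod4$, $p\equiv7\pmod8$, $p\equiv3\pmod8$). The parabolic contribution is trivial except at $k=2$, where the unique Eisenstein newform on $\Gamma_0(p)$ produces the correction $+\delta_{k,2}$. Assembling these yields
\[
\trace\bigl(W_p\,|\,S_k(\Gamma_0^{(1)}(p))\bigr)=-\tfrac12 hb+\delta_{k,2}\qquad(p\geq5),
\]
which together with the first paragraph gives the claimed formula.

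For $p=2$ and $p=3$ the class number argument is vacuous (the respective imaginary quadratic fields are not available in the same form), and I would instead proceed by direct computation. Using the dimension formulas for $S_k^{\rm new}(\Gamma_0^{(1)}(2))$ and $S_k^{\rm new}(\Gamma_0^{(1)}(3))$ together with the well-known periodic expressions (period $8$ and $12$ respectively) for $\trace(W_p\,|\,S_k(\Gamma_0^{(1)}(p)))$, one reads off the stated indicator functions on the residue classes $k\equiv0,2\pmod8$ and $k\equiv0,2,6,8\pmod{12}$. Alternatively, one simply cites \cite[Theorem~2.2]{Martin2018}, where the three cases are tabulated in the present form.

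The main obstacle is pinning down the sign conventions: the correspondence between $w_p$ and $\epsilon(f)$ depends on whether one writes the functional equation in the $s\leftrightarrow k-s$ or $s\leftrightarrow 1-s$ form, and on the normalization of $W_p$ via $\begin{bsmallmatrix}0&-1\\p&0\end{bsmallmatrix}$ versus $\begin{bsmallmatrix}0&-1\\p&\ 0\end{bsmallmatrix}/\sqrt p$. Once these conventions are fixed compatibly with Yamauchi's trace formula and with the sign definition in the statement, the remaining steps are routine bookkeeping.
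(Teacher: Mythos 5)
Your approach is the right one and is essentially the paper's: the paper gives no proof of this lemma, instead citing Yamauchi for the trace of the Fricke involution and Martin's Theorem~2.2 where the result is tabulated, and your reduction to $\trace\,(W_p\mid S_k^{\rm new}(\Gamma_0^{(1)}(p)))$ --- with the old space contributing trace zero on each two-dimensional block $\C f(z)\oplus\C f(pz)$ --- followed by the Eichler--Selberg evaluation is exactly the derivation those references carry out. One concrete warning about the ``bookkeeping'' you defer to the end: with the standard normalization the root number is $\epsilon(f)=i^k w_p=(-1)^{k/2}w_p$, not $-w_p$, and correspondingly the $t=0$ elliptic term in $\trace\,(W_p\mid S_k(\Gamma_0^{(1)}(p)))$ carries the factor $(-1)^{k/2-1}$ coming from $(\rho^{k-1}-\bar\rho^{k-1})/(\rho-\bar\rho)$ with $\rho=i\sqrt{p}$ (after normalizing $W_p$ to an involution); so neither of your two displayed intermediate identities holds for $k\equiv 0\pmod 4$, and it is only their product that is independent of $k\bmod 4$ and yields the stated correction $\tfrac12\bigl(\tfrac12 hb-\delta_{k,2}\bigr)$. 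For instance at $p=11$, $k=4$ both newforms have $w_{11}=+1=\epsilon$, so $\trace\,(W_{11}\mid S_4^{\rm new}(\Gamma_0^{(1)}(11)))=+2$ rather than your claimed $-\tfrac12 hb=-2$. The remaining ingredients are fine: the identification $\sum_{\mathcal O\supseteq\Z[\sqrt{-p}]}h(\mathcal O)=hb$ (the value of $b$ records how $2$ behaves in $\Q(\sqrt{-p})$, which governs the ratio $h(-4p)/h(-p)$, not merely whether $\Z[\sqrt{-p}]$ is maximal), the weight-$2$ Eisenstein correction $\delta_{k,2}$, and the direct verification for $p=2,3$.
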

 \noindent For any even integer $k\geq 2$, we have the following well-known results (for details see \cite[Sect.~3.5]{DiamondShurman2005}),
\begin{equation}
\label{dim Sk(Gamma0)}
\begin{split}
&\dim_{\C}  S_{k}(\Gamma_0^{(1)}(p))=\frac{k-1}{2^23}(p+1)+\frac{(-1)^{\frac k2}}{2^2} \left(1+\left(\frac{-1}{p}\right)\right)+\frac{c_3(k)+\hat{c}_3(k)}{3} \left(1+\left(\frac{-3}{p}\right)\right)-1+\delta_{k,2},\\
&\dim_{\C} S_{k}^{\text{new}}(\Gamma_{0}^{(1)}(p))=\dim_{\C} S_{k}(\Gamma_{0}^{(1)}(p))-2 \dim_{\C} S_{k}(\SL(2,\Z)). 
\end{split}
\end{equation}

\begin{theorem}\label{Theorem of Saito-Kurokawa type}
	Let $p\geq5$ be a prime. Suppose $h$ is the class number of $\Q(\sqrt{-p})$ and $b$ is defined as in \eqref{b}. 
 \begin{enumerate}
  \item For $k\geq2$,
   $$
    s_k(p,{\rm Vb})=\begin{cases}
                     \displaystyle\frac{2k-3}{2^33}(p-1)+\frac{1-\big(\frac{-1}{p}\big)}{2^3}-\frac{\hat{c}_3(k)\big(1-\big(\frac{-3}{p}\big)\big)}{2\cdot 3}-\frac{bh}{2^2}&\text{if $k$ is even},\\[2ex]
                     0&\text{if $k$ is odd}.
                    \end{cases}
   $$
   Equivalently,
   $$
    \sum\limits_{\substack{k \geq 2}}s_k(p,{\rm Vb})t^k=\left[\frac{(1+3t^2)(p-1)}{2^33\left(1-t^2\right)^2}+\frac{1-\big(\frac{-1}{p}\big)}{2^3 \left(1-t^2\right)}+\frac{1-\big(\frac{-3}{p}\big)}{2\cdot 3(1+t^2+t^4)}-\frac{bh}{2^2(1-t^2)}\right]t^2.
   $$
  \item For $k\geq2$,
   $$
    s_k^{\text{\bf{(P)}}}(p,{\rm VIb})=\begin{cases}
                     \displaystyle\frac{2k-3}{2^33}(p-1)+\frac{1-\big(\frac{-1}{p}\big)}{2^3}-\frac{\hat{c}_3(k)\big(1-\big(\frac{-3}{p}\big)\big)}{2\cdot 3}+\frac{bh}{2^2}-\delta_{k,2}&\text{if $k$ is even},\\[2ex]
                     0&\text{if $k$ is odd}.
                    \end{cases}
   $$
   Equivalently,
   $$
    \sum\limits_{\substack{k \geq 2}}s_k^{\text{\bf{(P)}}}(p,{\rm VIb})t^k=\left[\frac{(1+3t^2)(p-1)}{2^33\left(1-t^2\right)^2}+\frac{1-\big(\frac{-1}{p}\big)}{2^3 \left(1-t^2\right)}+\frac{1-\big(\frac{-3}{p}\big)}{2\cdot3(1+t^2+t^4)}+\frac{bh}{2^2(1-t^2)}-1\right]t^2.
   $$
  \item For $k\geq2$,
   $$
    s_k(p,{\rm VIc})=\begin{cases}
                     0&\text{if $k$ is even},\\[2ex]
                     \displaystyle\frac{2k-3}{2^33}(p-1)-\frac{1-\big(\frac{-1}{p}\big)}{2^3}-\frac{\hat{c}_3(k)\big(1-\big(\frac{-3}{p}\big)\big)}{2\cdot3}-\frac{bh}{2^2}&\text{if $k$ is odd}.
                    \end{cases}
   $$
   Equivalently,
   $$
    \sum\limits_{\substack{k \geq 2}}s_k(p,{\rm VIc})t^k=\left[\frac{(3+t^2)(p-1)}{2^33\left(1-t^2\right)^2}-\frac{1-\big(\frac{-1}{p}\big)}{2^3 \left(1-t^2\right)}+\frac{\big(1-\big(\frac{-3}{p}\big)\big)t^2}{2\cdot3(1+t^2+t^4)}-\frac{bh}{2^2(1-t^2)}\right]t^3.
   $$
 \end{enumerate}
\end{theorem}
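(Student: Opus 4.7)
The plan is to parametrize each of the three counts using the Saito--Kurokawa correspondence for $\PGSp(4)$ and then recognize each one as (part of) the dimension of a space of elliptic newforms on $\Gamma_0^{(1)}(p)$. Any representation $\pi\cong\otimes\pi_v$ lying in $S_k^{\mathbf{(P)}}(p,\mathrm{Vb})$, $S_k^{\mathbf{(P)}}(p,\mathrm{VIb})$, or $S_k^{\mathbf{(P)}}(p,\mathrm{VIc})$ belongs to a global Arthur packet of type $\mathbf{(P)}$, which by Arthur's classification (as specialized for $\GSp(4)$ in \cite{Schmidt2018}) is parametrized by a cuspidal automorphic representation $\tau$ of $\GL(2,\A_\Q)$. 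The conditions on $\pi$ pin $\tau$ down completely: it has trivial central character, is unramified at every prime $\ell\ne p$, has discrete series of weight $2k-2$ at infinity, and $\tau_p$ is Iwahori-spherical of Steinberg type, i.e.\ either $\tau_p\cong\St_{\GL(2)}$ or $\tau_p\cong\xi\St_{\GL(2)}$, where $\xi$ denotes the unramified quadratic character of $\Q_p^\times$. Equivalently, $\tau$ corresponds to a newform $f\in S_{2k-2}^{\mathrm{new}}(\Gamma_0^{(1)}(p))$, with Atkin--Lehner eigenvalue $w_p(f)=+1$ precisely when $\tau_p\cong\xi\St_{\GL(2)}$.

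The second step is to decide which local type $\Omega$ actually appears at $p$ in each packet. Using the explicit description of the local Saito--Kurokawa packets in \cite[Table~2]{Schmidt2020}, the Iwahori-spherical non-generic members are $\{\mathrm{Vb}\}$ when $\tau_p\cong\xi\St_{\GL(2)}$ and $\{\mathrm{VIb},\mathrm{VIc}\}$ when $\tau_p\cong\St_{\GL(2)}$. The selection inside each local packet is forced by the Arthur multiplicity condition in \eqref{Arthur's main result}, which requires that the product over all places of the characters on the component group $\mathcal{S}_\psi\cong\Z/2\Z$ equal the global character $\varepsilon_\psi=\varepsilon(1/2,\tau)$. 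Combining the archimedean choice (forced by our requirement that $\pi_\infty$ be the holomorphic discrete series of weight $(k,k)$) with the global sign $\varepsilon_f=(-1)^{k-1}w_p(f)$, the bookkeeping yields the dictionary: for even $k$, newforms with $\varepsilon_f=-1$ lift to type $\mathrm{Vb}$ and those with $\varepsilon_f=+1$ lift to type $\mathrm{VIb}$; for odd $k$, newforms with $\varepsilon_f=-1$ lift to type $\mathrm{VIc}$, while those with $\varepsilon_f=+1$ contribute no Iwahori-spherical lift. This simultaneously accounts for the vanishing of $s_k(p,\mathrm{Vb})$ and $s_k^{\mathbf{(P)}}(p,\mathrm{VIb})$ for odd $k$ and of $s_k(p,\mathrm{VIc})$ for even $k$.

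The dictionary immediately gives $s_k(p,\mathrm{Vb})=\dim_{\C}S_{2k-2}^{-,\mathrm{new}}(\Gamma_0^{(1)}(p))$ and $s_k^{\mathbf{(P)}}(p,\mathrm{VIb})=\dim_{\C}S_{2k-2}^{+,\mathrm{new}}(\Gamma_0^{(1)}(p))$ for even $k\ge 2$, and $s_k(p,\mathrm{VIc})=\dim_{\C}S_{2k-2}^{-,\mathrm{new}}(\Gamma_0^{(1)}(p))$ for odd $k\ge 3$. Feeding in Lemma~\ref{theorem for +,- new space} and the explicit dimension formulas \eqref{dim Sk(Gamma0)}, \eqref{dim Sk(SL2Z)}, and using the elementary identity $(c_3+\hat{c}_3)(2k-2)=\hat{c}_3(k)$ to simplify the contributions mod $3$, produces the claimed closed expressions. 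The three generating series are then obtained by splitting each formula into its polynomial, quadratic-residue, and periodic pieces and summing them as geometric series in $t^2$ (respectively $t^3$ in the odd case). The main obstacle is the sign bookkeeping of the second step: tracing, across all completions of $\Q$, exactly which element of $\mathcal{S}_\psi$ is picked out at each place by $\pi_v$ requires the archimedean and $p$-adic Saito--Kurokawa packet structure from \cite{Schmidt2020}, and without this comparison one can neither distinguish $\mathrm{VIb}$ from $\mathrm{VIc}$ in the Steinberg case nor explain why only half of $S_{2k-2}^{\mathrm{new}}(\Gamma_0^{(1)}(p))$ contributes when $k$ is odd.
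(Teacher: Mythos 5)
Your proposal is correct and follows essentially the same route as the paper: both reduce the three counts to $\dim_{\C} S_{2k-2}^{\pm,\mathrm{new}}(\Gamma_0^{(1)}(p))$ via the identifications in \eqref{relations of Saito-Kurokawa type} and then apply Lemma~\ref{theorem for +,- new space} together with \eqref{dim Sk(SL2Z)} and \eqref{dim Sk(Gamma0)}. The only difference is that the paper simply cites \cite{Schmidt2007} for these identifications, whereas you re-derive them from the Arthur multiplicity formula and the local Saito--Kurokawa packet structure; your sign bookkeeping (including the non-Iwahori-spherical supercuspidal member absorbing the $\varepsilon_f=+1$ newforms when $k$ is odd) agrees with the paper's dictionary.
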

\begin{proof}
	It follows from \cite[Sect.~1]{Schmidt2007} that
	\begin{equation}\label{relations of Saito-Kurokawa type}
	\begin{split}
	s_k(p,{\rm Vb})=\begin{cases}
  \dim_{\C} S_{2k-2}^{-,\rm new}(\Gamma_{0}^{(1)}(p)) \qquad &\text{if } k \text{ is even},\\
   0&\text{if } k \text{ is odd}.
 \end{cases}\\
 	s_k^{\textbf{(P)}}(p,{\rm VIb})= \begin{cases}
 \dim_{\C} S_{2k-2}^{+,\rm new}(\Gamma_{0}^{(1)}(p))\ \qquad &\text{if } k \text{ is even},\\
 0&\text{if } k \text{ is odd}.
 \end{cases}\\
 	s_k(p,{\rm VIc})=\begin{cases}
 0&\text{if } k \text{ is even},\\
 \dim_{\C} S_{2k-2}^{-,\rm new}(\Gamma_{0}^{(1)}(p))\ \qquad &\text{if } k \text{ is odd}.
 \end{cases}\\
	\end{split}
	\end{equation}  
Then, using Lemma~\ref{theorem for +,- new space}, \eqref{dim Sk(SL2Z)} and \eqref{dim Sk(Gamma0)}, the formulas for $s_k^{\rm \mathbf{(P)}}(p,{\rm VIb})$, $s_k(p,{\rm VIc})$ and $s_k(p,{\rm Vb})$ follow from straightforward calculations.
\end{proof}
\noindent For the next theorem, we define
\begin{align}
C(p)=&\frac{p-1}{2^33}+\frac{1-\big(\frac{-1}{p}\big)}{2^3}+\frac{1-\big(\frac{-3}{p}\big)}{2\cdot 3}-\frac{1}{2}
\end{align}
for any prime $p\geq5$.

\begin{theorem}\label{relations of Yoshida type}
	Let $p\geq5$ be a prime. Suppose $h$ is the class number of $\Q(\sqrt{-p})$ and $b$ is defined as in \eqref{b}. 
For $k\geq 2$,
\begin{align*}
s_k^{\rm \mathbf{(Y)}}(p,{\rm VIb})&=C(p)\cdot \left(\frac{2k-3}{2^23}(p-1)+(-1)^k\frac{1-\big(\frac{-1}{p}\big)}{2^2}-\frac{\hat{c}_3(k)\big(1-\big(\frac{-3}{p}\big)\big)}3-\delta_{k,2}\right)\\
&+\frac{2-bh}{2^2}\delta_{k,2}+(-1)^k\frac{b^2h^2-2bh}{2^3}.
\end{align*}
 Equivalently,
 \begin{align*}
&\sum_{k\geq 2}s_k^{\rm \mathbf{(Y)}}(p,{\rm VIb})t^k\\
=&\left[C(p)\left(\frac{1+t}{2^23(1-t)^2}(p-1)+\frac{1-\big(\frac{-1}{p}\big)}{2^2(1+t)}+\frac{(1+t)\big(1-\big(\frac{-3}{p}\big)\big)}{3(1+t+t^2)}-1\right)+\frac{2-bh}{2^2}
+\frac{b^2h^2-2bh}{2^3(1+t)}\right]t^2. 
 \end{align*}
\end{theorem}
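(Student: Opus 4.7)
The plan is to realize each $\pi \in S_k^{\mathbf{(Y)}}(p,{\rm VIb})$ as arising from an unordered pair of elliptic cuspidal automorphic representations and to count such pairs via Arthur's multiplicity formula, following the paramodular-style strategy of \cite[Sect.~1]{Schmidt2007} already used in Theorem~\ref{Theorem of Saito-Kurokawa type}. A Yoshida-type global Arthur parameter has the form $\psi=\phi_1\boxplus\phi_2$ with $\phi_1,\phi_2$ the distinct cuspidal automorphic parameters of $\GL(2,\A_\Q)$ attached to elliptic newforms $\mu_1,\mu_2$. For $\pi$ to lie in $S_k^{\mathbf{(Y)}}(p,{\rm VIb})$, the archimedean packet must contain the lowest weight module of $K$-type $(k,k)$, which forces the weights of $\mu_1,\mu_2$ to be $2k-2$ and $2$; both of $\mu_{1,p},\mu_{2,p}$ must be Steinberg, so that the local Yoshida packet at $p$ is $\{{\rm VIa},{\rm VIb}\}$; and each $\mu_i$ must be unramified at every finite $v\neq p$. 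The pairs to be counted therefore live in $S_{2k-2}^{\rm new}(\Gamma_0^{(1)}(p))\times S_2^{\rm new}(\Gamma_0^{(1)}(p))$.

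Arthur's formula \eqref{Arthur's main result} retains only those pairs for which the character $\langle\cdot,\pi\rangle$ on $\mathcal{S}_\psi\cong\Z/2\Z$ matches $\varepsilon_\psi$. A local computation of this character at $\infty$ and at $p$, together with Tate's product formula for the global $\varepsilon$-factor, translates the condition into a compatibility between the Atkin--Lehner eigenvalues of $\mu_1,\mu_2$ at $p$. The upshot is that the functional-equation signs of $\mu_1,\mu_2$ must agree when $k$ is even and disagree when $k$ is odd, the parity flip reflecting the factor $(-1)^{w/2}$ that relates the sign of the functional equation of a weight-$w$ newform at level $p$ to its Atkin--Lehner eigenvalue. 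Writing $A_\pm=\dim_{\C} S_2^{\pm,{\rm new}}(\Gamma_0^{(1)}(p))$ and $B_\pm=\dim_{\C} S_{2k-2}^{\pm,{\rm new}}(\Gamma_0^{(1)}(p))$, this yields $s_k^{\mathbf{(Y)}}(p,{\rm VIb})=A_+B_++A_-B_-$ for $k$ even and $A_+B_-+A_-B_+$ for $k$ odd.

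The remainder is algebra. By Lemma~\ref{theorem for +,- new space}, $A_\pm=\alpha\pm e$ and $B_\pm=\beta\pm f$ with $\alpha=\tfrac{1}{2}\dim_{\C} S_2^{\rm new}(\Gamma_0^{(1)}(p))$, $\beta=\tfrac{1}{2}\dim_{\C} S_{2k-2}^{\rm new}(\Gamma_0^{(1)}(p))$, $e=\tfrac{hb}{4}-\tfrac{1}{2}$, and $f=\tfrac{hb}{4}-\tfrac{\delta_{k,2}}{2}$; both parity cases collapse to the uniform identity $s_k^{\mathbf{(Y)}}(p,{\rm VIb})=2\alpha\beta+(-1)^k\cdot 2ef$. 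A direct check from \eqref{dim Sk(Gamma0)} at weight $2$ gives $\alpha=C(p)$, and applying \eqref{dim Sk(Gamma0)} at weight $2k-2$ and then subtracting $2\dim_{\C} S_{2k-2}(\SL(2,\Z))$ from \eqref{dim Sk(SL2Z)} identifies $2\beta$ with the bracketed $k$-dependent factor multiplying $C(p)$ in the claim. Since $2ef=\tfrac{b^2h^2-2bh}{8}+\tfrac{2-bh}{4}\delta_{k,2}$ and $(-1)^k=1$ whenever $\delta_{k,2}\neq 0$, the cross term produces exactly the two stated correction terms. The equivalent generating-series identity follows by summing the standard rational generating functions for $(2k-3)t^k$, $\hat c_3(k)t^k$, $(-1)^kt^k$, and $\delta_{k,2}t^k$.

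The main obstacle is the multiplicity-formula sign bookkeeping — specifically, pinning down the parity-dependent pairing rule and handling $k=2$, where $\pi_\infty$ is only a holomorphic limit of discrete series and the archimedean Arthur packet is larger than in the discrete-series case. A separate analysis of the endoscopic identities at infinity is needed there; the net effect is that the $k$-even pairing rule continues to apply, with the $\delta_{k,2}$ correction absorbing the extra contribution from the degenerate archimedean component. Once this accounting is in place, the remaining manipulations are an exercise in applying the known dimension formulas.
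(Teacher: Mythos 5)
Your proposal is correct and follows essentially the same route as the paper: the paper likewise reduces the count to pairs in $S_{2k-2}^{\rm new}(\Gamma_0^{(1)}(p))\times S_2^{\rm new}(\Gamma_0^{(1)}(p))$ with exactly your parity-dependent sign-pairing rule (its equation \eqref{general formula for Yoshida type}), obtained by requiring $\pi_{f,p}$ and $\pi_{g,p}$ to be the \emph{same} unramified twist of the Steinberg representation so that the local component at $p$ is Iwahori-spherical of type VIb, and then applies Lemma~\ref{theorem for +,- new space} together with \eqref{dim Sk(SL2Z)} and \eqref{dim Sk(Gamma0)} exactly as in your algebraic step. One small correction of attribution: for a Yoshida parameter $\varepsilon_\psi$ is trivial and the multiplicity formula only demands an even number of non-generic local components (automatic here, since $\infty$ and $p$ are the two such places), so the parity flip does not come from $\langle\cdot,\pi\rangle=\varepsilon_\psi$ but from translating the local matching condition at $p$ into global functional-equation signs via the archimedean factor $(-1)^{w/2}$ --- a mechanism you do correctly identify in your second paragraph.
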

\begin{proof}
	In order to compute $s_k^{\textbf{(Y)}}(p,{\rm VIb})$ we look at the Yoshida lifting. This lifting associates a holomorphic Siegel modular form $F\in S_{k}^{\rm new}(\Gamma_0(p))$ to two eigenforms $f \in S_{2k-2}^{\rm new}({\rm \Gamma_0^{(1)}}(p))$ and $g \in S_{2}^{\rm new}({\rm \Gamma_0^{(1)}}(p))$; see \cite[Proposition~3.1]{SahaSchmidt2013}. Let $\pi_f=\otimes_{v \le \infty} \pi_{f,v}$ and $\pi_g=\otimes_{v \le \infty} \pi_{g,v}$ be the automorphic representations of $\PGL(2, \A_\Q)$ attached to $f$ and $g$, respectively. Both local representations $\pi_{f,\, p}$ and $\pi_{g,\, p}$ are either the Steinberg representation $\rm St_{\GL(2)}$ or its non-trivial unramified twist $\xi\mathrm{St}_{\GL(2)}$; here $\xi$ is the unique non-trivial unramified quadratic character of $\Q_p^\times$. In order to produce a Yoshida lifting $\pi_F=\otimes_{p \le \infty} \pi_{F,\, p}$ with Iwahori-fixed vectors at $p$, we must have $\pi_{f,\, p}=\pi_{g,\, p}$ by \cite[(16)]{SahaSchmidt2013}. In this case $\pi_{F,\, p}$ is of type $\rm VIb$. More precisely, if $\pi_{f,\, p}=\pi_{g,\, p}=\St_{\GL(2)}$ (and hence the local root numbers at $p$ are $-1$), then $\pi_{F,p}=\tau(S,\nu^{-1/2})$, and if $\pi_{f,\, p}=\pi_{g,\, p}=\xi\St_{\GL(2)}$ (and hence the local root numbers at $p$ are $+1$), then $\pi_{F,p}=\tau(S,\nu^{-1/2}\xi)$. Since the archimedean signs are $(-1)^{k-1}$ for eigenforms in $S_{2k-2}^{\rm new}({\rm \Gamma_0^{(1)}}(p))$, and $-1$ for eigenforms in $S_2^{\rm new}({\rm \Gamma_0^{(1)}}(p))$, it follows that
	\begin{equation}\label{general formula for Yoshida type}
	s_k^{\rm \textbf{(Y)}}(p,{\rm VIb})=
	\begin{cases*}
	\!\begin{aligned}
	& \dim_{\C} S_{2k-2}^{+,\rm new}({\rm \Gamma_0^{(1)}}(p))\times \dim_{\C} S_2^{+,\rm new}({\rm \Gamma_0^{(1)}}(p))\\
	&+\dim_{\C} S_{2k-2}^{-,\rm new}({\rm \Gamma_0^{(1)}}(p))\times \dim_{\C} S_2^{-,\rm new}({\rm \Gamma_0^{(1)}}(p))
	\end{aligned} &\text{if} $k$ \text{ is even},\\[0.1 cm]
	\!\begin{aligned}
	&\dim_{\C} S_{2k-2}^{-,\rm new}({\rm \Gamma_0^{(1)}}(p))\times \dim_{\C} S_2^{+,\rm new}({\rm \Gamma_0^{(1)}}(p))\\
	&+ \dim_{\C} S_{2k-2}^{+,\rm new}({\rm \Gamma_0^{(1)}}(p))\times \dim_{\C} S_2^{-,\rm new}({\rm \Gamma_0^{(1)}}(p))
	\end{aligned}  &\text{if} $k$ \text{ is odd}.
	\end{cases*}
	\end{equation}
	Then, using \eqref{dim Sk(SL2Z)}, \eqref{dim Sk(Gamma0)}, \eqref{relations of Saito-Kurokawa type}, \eqref{general formula for Yoshida type} and Lemma~\ref{theorem for +,- new space},  we obtain the general formula and hence the generating function for  $s_k^{\rm \mathbf{(Y)}}(p,{\rm VIb})$. 
\end{proof}

\subsection{The representation types \texorpdfstring{${\rm IIa, IIIa+VIa/b, IVa, Va}$}{PDFstring}}\label{countGsec}
In this section, we compute the generating functions $\sum_{k\geq 3}s_k(p,\Omega)t^k$ for the representation types $\Omega\in\{{\rm IIa, IIIa+VIa/b, IVa, Va}\}$. Note that the Arthur packets contributing to such  $S_k(p,\Omega)$ are necessarily of type $\textbf{(G)}$.

We will use dimension formulas for the spaces $S_k(\Gamma)$, where $\Gamma$ is one of $\mathrm{K}(p)$, $\Gamma_0(p)$, $\Gamma_0'(p)$ or $B(p)$. Many authors have contributed such dimension formulas. We summarize the sources of the formulas we will use in Table~\ref{historytable}, without claim to historical completeness.

\begin{table}
	\caption{History of dimension formulas for congruence subgroups of Iwahori-type. Earlier references appear above later references.}
	\label{historytable}
$$
\renewcommand{\arraystretch}{1.2}
\renewcommand{\arraycolsep}{.5cm}
 \begin{array}{cclll}
 	\toprule
   &&p=2&p=3&p\geq5\\
   \midrule
    \mathrm{K}(p)&k=1&\text{\cite[Thm.~6.1]{Ibukiyama2007}}&\text{\cite[Thm.~6.1]{Ibukiyama2007}}&\text{\cite[Thm.~6.1]{Ibukiyama2007}}\\
   \cmidrule{2-5}
    &k=2&\text{\cite[Sect.~1]{Ibukiyama1984}}&\text{\cite[Sect.~5.3]{Ibukiyama2018}}&\text{\cite{PoorYuen2015}}\ (p < 600)\\
   \cmidrule{2-5}
    &k=3&\text{\cite[Thm.~2.1]{Ibukiyama2007}}&\text{\cite[Thm.~2.1]{Ibukiyama2007}}&\text{\cite[Thm.~2.1]{Ibukiyama2007}}\\
   \cmidrule{2-5}
    &k=4&\text{\cite[Sect.~2.4]{Ibukiyama2007}}&\text{\cite[Sect.~2.4]{Ibukiyama2007}}&\text{\cite[Sect.~2.4]{Ibukiyama2007}}\\
   \cmidrule{2-5}
    &k\geq5&\text{\cite[Thm.~4]{Ibukiyama1985}}&\text{\cite[Thm.~4]{Ibukiyama1985}}&\text{\cite[Thm.~4]{Ibukiyama1985}}\\
   \midrule
    \Gamma_0(p)&k=1&\text{\cite[Thm.~6.1]{Ibukiyama2007}}&\text{\cite[Thm.~6.1]{Ibukiyama2007}}&\text{\cite[Thm.~6.1]{Ibukiyama2007}}\\
   \cmidrule{2-5}
    &k=2&\text{\cite[Sect.~1]{Ibukiyama1984}}&\text{\cite[Sect.~5.3]{Ibukiyama2018}}\\
   \cmidrule{2-5}
    &k=3&\text{\cite[Thm.~2.2]{Ibukiyama2007}}&\text{\cite[Thm.~2.2]{Ibukiyama2007}}&\text{\cite[Thm.~2.2]{Ibukiyama2007}}\\
   \cmidrule{2-5}
    &k=4&\text{\cite[Cor.~4.12]{Tsushima1997}}&\text{\cite[Cor.~4.12]{Tsushima1997}}&\text{\cite[Cor.~4.12]{Tsushima1997}}\\
    &&\text{\cite[Sect.~2.4]{Ibukiyama2007}}&\text{\cite[Sect.~2.4]{Ibukiyama2007}}&\text{\cite[Sect.~2.4]{Ibukiyama2007}}\\
   \cmidrule{2-5}
    &k\geq5&\text{\cite[Cor.~4.12]{Tsushima1997}}&\text{\cite[Thm.~7-1]{Hashimoto1983}}&\text{\cite[Thm.~7-1]{Hashimoto1983}}\\
    &&\text{\cite[Thm.~7.4]{Wakatsuki2012}}&\text{\cite[Cor.~4.12]{Tsushima1997}}&\text{\cite[Cor.~4.12]{Tsushima1997}}\\
    &&&\text{\cite[Thm.~7.4]{Wakatsuki2012}}&\text{\cite[Thm.~7.4]{Wakatsuki2012}}\\
   \midrule
    \Kl(p)&k=1&\text{\cite[Thm.~6.1]{Ibukiyama2007}}&\text{\cite[Thm.~6.1]{Ibukiyama2007}}&\text{\cite[Thm.~6.1]{Ibukiyama2007}}\\
   \cmidrule{2-5}
    &k=2&\text{\cite[Sect.~1]{Ibukiyama1984}}&\text{\cite[Sect.~5.3]{Ibukiyama2018}}\\
   \cmidrule{2-5}
    &k=3&\text{\cite[Thm.~2.4]{Ibukiyama2007}}&\text{\cite[Thm.~2.4]{Ibukiyama2007}}&\text{\cite[Thm.~2.4]{Ibukiyama2007}}\\
   \cmidrule{2-5}
    &k=4&\text{\cite[Sect.~2.4]{Ibukiyama2007}}&\text{\cite[Sect.~2.4]{Ibukiyama2007}}&\text{\cite[Sect.~2.4]{Ibukiyama2007}}\\
   \cmidrule{2-5}
    &k\geq5&\text{\cite[Thm.~A.1]{Wakatsuki2013}}&\text{\cite[Thm.~A.1]{Wakatsuki2013}}&\text{\cite[Thm.~3.3]{HashimotoIbukiyama1985}}\\
    &&&&\text{\cite[Thm.~A.1]{Wakatsuki2013}}\\
   \midrule
    \B(p)&k=1&\text{\cite[Thm.~6.1]{Ibukiyama2007}}&\text{\cite[Thm.~6.1]{Ibukiyama2007}}&\text{\cite[Thm.~6.1]{Ibukiyama2007}}\\
   \cmidrule{2-5}
    &k=2&\text{\cite[Sect.~1]{Ibukiyama1984}}&\text{Prop.~\ref{k2prop}}\\
   \cmidrule{2-5}
    &k=3&\text{\cite[Thm.~2.3]{Ibukiyama2007}}&\text{\cite[Thm.~2.3]{Ibukiyama2007}}&\text{\cite[Thm.~2.3]{Ibukiyama2007}}\\
   \cmidrule{2-5}
    &k=4&\text{\cite[Sect.~2.4]{Ibukiyama2007}}&\text{\cite[Sect.~2.4]{Ibukiyama2007}}&\text{\cite[Sect.~2.4]{Ibukiyama2007}}\\
   \cmidrule{2-5}
    &k\geq5&\text{\cite[Thm.~A.2]{Wakatsuki2013}}&\text{\cite[Thm.~A.2]{Wakatsuki2013}}&\text{\cite[Thm.~3.2]{HashimotoIbukiyama1985}}\\
    &&&&\text{\cite[Thm.~A.2]{Wakatsuki2013}}\\
    \bottomrule
 \end{array}
$$
\end{table}

In the following theorems, let $b$ and $h$ be as in Lemma~\ref{theorem for +,- new space}. We will also use the quantities defined in \eqref{new notations} and \eqref{legendre}.

\begin{theorem}\label{Theorem of IIa1}
	Let $p\geq5$ be a prime and $k\ge 3$. Then	
	\begin{equation*}
	\begin{split}
		s_k(p,{\rm IIa})=
	&\frac{p^2-1}{2^73^35}(k-2) (k-1) (2 k-3)+\frac{-4 \big(\frac{-3}{p}\big)-3 \big(\frac{-1}{p}\big)+p-3}{2^33}+\frac{bh}{2^2}-\delta_{k, 3}\\
	+&\frac{16 (p+3) \big(\frac{-3}{p}\big)+9 (p+4) \big(\frac{-1}{p}\big)-84 p+119}{2^73^3}(2k-3)\\
	+&\frac{\left(16 \big(\frac{-3}{p}\big)-p+12\right) \left(\big(\frac{-1}{p}\big)-1\right)+3 (p-49)}{2^73}(-1)^k+\frac{(-1)^k(2k-3)}{2^33}\\
	+&\frac{\left(\big(\frac{-3}{p}\big)+1\right) \left(9 \big(\frac{-1}{p}\big)+p-6\right)-4 (p-8)}{2^33^3}\hat{c}_3(k)+\frac{(-1)^k\hat{c}_3(k)}{2\cdot 3}\\
	-&\frac{c_4(k)}{2^2} \begin{cases}
	0&p\equiv 1,7\pmod 8,\\
	1&p\equiv 3,5\pmod 8,
	\end{cases}
	-\frac{c_5(k)}{5}\begin{cases}
	1&p=5,\\
	0&p\equiv 1,4\pmod {5},\\
	2&p\equiv 2,3\pmod {5}.
	\end{cases}
	\end{split}
\end{equation*}
Equivalently,
	\begin{equation*}
	\begin{split}
\displaystyle	&\sum_{k\geq 3}s_k(p,{\rm IIa})t^k\\
		=&\left[\frac{\left(p^2-1\right)(1+t)}{2^63^25 (1-t)^4}-\frac{(p-1)(5 + 13 t + 17 t^2 + 12 t^3 + 12 t^4)}{2^53^2 (1-t^2)(1-t^3)}+\frac{t^{7}}{(1 - t^2) (1-t^6)}+\frac{bh}{2^2(1-t)}-1\right.\\
		+&\left.\left(\frac{p(1+t)}{2^33^2 (1-t)(1-t^3)}+\frac{-3 + t + 4 t^3}{2^33(1-t)(1-t^3)}\right)\left(\left(\frac{-3}{p}\right)-1\right)\right.\\
		+&\left.\left(\frac{p}{2^53 (1-t) (1-t^2)}+\frac{-4 + 2 t - t^2 + 3 t^3 + 3 t^4}{2^33(1-t^2)(1-t^3)}\right)\left(\left(\frac{-1}{p}\right)-1\right)
		-\frac{\left(\big(\frac{-3}{p}\big)-1\right)\left(\big(\frac{-1}{p}\big)-1\right)}{2^33 (1+t)(1+t+t^2)}
		\right]t^3\\
			+&\frac{t^3}{2^2 (1+t)(1+t^2)} \begin{cases}
		0&p\equiv 1,7\pmod 8,\\
		1&p\equiv 3,5\pmod 8,
		\end{cases}
		+\frac{(1+t)t^3}{5 (1 + t + t^2 + t^3 + t^4)}\begin{cases}
		1&p=5,\\
		0&p\equiv 1,4\pmod {5},\\
		2&p\equiv 2,3\pmod {5}.
		\end{cases}
	\end{split}
\end{equation*}
\end{theorem}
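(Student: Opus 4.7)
The plan is to extract $s_k(p,{\rm IIa})$ from the linear system \eqref{dimskreleq} applied to the five congruence subgroups $\Gamma_p \in \{\Sp(4,\Z),\mathrm{K}(p),\Gamma_0(p),\Kl(p),\B(p)\}$ listed in \eqref{classical congruence subgroups}. By \eqref{no. of cup form1} together with Corollary~\ref{BQautopropcor2}, only the Arthur types \textbf{(G)}, \textbf{(P)}, \textbf{(Y)} contribute, and by the table \eqref{mk that we compute} the unknown \textbf{(G)}-parts are exactly five quantities: $s_k(p,{\rm I})$, $s_k(p,{\rm IIa})$, $s_k(p,{\rm IIIa+VIa/b})$, $s_k(p,{\rm IVa})$, $s_k(p,{\rm Va})$. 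Since $s_k(p,{\rm I})$ is already determined in Theorem~\ref{Theorem of I and IIb} using the $\Sp(4,\Z)$ dimension formula, the remaining four unknowns are coupled to the four dimensions $\dim_{\C}S_k(\mathrm{K}(p))$, $\dim_{\C}S_k(\Gamma_0(p))$, $\dim_{\C}S_k(\Kl(p))$, $\dim_{\C}S_k(\B(p))$ through a $4\times 4$ linear system whose matrix entries are the universal Iwahori-invariant dimensions $d_{C_p,\Omega}$ from \cite[Table~3]{Schmidt200501}.

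First I would write \eqref{dimskreleq} explicitly for each of the four parahoric subgroups, separate the contributions according to the Arthur-type decomposition \eqref{no. of cup form1}, and move to the right-hand side the contributions from types \textbf{(P)} and \textbf{(Y)}, namely $s_k(p,{\rm IIb})$, $s_k(p,{\rm Vb})$, $s_k^{\textbf{(P)}}(p,{\rm VIb})$, $s_k^{\textbf{(Y)}}(p,{\rm VIb})$ and $s_k(p,{\rm VIc})$, all of which are already known from Theorems~\ref{Theorem of I and IIb}, \ref{Theorem of Saito-Kurokawa type}, and~\ref{relations of Yoshida type}. This yields a modified linear system whose left-hand sides are the four parahoric dimensions minus explicit lift contributions, and whose unknowns are the four generic \textbf{(G)}-quantities.

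Next I would verify that the $4\times 4$ coefficient matrix (built from the entries $d_{C_p,\Omega}$ for $\Omega\in\{{\rm IIa},{\rm IIIa+VIa/b},{\rm IVa},{\rm Va}\}$ and $C_p\in\{\mathrm{K}(\p),\mathrm{Si}(\p),\mathrm{Kl}(\p),I\}$) is invertible; this is asserted in the paragraph after \eqref{IIIa/VI. VIa, VIb II}, and is a direct check on a matrix of small integers independent of $p$ and $k$. Inverting this matrix and reading off the IIa-row isolates $s_k(p,{\rm IIa})$ as an explicit integer linear combination of the four parahoric dimensions together with correction terms coming from the lifts. Into this formula one substitutes the dimension formulas for $S_k(\mathrm{K}(p))$, $S_k(\Gamma_0(p))$, $S_k(\Kl(p))$, $S_k(\B(p))$ in the ranges $k=3$, $k=4$, $k\ge 5$ from the references collected in Table~\ref{historytable} (note the $-\delta_{k,3}$ correction accounts for the non-uniformity of \cite[Thm.~6-2]{Hashimoto1983} at $k=3$, exactly as in the proof of Theorem~\ref{Theorem of I and IIb}).

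The main obstacle is the subsequent algebraic simplification: the raw expression coming out of the linear solve is a sum of several dimension formulas, each involving class-number terms $bh$, Legendre-like symbols $\big(\tfrac{-1}{p}\big)$ and $\big(\tfrac{-3}{p}\big)$, and periodic correction terms $f_4,f_6,c_3,\hat c_3,c_4,c_5,c_6,\hat c_6,c_{12},c_4'$ from \eqref{new notations}. The class-number contributions must miraculously collapse into the single clean $bh/4$ term displayed in the statement, and the many periodic terms must consolidate into the short combination shown (only $c_4$, $c_5$, $\hat c_3$, $(-1)^k$, $(-1)^k\hat c_3$, and $(-1)^k(2k-3)$ survive). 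Once the closed form for $s_k(p,{\rm IIa})$ is in hand, the generating-function identity follows by summing each elementary piece: polynomial-in-$k$ terms give rational functions with poles at $t=1$, the alternating $(-1)^k$ terms produce poles at $t=-1$, and each periodic $c_n(k)$ or $\hat c_n(k)$ transforms into the corresponding cyclotomic denominator $(1-t^n)$, after which a common factor of $t^3$ reflects the restriction $k\ge 3$.
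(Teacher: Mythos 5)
Your proposal matches the paper's argument: the authors write \eqref{dimskreleq} for the four parahoric subgroups as the system \eqref{general formula for dim of spaces}, move the already-computed lift quantities (Theorems~\ref{Theorem of I and IIb}, \ref{Theorem of Saito-Kurokawa type}, \ref{relations of Yoshida type}) and $s_k(p,{\rm I})$ to the right-hand side, solve for the four generic unknowns (in fact $s_k(p,{\rm IIa})$ falls out of the ${\rm K}(p)$ equation alone), and substitute the dimension formulas from Table~\ref{historytable}. This is essentially identical to what you describe, down to the invertibility remark and the $\delta_{k,3}$ bookkeeping.
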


\begin{theorem}\label{Theorem of IIIa1}
	Let $p\geq5$ be a prime and $k\ge 3$. Then	
	\begin{align*}
&s_k(p,{\rm IIIa+VIa/b})\\
=\ &\frac{(p-1)(p^2+p+2)}{2^83^35}(k-2) (k-1) (2 k-3)+\frac{3\big(\frac{-1}{p}\big)-p-2}{2^43}-\frac{bh}{2^3}-\frac{b^2h^2-2bh}{2^4}(-1)^k-\delta_{k, 3}\\
+&\frac{7 (p-1)(p+3)(-1)^k}{2^83^2}(k-2) (k-1)-\frac{(p-1) \left(-32 \big(\frac{-3}{p}\big)-27 \big(\frac{-1}{p}\big)+12 p-97\right)}{2^83^3}(2k-3)\\ 
-&\frac{\left(32 \big(\frac{-3}{p}\big)-5 p-3\right) \left(9 \big(\frac{-1}{p}\big)-17\right)-40 (p+7)}{2^83^3}(-1)^k -\frac{p-1}{2^33} (-1)^k (2 k-3) +\frac{1-\big(\frac{-3}{p}\big)}{2\cdot 3} c_3(k)\\
+&
\frac{(p+5) \left(1-\big(\frac{-3}{p}\big)\right)}{2^23^3}\hat{c}_3(k)+\frac{1-\big(\frac{-3}{p}\big)}{2^23}\hat{c}_3(k)(-1)^k  
+\frac{(p+1) \big(\frac{-1}{p}\big)+p-3}{2^63} f_4(k)+\frac{1-\big(\frac{-1}{p}\big)}{2^3} c^{\prime}_4(k)\\
+&\frac{(p+1) \big(\frac{-3}{p}\big)+p-3}{2^33^3} f_6(k)+\frac{2\left(\big(\frac{-3}{p}\big)-2\right)}{3^3}c_6(k)+\frac{5 \big(\frac{-3}{p}\big)-13}{2^23^3} \hat{c}_6(k)+\frac{2\left(\big(\frac{-3}{p}\big)+1\right)}{3^3}c^{\prime}_6(k)\\
+&\frac{\left(\big(\frac{-3}{p}\big)+1\right)\left(\big(\frac{-1}{p}\big)+1\right)-4}{2^33} c_{12}(k)
-\frac{c_4(k)}{2^2}\begin{cases}
	1&p\equiv 7\pmod{8},\\
	0&\text{otherwise},
\end{cases}-\frac{c_5(k)}{5}\begin{cases}
	1&p=5,\\
	0&p\equiv 1\hspace{0.05in}\pmod{5},\\
	1&p\equiv 2, 3 \hspace{-0.1in}\pmod{5},\\
	2&p\equiv 4\hspace{0.05in}\pmod{5}.
\end{cases}
\end{align*}

\vspace*{-0.2in}	
Equivalently,
\begin{align*}
&\sum_{k\geq 3}s_k(p,{\rm IIIa+VIa/b})t^k\\
=&\left[\frac{(p-1)(p^2+p+2) (1+t)}{2^73^25 (1-t)^4}-\frac{(p-1)(p+3)(13 + 2 t + 19 t^2 - 2 t^4)}{2^73^2 (1+t)\left(1-t^2\right)^2}+\frac{(p-1)N(t)}{2^43^2 \left(1 + t^2\right)^2 \left(1 - t^6\right)^2}-1\right.\\
-&\left. \left(\frac{(p+1)C_{-3,1}(t)}{2^33^2 (1-t)^2\left(1 + t^2 + t^4\right)^2}+\frac{ C_{-3,2}(t)}{2^23^2 (1 - t) (1 - t^2 + t^4) (1 - t^6)}\right) \left(\left(\frac{-3}{p}\right)-1\right)\right.\\
-&\left. \left(\frac{(p+1)C_{-1,1}(t)}{2^63 (1 - t) (1 + t^2) (1 - t^4)}+\frac{C_{-1,2}(t)}{2^53 (1 - t) (1 - t^4) (1 - t^2 + t^4)}\right) \left(\left(\frac{-1}{p}\right)-1\right)\right.\\
+&\left.\frac{t (-2 - 2 t + t^3)}{2^33 (1+t) (1 - t^2 + t^4)}\left(\left(\frac{-3}{p}\right)-1\right)\left(\left(\frac{-1}{p}\right)-1\right)+\frac{b^2 h^2}{2^4(1+t)}-\frac{b h}{2^2 (1-t^2)}
\right]t^3\\
+&\frac{t^3}{2^2(1 + t) (1+ t^2)}\begin{cases}
1&p\equiv 7\pmod{8},\\
0&\text{otherwise},
\end{cases}+\frac{t^3 (1 + t)}{5(1 + t + t^2 + t^3 + t^4)}\begin{cases}
1&p=5,\\
0&p\equiv 1\ \pmod{5},\\
1&p\equiv 2, 3 \hspace{-0.1in}\pmod{5},\\
2&p\equiv 4\ \pmod{5},
\end{cases}
\end{align*}
where
\begin{align*}
N(t)=&34 - 6 t + 133 t^2 - 35 t^3 + 264 t^4 - 88 t^5 + 344 t^6 - 120 t^7 + 
 342 t^8\\ -& 58 t^9 + 224 t^{10} + 86 t^{12} + 14 t^{13} + 13 t^{14} + 5 t^{15},\\
C_{-3,1}(t)=&-2 + 2 t - 6 t^2 + 6 t^3 - 5 t^4 + 3 t^5 - 5 t^6 + 3 t^7 - 3 t^8 + t^9,\\
C_{-3,2}(t)=&14 - 8 t - 10 t^2 + 6 t^3 - 5 t^4 - 2 t^5 + 20 t^6 - 6 t^7 - 16 t^8 + 
4 t^9 + 7 t^{10},\\
C_{-1,1}(t)=&-3 - 4 t + 4 t^2 - 8 t^3 + t^4 - 4 t^5 + 2 t^6,\\
C_{-1,2}(t)=&12 + 10 t - 43 t^2 + 40 t^4 - 36 t^6 + 10 t^7 + 13 t^8.
\end{align*}
\end{theorem}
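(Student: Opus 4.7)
The plan is to apply the master equation \eqref{dimskreleq} for the five congruence subgroups $\Sp(4,\Z)$, $\mathrm{K}(p)$, $\Gamma_0(p)$, $\Kl(p)$, and $\B(p)$ listed in \eqref{classical congruence subgroups}, and then to solve the resulting linear system for the five unknowns $s_k(p,\mathrm{I})$, $s_k(p,\mathrm{IIa})$, $s_k(p,\mathrm{IIIa}+\mathrm{VIa/b})$, $s_k(p,\mathrm{IVa})$, $s_k(p,\mathrm{Va})$ displayed in \eqref{mk that we compute}. By \eqref{no. of cup form1} and Corollary~\ref{BQautopropcor2}, the contributions from the lifted types $\mathrm{IIb}, \mathrm{Vb}, \mathrm{VIb}, \mathrm{VIc}$ have already been determined in Theorems~\ref{Theorem of I and IIb}, \ref{Theorem of Saito-Kurokawa type} and \ref{relations of Yoshida type}, so for each $\Gamma_p$ I move those contributions to the left-hand side and obtain an equation whose right-hand side is a linear combination of only the five generic unknowns, with integer coefficients $d_{C_p,\Omega}$ read off from \cite[Table~3]{Schmidt200501}.

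The next step is to invert the resulting $5\times 5$ coefficient matrix. Since, as observed after \eqref{IIIa/VI. VIa, VIb I}, the columns for $\mathrm{IIIa}$ and $\mathrm{VIa}+\mathrm{VIb}$ are identical, the collapsed matrix in the five genuine unknowns is what needs to be used; a direct computation verifies that its determinant is nonzero. The unique row of the inverse corresponding to the type $\mathrm{IIIa}+\mathrm{VIa/b}$ then expresses $s_k(p,\mathrm{IIIa}+\mathrm{VIa/b})$ as an explicit $\Z$-linear combination of the five corrected dimensions.

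The third step is to substitute explicit dimension formulas for the spaces $S_k(\Sp(4,\Z))$, $S_k(\mathrm{K}(p))$, $S_k(\Gamma_0(p))$, $S_k(\Kl(p))$ and $S_k(\B(p))$, all at level $p\geq 5$. For $k\geq 5$ these are available from \cite[Thm.~6-2]{Hashimoto1983} and \cite[Thm.~4]{Ibukiyama1985}, from \cite[Thm.~7-1]{Hashimoto1983} and \cite[Cor.~4.12]{Tsushima1997}, and from \cite[Thm.~3.2]{HashimotoIbukiyama1985}, \cite[Thm.~3.3]{HashimotoIbukiyama1985} and \cite[Thms.~A.1, A.2]{Wakatsuki2013}; for $k=3,4$ they are supplied by \cite[Thms.~2.1--2.4 and Sect.~2.4]{Ibukiyama2007}. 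Adding the correction terms $\delta_{k,3}$ that compensate for the failure of the $\Sp(4,\Z)$ formula at $k=3$ (as noted in the proof of Theorem~\ref{Theorem of I and IIb}) and combining with the already-computed formulas for $s_k(p,\mathrm{IIb})$, $s_k(p,\mathrm{Vb})$, $s_k^{\mathbf{(P)}}(p,\mathrm{VIb})$, $s_k^{\mathbf{(Y)}}(p,\mathrm{VIb})$, $s_k(p,\mathrm{VIc})$ yields, after substantial simplification, a single closed form in $k$, $p$, the class number $h$ of $\Q(\sqrt{-p})$, and the integer $b$ from \eqref{b}; the $bh$ and $b^2h^2$ terms in the stated formula originate from the Yoshida lift contribution in Theorem~\ref{relations of Yoshida type}.

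The final step is the derivation of the generating series, obtained term by term from standard geometric sums $\sum_{k\geq 3} c_n(k) t^k$ and $\sum_{k\geq 3} \hat c_n(k) t^k$ for each periodic coefficient defined in \eqref{new notations}, plus the polynomial identities for $\sum (k-2)(k-1)(2k-3)t^k$ and $\sum (k-2)(k-1)(-1)^k t^k$. The main obstacle I expect is not conceptual but one of bookkeeping: collating the many case-splits on $k \bmod 12$ and $p \bmod \{4,5,8,12\}$ coming from the various dimension formulas, grouping them under the unified notation of \eqref{new notations}--\eqref{legendre}, and reconciling the result with the splitting between $s_k^{\mathbf{(G)}}(p,\mathrm{IIIa})+s_k^{\mathbf{(G)}}(p,\mathrm{VIa/b})$ and $s_k^{\mathbf{(Y)}}(p,\mathrm{VIb})$. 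A numerical check against tables of $\dim_\C S_k(\Gamma_p)$ for small primes $p$ and a range of $k$ is essentially indispensable to guarantee that no arithmetic error has crept into the resulting identity.
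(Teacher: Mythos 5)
Your proposal is correct and follows essentially the same route as the paper: the authors likewise set up the linear system \eqref{general formula for dim of spaces} from \eqref{dimskreleq} and \cite[Table~3]{Schmidt200501}, substitute the already-known lift quantities from Theorems~\ref{Theorem of I and IIb}, \ref{Theorem of Saito-Kurokawa type} and \ref{relations of Yoshida type}, solve for the generic unknowns (obtaining in particular $s_k(p,{\rm IIIa+VIa/b})=\tfrac12\dim_{\C}S_k(\Gamma_0(p))-\tfrac12\dim_{\C}S_k({\rm K}(p))-s_k(p,{\rm I})-s_k(p,{\rm IIb})-\tfrac12 s_k^{\rm\mathbf{(P)}}(p,{\rm VIb})-\tfrac12 s_k^{\rm\mathbf{(Y)}}(p,{\rm VIb})+\tfrac12 s_k(p,{\rm VIc})$), and then insert the published dimension formulas from Table~\ref{historytable}. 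The only cosmetic difference is that you phrase the elimination as inverting the $5\times5$ matrix while the paper writes out the solved equations explicitly, and one small imprecision: the $bh$ terms enter through the $\pm$-splittings of Lemma~\ref{theorem for +,- new space} in the Saito-Kurokawa contributions as well, not only through the Yoshida lift (which is the source of the $b^2h^2$ term).
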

\begin{theorem}\label{Theorem of IVa1}
	Let $p\geq5$ be a prime and $k\ge 3$. Then	
	\begin{align*}
	&s_k(p,{\rm IVa})\\
	=&\frac{(p-1)\left(p^3-1\right)}{2^73^35}(k-2) (k-1) (2 k-3)+\frac{7(p-1)^2(-1)^k}{2^73^2}(k-2) (k-1)+\delta_{k, 3}\\ 
	+&\frac{(p-1) \left(16 \big(\frac{-3}{p}\big)+9 \big(\frac{-1}{p}\big)-25\right)}{2^73^3}(2k-3)+\frac{\left(\big(\frac{-3}{p}\big)-1\right) \left(9 \big(\frac{-1}{p}\big)+p-10\right)}{2^33^3}\hat{c}_3(k)\\
	+&\frac{\left(16 \big(\frac{-3}{p}\big)-p-15\right) \left(9 \big(\frac{-1}{p}\big)-25\right)-16 (p+31)}{2^73^3}(-1)^k
	+\frac{\big(\frac{-1}{p}\big)-1}{2^53} (p-1) f_4(k)\\
	+&\frac{\big(\frac{-3}{p}\big)-1}{2^23^3} (p-1) f_6(k)-\frac{4\left(\big(\frac{-3}{p}\big)-2\right)}{3^3} c_6(k)+\frac{2\left(\big(\frac{-3}{p}\big)+1\right)}{3^3} \hat{c}_6(k)-\frac{4\left(\big(\frac{-3}{p}\big)+1\right)}{3^3} c^{\prime}_6(k)\\
	+&\frac{{\left(\big(\frac{-3}{p}\big)-1\right)\left(\big(\frac{-1}{p}\big)-1\right)}}{2^23} c_{12}(k)
	+\frac{c_4(k)}{2}\begin{cases}
	1&p\equiv 7\pmod{8},\\
	0&\text{otherwise},
	\end{cases}+\frac{c_5(k)}{5}\begin{cases}
	1&p=5,\\
	0&p\equiv 1\ \pmod{5},\\
	2&p\equiv 2, 3\hspace{-0.1in}\pmod{5},\\
	4&p\equiv 4\ \pmod{5}.
	\end{cases}
	\end{align*}
	
\vspace*{-0.2in}	
\noindent Equivalently,
\begin{align*}
&\sum_{k\geq 3}s_k(p,{\rm IVa})t^k\\
=&\left[\frac{(p-1)\left(p^3-1\right)(t+1)}{2^63^25 (1-t)^4}-\frac{7(p-1)^2}{2^63^2\left(1+t\right)^3}+1\right.\\
+&\left. \left(\frac{(p-1)(1+t) (3 - 5 t + 10 t^2 - 13 t^3 + 10 t^4 - 5 t^5 + 3 t^6)}{2^33^2(1 - t)^2 (1 + t^2 + t^4)^2}+\frac{2}{3^2(1+t^3)}\right)  \left(\left(\frac{-3}{p}\right)-1\right)\right.\\
+&\left.\frac{(p-1)\left(3 - 2 t^2 + 3 t^4\right)\left(\big(\frac{-1}{p}\big)-1\right)}{2^53(1 - t) (1 + t^2) (1 - t^4)}-\frac{(3 + 6 t + 7 t^2 + 6 t^3 + 3 t^4)\left(\big(\frac{-3}{p}\right)-1\big)\left(\big(\frac{-1}{p}\big)-1\right)}{2^33(1 + t) (1 + t + t^2) (1 - t^2+t^4)}\right]t^3\\
-&\frac{t^3}{2(1 + t) (1+ t^2)}\begin{cases}
1&p\equiv 7\pmod{8},\\
0&\text{otherwise},
\end{cases}-\frac{t^3(1 + t)}{5(1 + t + t^2 + t^3 + t^4)}\begin{cases}
1&p=5,\\
0&p\equiv 1\ \pmod{5},\\
2&p\equiv 2, 3 \hspace{-0.1in}\pmod{5},\\
4&p\equiv 4\ \pmod{5}.
\end{cases}
\end{align*}
\end{theorem}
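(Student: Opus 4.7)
The plan is to invert the $5\times 5$ linear system that \eqref{dimskreleq} provides for the five congruence subgroups $\Gamma_p\in\{\Sp(4,\Z),\mathrm{K}(p),\Gamma_0(p),\mathrm{Kl}(p),\B(p)\}$ of level $p$. For each such $\Gamma_p$, with $C_p$ the corresponding compact-open subgroup from \eqref{congruence subgroups for GSp4F},
\[
 \dim_{\C} S_k(\Gamma_p)=\sum_\Omega s_k(p,\Omega)\,d_{C_p,\Omega},
\]
where the coefficients $d_{C_p,\Omega}$ are read off from \cite[Table~3]{Schmidt200501}. By Corollary~\ref{BQautopropcor2} together with the elimination of non-unitary and one-dimensional types, $s_k(p,\Omega)=0$ for $\Omega\in\{\mathrm{IIIb},\mathrm{IVb},\mathrm{IVc},\mathrm{IVd},\mathrm{Vd},\mathrm{VId}\}$. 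The lifting quantities $s_k(p,\mathrm{IIb})$, $s_k(p,\mathrm{Vb})$, $s_k(p,\mathrm{VIb})=s_k^{\mathbf{(P)}}(p,\mathrm{VIb})+s_k^{\mathbf{(Y)}}(p,\mathrm{VIb})$ and $s_k(p,\mathrm{VIc})$ are already supplied in closed form by Theorems~\ref{Theorem of I and IIb}, \ref{Theorem of Saito-Kurokawa type}, and \ref{relations of Yoshida type}.

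Substituting these known values into the five equations leaves a linear system in the five remaining unknowns $s_k(p,\mathrm{I})$, $s_k(p,\mathrm{IIa})$, $s_k(p,\mathrm{IIIa+VIa/b})$, $s_k(p,\mathrm{IVa})$, $s_k(p,\mathrm{Va})$; as noted in the paragraph following \eqref{mk that we compute}, the $5\times 5$ coefficient matrix turns out to be invertible, so one can isolate $s_k(p,\mathrm{IVa})$ by Gaussian elimination. The scalar-valued dimensions $\dim_{\C} S_k(\Gamma_p)$ feeding the system are assembled from the references in Table~\ref{historytable}: for $k\ge 5$ and $p\ge 5$ one uses \cite[Thm.~4]{Ibukiyama1985} for $\mathrm{K}(p)$, \cite[Cor.~4.12]{Tsushima1997} or \cite[Thm.~7.4]{Wakatsuki2012} for $\Gamma_0(p)$, and \cite[Thm.~3.2, 3.3]{HashimotoIbukiyama1985} or \cite[Thm.~A.1, A.2]{Wakatsuki2013} for $\mathrm{Kl}(p)$ and $\B(p)$. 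For the two exceptional weights $k=3,4$ one invokes the targeted formulas in \cite[Thms.~2.1--2.4, Sect.~2.4]{Ibukiyama2007}; the $+\delta_{k,3}$ correction in the stated formula is the propagation through the linear system of the $-\delta_{k,3}$ correction that already appears in Theorem~\ref{Theorem of I and IIb}.

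The main obstacle is entirely computational rather than conceptual: one must carry the dependence on the symbols $\big(\frac{-1}{p}\big)$ and $\big(\frac{-3}{p}\big)$, on the class-number datum $bh$, and on the periodic functions $c_n(k)$, $\hat c_n(k)$, $c'_n(k)$, $f_n(k)$ defined in \eqref{new notations} through lengthy additions, subtractions, and collecting of like terms, and then sort the output into the particular presentation given in the theorem. In practice this is best done with a computer algebra system. Passing to the generating function is then a routine application of $\sum_{k\ge 0}(-1)^k t^k=1/(1+t)$, of the fact that each $c_n(k)$ contributes a rational function with denominator $1-t^n$, and of the standard expansion of $\sum_{k\ge 0}(k-2)(k-1)(2k-3)t^k$ as a rational function with denominator $(1-t)^4$; collecting the case distinctions for $p\bmod 8$ and $p\bmod 5$, which originate in the elliptic-fixed-point contributions to the input dimension formulas, produces the final generating function. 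A sanity check is to verify that the closed-form formula and the generating series agree at $k=3$ and $k=4$, where independent dimension formulas were used as inputs.
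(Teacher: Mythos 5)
Your proposal matches the paper's own proof: the authors use \eqref{dimskreleq} with \cite[Table~3]{Schmidt200501} to obtain the system \eqref{general formula for dim of spaces}, substitute the known lift quantities from Theorems~\ref{Theorem of I and IIb}, \ref{Theorem of Saito-Kurokawa type} and \ref{relations of Yoshida type}, and solve to get $s_k(p,{\rm IVa})=\dim_{\C} S_k(\B(p))+\dim_{\C} S_k({\rm K}(p))-\dim_{\C} S_k(\Gamma_0(p))-2\dim_{\C} S_k(\Kl(p))+2s_k(p,{\rm I})+2s_k(p,{\rm IIb})$, feeding in the dimension formulas from Table~\ref{historytable} exactly as you describe. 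The only quibble is that the $+\delta_{k,3}$ arises from the combined $k=3$ corrections of all the inputs (including $+2\delta_{k,3}$ from $2s_k(p,{\rm I})$, whose correction is $+\delta_{k,3}$, not $-\delta_{k,3}$), but this does not affect the validity of your approach.
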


\begin{theorem}\label{Theorem of Va1}
	Let $p\geq5$ be a prime and $k\ge 3$. Then	
	\begin{align*}
	&s_k(p,{\rm Va})\\
	=&\frac{p(p-1)^2}{2^83^35}(k-2) (k-1) (2 k-3)+\frac{1-\big(\frac{-1}{p}\big)}{2^4}-\frac{bh}{2^3}+\frac{b^2h^2-2bh}{2^4}(-1)^k\\
	-&\frac{7(p-1)^2(-1)^k}{2^83^2}(k-2) (k-1)+\frac{p-1}{2^43}(2k-3)(-1)^k-\frac{1-\big(\frac{-3}{p}\big)}{2^23}\hat{c}_3(k)(-1)^k\\ 
	-&\frac{(p-1) \left(-32 \big(\frac{-3}{p}\big)-27 \big(\frac{-1}{p}\big)+12 p-97\right)}{2^83^3}(2k-3)-\frac{(p-4) \left(\big(\frac{-3}{p}\big)-1\right)}{2^23^3}\hat{c}_3(k)\\
	+&\frac{\left(32 \big(\frac{-3}{p}\big)-5 p-3\right) \left(-9 \big(\frac{-1}{p}\big)+1\right)-40 (p+7)}{2^83^3}(-1)^k-\frac{(p-1) \left(\big(\frac{-1}{p}\big)-1\right)}{2^63} f_4(k)\\
	-&\frac{(p-1) \left(\big(\frac{-3}{p}\big)-1\right)}{2^33^3} f_6(k)
	+\frac{2\left(2 \big(\frac{-3}{p}\big)-1\right)}{3^3} c_6(k)+\frac{\big(\frac{-3}{p}\big)+1}{3^3} \hat{c}_6(k)-\frac{2\left(\big(\frac{-3}{p}\big)+1\right)}{3^3} c^{\prime}_6(k)\\
	-&\frac{\left(\big(\frac{-3}{p}\big)-1\right)\left(\big(\frac{-1}{p}\big)-1\right)}{2^33} c_{12}(k)-\frac{c_4(k)}{2^2}\begin{cases}
	1&p\equiv 7\pmod{8},\\
	0&\text{otherwise},
	\end{cases}+\frac{c_5(k)}{5}\begin{cases}
	1&p\equiv 2, 3\hspace{-0.1in}\pmod{5},\\
	-2&p\equiv 4\ \pmod{5},\\
	0&\text{otherwise}.
	\end{cases}
	\end{align*}

\vspace*{-0.1in}		
\noindent Equivalently,
\begin{align*}
&\sum_{k\geq 3}s_k(p,{\rm Va})t^k\\
=&\left[\frac{p\left(p-1\right)^2(1+t)}{2^73^25 (1-t)^4}+\frac{(p-1)^2(1 - 30 t - 5 t^2 + 2 t^4)}{2^73^2(1 - t)^2 (1 + t)^3}+\frac{(p-1)(5-t^2)t}{2^33\left(1 - t^2\right)^2}-\frac{b^2 h^2}{2^4 (1+t)}-\frac{b h t}{2^2 \left(1-t^2\right)}\right.\\
+&\left.\left(\frac{(p-1)(2 - 2 t + 9 t^2 - 7 t^3 + 7 t^4 - 5 t^5 + 3 t^6 - t^7)t^2}{2^33^2 (1-t)^2 \left(1 + t^2 + t^4\right)^2}+\frac{-2 + t^2 + 3 t^3 + 2 t^4}{2\cdot 3^2(1 + t) (1 + t^2 + t^4)}\right)\left(\left(\frac{-3}{p}\right)-1\right)\right.\\
-&\left.  \left(\frac{(p-1)(1 - 4 t - 4 t^2 - 8 t^3 + 5 t^4 - 4 t^5 + 2 t^6)}{2^63 (1 - t) (1 + t^2) (1 - t^4)}+\frac{1+3t}{2^5 (1 - t^2)}\right)\left(\left(\frac{-1}{p}\right)-1\right)\right.\\
+&\left.\frac{2 + 2 t + t^4}{2^33 (1+t) \left(1-t^2+t^4\right)}\left(\left(\frac{-3}{p}\right)-1\right)\left(\left(\frac{-1}{p}\right)-1\right)
\right]t^3\\
+&\frac{t^3}{2^2(1 + t) (1+ t^2)}\begin{cases}
1&p\equiv 7\pmod{8},\\
0&\text{otherwise},
\end{cases}-\frac{t^3(1 + t)}{5(1 + t + t^2 + t^3 + t^4)}\begin{cases}
1&p\equiv 2, 3\hspace{-0.1in}\pmod{5},\\
-2&p\equiv 4\ \pmod{5},\\
0&\text{otherwise}.
\end{cases}
\end{align*}
\end{theorem}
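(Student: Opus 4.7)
The plan is to execute the linear-algebra strategy described at the end of Section~\ref{Section on Arthur packets}. For each of the five congruence subgroups $\Gamma_p \in \{\Sp(4,\Z), \mathrm{K}(p), \Gamma_0(p), \Kl(p), B(p)\}$, equation \eqref{dimskreleq} gives a linear relation among the quantities $s_k(p,\Omega)$ with coefficients $d_{C_p,\Omega}$ read from \cite[Table~3]{Schmidt200501}. Corollary~\ref{BQautopropcor2} together with the unitarity observations after \eqref{dimskreleq} eliminates the types IIIb, IVb, IVc, IVd, Vd, VId, and Theorems~\ref{Theorem of I and IIb}, \ref{Theorem of Saito-Kurokawa type} and \ref{relations of Yoshida type} supply closed formulas for $s_k(p,\mathrm{IIb})$, $s_k(p,\mathrm{Vb})$, $s_k(p,\mathrm{VIc})$ and $s_k(p,\mathrm{VIb}) = s_k^{\mathbf{(P)}}(p,\mathrm{VIb}) + s_k^{\mathbf{(Y)}}(p,\mathrm{VIb})$. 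Moving those known quantities to the dimension side leaves a $5 \times 5$ linear system in the unknowns $s_k(p,\mathrm{I})$, $s_k(p,\mathrm{IIa})$, $s_k(p,\mathrm{IIIa+VIa/b})$, $s_k(p,\mathrm{IVa})$, $s_k(p,\mathrm{Va})$; by the non-singularity of $(d_{C_p,\Omega})$ observed in the paper I invert this system to express $s_k(p,\mathrm{Va})$ as a definite $\Z$-linear combination of $\dim_\C S_k(\Gamma_p)$ for the five $\Gamma_p$'s and of the lift contributions.

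Next I would substitute the dimension formulas collected in Table~\ref{historytable}: for $k \geq 5$ those of Hashimoto--Ibukiyama \cite{HashimotoIbukiyama1985} and Wakatsuki \cite{Wakatsuki2012,Wakatsuki2013}, for $k = 4$ those of Tsushima \cite{Tsushima1997} and \cite[Sect.~2.4]{Ibukiyama2007}, and for $k = 3$ the formulas of \cite[Thms.~2.1--2.4]{Ibukiyama2007}. The $\delta_{k,3}$ term appearing in the Va formula encodes the $k=3$ correction (parallel to the one in Theorem~\ref{Theorem of I and IIb}) required for the $k \geq 5$ formula to extend down to $k=3$. Grouping like terms against the basis functions of \eqref{new notations} and the symbols $\bigl(\tfrac{-1}{p}\bigr)$, $\bigl(\tfrac{-3}{p}\bigr)$, $h$, $b$ from \eqref{legendre} and Lemma~\ref{theorem for +,- new space} should then recover the closed form stated in the theorem, and summing the resulting finite-order geometric-type series yields the generating-function identity.

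The main obstacle is the sheer volume of algebraic bookkeeping. Each of the four level-$p$ dimension formulas contains on the order of a dozen case-split terms (depending on $k$ modulo $4, 5, 6, 12$ and $p$ modulo $3, 4, 5, 8$, plus class-number contributions), and the inversion produces roughly a hundred monomials that must be reorganized into the normal form of the statement. In practice I would carry out the elimination and simplification in a computer algebra system, and then verify the output by (a) comparing small-$p$ and small-$k$ values against tabulated $\dim_\C S_k(\Gamma_p)$, (b) checking the consistency equation \eqref{no. of cup form1} against Theorems~\ref{Theorem of IIa1}--\ref{Theorem of IVa1} and the lift theorems, and (c) confirming that the leading coefficient $p(p-1)^2/(2^8 3^3 5)$ of $(k-2)(k-1)(2k-3)$ matches the local Plancherel quantity $m_{\mathrm{Va}}$ computed in the final section of the paper.
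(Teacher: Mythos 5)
Your proposal matches the paper's own proof: the authors use exactly the linear system \eqref{general formula for dim of spaces} obtained from \eqref{dimskreleq} and \cite[Table~3]{Schmidt200501}, solve for $s_k(p,{\rm Va})$ as $\dim_{\C} S_k(\Kl(p))-\tfrac12\dim_{\C} S_k(\Gamma_0(p))-\tfrac32\dim_{\C} S_k({\rm K}(p))$ plus the previously determined lift quantities from Theorems~\ref{Theorem of I and IIb}, \ref{Theorem of Saito-Kurokawa type} and \ref{relations of Yoshida type}, and then substitute the published dimension formulas from Table~\ref{historytable}. (Only a trivial slip: the stated Va formula has no $\delta_{k,3}$ term; the $k=3$ corrections enter via $s_k(p,{\rm I})$ and the low-weight dimension formulas.)
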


\begin{proof}[Proof of Theorems~\ref{Theorem of IIa1}-\ref{Theorem of Va1}]
By \eqref{dimskreleq} and \cite[Table 3]{Schmidt200501}, we obtain
\begin{equation}
\label{general formula for dim of spaces}
	\begin{split}
	&\dim_{\C} S_k({\rm K}(p))=2s_k(p,{\rm I})+s_k(p,{\rm IIa})+s_k(p,{\rm IIb})+s_k(p,{\rm Vb})+s_k(p,{\rm VIc}),\\
	&\dim_{\C} S_k(\Gamma_0(p))=4s_k(p,{\rm I})+s_k(p,{\rm IIa})+3s_k(p,{\rm IIb})+2s_k(p,{\rm IIIa})+s_k(p,{\rm Vb})\\
	&\hspace{52ex}+s_k(p,{\rm VIa})+s_k(p,{\rm VIb}),\\
	&\dim_{\C} S_k(\Kl(p))=4s_k(p,{\rm I})+2s_k(p,{\rm IIa})+2s_k(p,{\rm IIb})+s_k(p,{\rm IIIa})+s_k(p,{\rm Va})\\
	&\hspace{40.5ex}+s_k(p,{\rm Vb})+s_k(p,{\rm VIa})+s_k(p,{\rm VIc}),\\
	&\dim_{\C} S_k(\B(p))=8s_k(p,{\rm I})+4s_k(p,{\rm IIa})+4s_k(p,{\rm IIb})+4s_k(p,{\rm IIIa})+s_k(p,{\rm IVa})\\
	&\hspace{13ex}+2s_k(p,{\rm Va})+2s_k(p,{\rm Vb})+3s_k(p,{\rm VIa})+s_k(p,{\rm VIb})+s_k(p,{\rm VIc}).
	\end{split}
\end{equation}
Let us replace $s_k(p,{\rm VIb})$ by $s_k^{\mathbf{(G)}}(p,{\rm VIb})+s_k^{\mathbf{(Y)}}(p,{\rm VIb})+s_k^{\mathbf{(P)}}(p,{\rm VIb})$. Observing \eqref{IIIa/VI. VIa, VIb I} and \eqref{IIIa/VI. VIa, VIb II}, we get the system of equations
\begin{align}
&s_k(p,{\rm IIa})=\dim_{\C} S_k({\rm K}(p))-2s_k(p,{\rm I})-s_k(p,{\rm IIb})-s_k(p,{\rm Vb})-s_k(p,{\rm VIc}),\\
&s_k(p,{\rm IIIa+VIa/b})=\frac 12 \dim_{\C} S_k(\Gamma_0(p))-\frac 12 \dim_{\C} S_k({\rm K}(p))-s_k(p,{\rm I})-s_k(p,{\rm IIb})\nonumber\\
&\hspace{34ex}-\frac 12 s_k^{\rm \mathbf{(P)}}(p,{\rm VIb})-\frac 12 s_k^{\rm \mathbf{(Y)}}(p,{\rm VIb})+ \frac 12 s_k(p,{\rm VIc}),\\
&s_k(p,{\rm Va})=\dim_{\C} S_k(\Kl(p))-\frac 12 \dim_{\C} S_k(\Gamma_0(p))-\frac 32\dim_{\C} S_k({\rm K}(p))+s_k(p,{\rm I})\nonumber\\
&\hspace{5ex}+s_k(p,{\rm IIb})+s_k(p,{\rm Vb})+\frac 12 s_k^{\rm \mathbf{(P)}}(p,{\rm VIb})+\frac 12 s_k^{\rm \mathbf{(Y)}}(p,{\rm VIb})+\frac 12 s_k(p,{\rm VIc}),\\
&s_k(p,{\rm IVa})=\dim_{\C} S_k(\B(p))+\dim_{\C} S_k({\rm K}(p))-\dim_{\C} S_k(\Gamma_0(p))-2\dim_{\C} S_k(\Kl(p))\nonumber\\
&\hspace{58ex}+2s_k(p,{\rm I})+2s_k(p,{\rm IIb}).
\end{align}
We derive the explicit formulas of $s_k(p,\Omega)$ for $\Omega\in\{{\rm IIa, IIIa+VIa/b, IVa, Va}\}$ using Theorems~\ref{Theorem of I and IIb}, \ref{Theorem of Saito-Kurokawa type},~\ref{relations of Yoshida type} and the global dimension formulas of $S_k({\rm K}(p))$, $S_k(\Gamma_0(p))$, $S_k(\Kl(p))$ and $S_k(\B(p))$. The generating series $\sum_{k\ge 3} s_k(p,\Omega) t^k$ follow in a straightforward way. 
\end{proof}
Note that the quantities $s_k(p,{\rm IVa})$ and $s_k(p,{\rm Va})$ are the same as the quantities $n(D_{k-1,k-2}^{\rm Hol},{\rm St},p)$ and $n(D_{k-1,k-2}^{\rm Hol},{\rm Va},p)$ in \cite{Wakatsuki2013}, respectively.

\subsection*{For \texorpdfstring{$p=2,3$}{}}

\noindent Here we give rational expression for the generating function of $s_k(p,\Omega)$ for $p=2,3$. We compute them separately as follows because the formulas in Lemma~\ref{theorem for +,- new space} are different for these two primes.
\begin{align*}
&\sum\limits_{\substack{k \geq 2}}s_k(2,{\rm Vb})t^k=\frac{t^8}{(1-t^4)(1-t^6)}\qquad\qquad\
\sum\limits_{\substack{k\ge 2 }}s_k^{\rm \textbf{(P)}}(2,{\rm VIb})t^k=\frac{t^6+t^8-t^{12}}{(1-t^4)(1-t^6)}\\
&\sum\limits_{\substack{k \geq 2}}s_k(3,{\rm Vb})t^k=\frac{t^6}{(1-t^2)(1-t^6)}\qquad\qquad\
\sum\limits_{\substack{k\ge 2}}s_k^{\rm \textbf{(P)}}(3,{\rm VIb})t^k=\frac{t^4+t^8-t^{10}}{(1-t^2)(1-t^6)}\\
&\sum\limits_{\substack{k \ge 2}}s_k(2,{\rm VIc})t^k=\frac{t^{11}}{(1-t^4)(1-t^6)}\qquad\qquad
\sum\limits_{k\geq 2}s_k^{\rm \textbf{(Y)}}(2,{\rm VIb})t^{k}=0\\
&
\sum\limits_{\substack{k \ge 2}}s_k(3,{\rm VIc}) t^k=\frac{t^{9}}{(1-t^2)(1-t^6)}\qquad\qquad
\sum\limits_{k\geq 2}s_k^{\rm \textbf{(Y)}}(3,{\rm VIb})t^{k}=0\\
&\sum\limits_{\substack{k \geq 3}}s_k(2,{\rm IIa})t^k=\frac{t^{19} \left(-t^8-t^6+t^4+t^2+1\right)}{\left(1-t^4\right)^2(1-t^6)\left(1-t^{10}\right)}+\frac{t^{16} \left(t^8-t^6-t^4+t^2+1\right)}{\left(1-t^4\right)^2(1-t^6)\left(1-t^{10}\right)}\\
&\sum\limits_{\substack{k \geq 3}}s_k(3,{\rm IIa})t^k=\frac{t^{15} \left(-t^{14}-t^{12}-t^{10}+2 t^8+2t^6+t^4+t^2+1\right)}{\left(1-t^4\right)\left(1-t^6\right)^2 \left(1-t^{10}\right)}\\
&\hspace{15ex}+\frac{t^{12} \left(t^{14}-t^{12}-t^{10}+2
	t^6+t^4+t^2+1\right)}{\left(1-t^4\right)\left(1-t^6\right)^2 \left(1-t^{10}\right)}\\
&\sum\limits_{\substack{k \geq 3}}s_k(2,{\rm IIIa+VIa/b})t^k =\frac{t^{25}
	\left(-t^{10}+t^8+t^6+t^4+t^2+1\right)}{\left(1-t^4\right)\left(1-t^6\right)\left(1-t^{10}\right)\left(1-t^{12}\right)}\\
&\hspace{27ex}+\frac{t^{12} \left(t^{22}-t^{18}-t^{16}-t^{14}-2 t^{12}+t^8+2 t^6+2
	t^4+2 t^2+1\right)}{\left(1-t^4\right)\left(1-t^6\right)\left(1-t^{10}\right)\left(1-t^{12}\right)}\\	
&\sum\limits_{\substack{k \geq 3}}s_k(3,{\rm IIIa+VIa/b}) t^k=\frac{t^{17} \left(-t^{18}+t^{16}+2 t^{14}+2 t^{12}+2 t^{10}+3 t^8+2t^6+t^4+t^2+1\right)}{\left(1-t^4\right)\left(1-t^6\right)\left(1-t^{10}\right)\left(1-t^{12}\right)}\\
&\hspace{15ex}+\frac{t^8 \left(t^{26}-t^{22}-2 t^{20}-2 t^{18}-2 t^{16}+t^{12}+4
	t^{10}+5 t^8+4 t^6+3 t^4+2 t^2+1\right)}{\left(1-t^4\right)\left(1-t^6\right)\left(1-t^{10}\right)
	\left(1-t^{12}\right)}\\
&\sum\limits_{\substack{k \geq 3}}s_k(2,{\rm IVa})t^k=\frac{t^{13} \left(t^{22}-t^{18}-t^{16}+t^{12}+2 t^8+2
	t^6+t^4+t^2+1\right)}{\left(1-t^4\right)\left(1-t^6\right)\left(1-t^{10}\right)\left(1-t^{12}\right)}\\
&\hspace{16ex}+\frac{t^{10}(-t^{22}+t^{18}+t^{16}+t^{12}+2
	t^{10}+t^{4}+t^{2}+1)}{\left(1-t^4\right)\left(1-t^6\right)\left(1-t^{10}\right)\left(1-t^{12}\right)}\\
&\sum\limits_{\substack{k \geq 3}}s_k(3,{\rm IVa}) t^k=\frac{t^9 \left(t^{26}-t^{22}+2 t^{18}+5 t^{16}+5 t^{14}+9 t^{12}+9
	t^{10}+8 t^8+6 t^6+5 t^4+2 t^2+1\right)}{\left(1-t^4\right)\left(1-t^6\right)\left(1-t^{10}\right)
	\left(1-t^{12}\right)}\\
&\hspace{7ex}+\frac{t^6 \left(-t^{26}+t^{22}+2 t^{20}+2 t^{18}+5 t^{16}+7 t^{14}+7
	t^{12}+7 t^{10}+8 t^8+6 t^6+5 t^4+2
	t^2+1\right)}{\left(1-t^4\right)\left(1-t^6\right)\left(1-t^{10}\right)
	\left(1-t^{12}\right)}\\
&\sum\limits_{\substack{k \geq 3}}s_k(2,{\rm Va})
t^k=\frac{t^{15}(-t^{12}+t^{2}+1)+t^{30}}{\left(1-t^4\right)\left(1-t^6\right)\left(1-t^{10}\right)\left(1-t^{12}\right)}\\
&\sum\limits_{\substack{k \geq 3}}s_k(3,{\rm Va}) t^k=\frac{t^{11} \left(-t^{18}-t^{16}+2 t^8+2 t^6+2t^4+t^2+1\right)+t^{16}
	(1 + t^4 + t^6) (1 + t^8)}{\left(1-t^4\right)\left(1-t^6\right)\left(1-t^{10}\right)
	\left(1-t^{12}\right)}
\end{align*}
Note that the series $\sum_{k\ge 3} k^n t^k$ has a pole of order $n+1$ at $t=1$. The pole of order $4$ at $t=1$ in the rational expression of $\sum_{k\geq 3}s_k(p,\Omega)t^k$ for $\Omega \in \left\{\rm I, IIa, IIIa+VIa/b, IVa, Va\right\}$ is coming from the term $(k-2) (k-1) (2 k-3)$ in $s_k(p,\Omega)$. In fact, we have the following results.
\begin{corollary} 
	\label{Corollary for sk and Plancherel measure}
	Let $p\ge 2$ be a prime and $k\ge 3$. Then, for $\Omega \in \left\{\rm I, IIa, IIIa+VIa/b, IVa, Va\right\}$,
	  \begin{equation}\label{relationship of sk and Plancherel measure}
	\begin{split}
	s_k(p,\Omega)&=a_{\Omega}\cdot \frac{(k-2) (k-1) \big(\frac k3-\frac 12\big)}{2^63^25}+b_{\Omega}\cdot \dfrac{7(-1)^k}{2^{7}3^{2}}(k-2) (k-1)+O(k),
	\end{split}
	\end{equation}
where $a_{\Omega}$ and $b_{\Omega}$ are given as follows.
\begin{equation}
\label{Coefficients of Plancherel measure term and $(k-2)(k-1)$ term}
\renewcommand{\arraystretch}{1.4}
\renewcommand{\arraycolsep}{.6cm}
\begin{array}{cccccc} 
\toprule
\Omega&{\rm I}&	{\rm IIa}&	{\rm IIIa+VIa/b}&	{\rm IVa}&	{\rm Va}\\
\toprule
a_{\Omega}&1&p^2-1&\frac{(p-1)(p^2+p+2)}{2}&(p-1)(p^3-1)&\frac{p(p-1)^2}{2}\\
\toprule
b_{\Omega}&1&0&\frac{(p-1)(p+3)}{2}&(p-1)^2&-\frac{(p-1)^2}{2}\\
\bottomrule
\end{array}
\end{equation}
\end{corollary}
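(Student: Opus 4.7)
The plan is to read off the claim directly from the explicit formulas already proved for $s_k(p,\Omega)$. Observe first the algebraic identity
$$
(k-2)(k-1)(2k-3) \;=\; 6\,(k-2)(k-1)\Bigl(\frac{k}{3}-\frac{1}{2}\Bigr),
$$
so the ``cubic'' normalizing constant $\frac{1}{2^{6}3^{2}5}$ in \eqref{relationship of sk and Plancherel measure} is exactly $\frac{6}{2^{7}3^{3}5}$, matching the prefactors of the $(k-2)(k-1)(2k-3)$ terms in Theorems~\ref{Theorem of I and IIb} and \ref{Theorem of IIa1}--\ref{Theorem of Va1}. Likewise the normalizing constant $\frac{7}{2^{7}3^{2}}$ of the $(-1)^k(k-2)(k-1)$ term is read off from the same theorems.

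For $p\geq 5$, I would simply compare the coefficients term by term. Each of the five statements lists $s_k(p,\Omega)$ as a finite linear combination of
$(k-2)(k-1)(2k-3)$, $(-1)^k(k-2)(k-1)$, $(2k-3)$, $(-1)^k$, $(-1)^k(2k-3)$, the bounded oscillatory functions $c_n(k),\,\hat c_n(k),\,c'_n(k),\,f_n(k)$, and Kronecker symbols $\delta_{k,n}$. All of these contribute $O(k)$ to the total, with the sole exceptions of the first two, whose coefficients are precisely $a_\Omega/(2^{7}3^{3}5)$ and $7b_\Omega/(2^{8}3^{2})$ respectively (the extra factor of $2$ in the ${\rm IIIa+VIa/b}$ and ${\rm Va}$ rows accounts for the $a_\Omega, b_\Omega$ being half-integers there). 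Comparing with the table \eqref{Coefficients of Plancherel measure term and $(k-2)(k-1)$ term} yields the corollary for $p\geq 5$.

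For $p=2$ and $p=3$, the closed-form polynomial expressions are not individually stated, but the explicit rational generating functions $\sum_{k\geq 3}s_k(p,\Omega)t^k$ are listed. From these one extracts the cubic and the $(-1)^k$-twisted quadratic coefficients by inspecting the poles at $t=1$ and $t=-1$. Concretely, the identity
$$
\frac{A}{(1-t)^4}=A\sum_{k\geq 0}\binom{k+3}{3}t^k
$$
shows that the coefficient of $k^3/6$ in $s_k(p,\Omega)$ equals $A=\lim_{t\to 1}(1-t)^4\sum_k s_k(p,\Omega)t^k$, which one evaluates by using $(1-t^n)/(1-t)\big|_{t=1}=n$; and analogously
$$
\frac{B}{(1+t)^2}=B\sum_{k\geq 0}(-1)^k(k+1)t^k
$$
gives the $(-1)^kk^2$ coefficient as $B=\lim_{t\to -1}(1+t)^2\sum_k s_k(p,\Omega)t^k$. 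Carrying out these two limits for each of the ten listed generating functions (five per prime) produces exactly the values of $a_\Omega$ and $b_\Omega$ recorded in the table for $p=2,3$, and the remaining error is absorbed into the poles at $t=1$ of order $\leq 2$ and the other unit-root poles, all contributing $O(k)$.

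The only genuinely delicate point is bookkeeping: one must check that the finite ``oscillatory correction'' terms (the $c$-type functions of \eqref{new notations} and the characteristic-function corrections indexed by $p \bmod 5, 8$) really are $O(k)$ and do not secretly contribute to the $k^3$ or $(-1)^kk^2$ coefficients. This is immediate because all these functions are bounded and periodic in $k$. Thus no single step is hard; the corollary is a direct extraction of leading asymptotics from the formulas of Sect.~\ref{countGsec} and the list of generating series for $p=2,3$.
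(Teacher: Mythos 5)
Your overall route is exactly the paper's: for $p\geq5$ read the two leading coefficients off the closed formulas of Theorems~\ref{Theorem of I and IIb} and \ref{Theorem of IIa1}--\ref{Theorem of Va1} (everything else being bounded-periodic times at most a linear factor, hence $O(k)$), and for $p=2,3$ extract them from the poles of the listed generating functions at $t=\pm1$. Two bookkeeping errors in your write-up need fixing, one of which is a step that fails as stated. First, the coefficient of $(-1)^k(k-2)(k-1)$ in $s_k(p,\Omega)$ is $7b_\Omega/(2^{7}3^{2})$, not $7b_\Omega/(2^{8}3^{2})$: already for $\Omega=\mathrm{I}$, Theorem~\ref{Theorem of I and IIb} has the term $\tfrac{7(-1)^k}{2^{7}3^{2}}(k-2)(k-1)$ while $b_{\mathrm I}=1$. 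The ``extra factor of $2$'' for $\mathrm{IIIa+VIa/b}$ and $\mathrm{Va}$ is already built into the tabulated half-integer values of $a_\Omega,b_\Omega$, so no change of normalization is needed. Second, and more seriously, for $p=2,3$ the quadratic term $b_\Omega\tfrac{7(-1)^k}{2^{7}3^{2}}(k-2)(k-1)$ corresponds to a pole of order \emph{three} at $t=-1$, since $\frac{1}{(1+t)^3}=\sum_{k\geq0}(-1)^k\binom{k+2}{2}t^k$ grows like $(-1)^k k^2/2$, whereas a double pole at $t=-1$ contributes only $(-1)^k(k+1)=O(k)$, which is already absorbed into the error term. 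So your prescription $B=\lim_{t\to-1}(1+t)^2\sum_k s_k(p,\Omega)t^k$ does not isolate $b_\Omega$ and would in fact diverge whenever $b_\Omega\neq0$; you must take $\lim_{t\to-1}(1+t)^3\sum_k s_k(p,\Omega)t^k$, after checking that the numerators vanish at $t=-1$ to high enough order that no genuine order-four pole (i.e.\ no $(-1)^kk^3$ term) survives from the denominators $(1-t^4)(1-t^6)(1-t^{10})(1-t^{12})$. With these corrections the argument coincides with the paper's proof, which likewise handles $p\geq5$ by inspection of the explicit formulas and $p=2,3$ by pole analysis of the generating series.
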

\begin{proof}
For $p\ge 5$, the result follows from Theorems~\ref{Theorem of I and IIb}, \ref{Theorem of IIa1}-\ref{Theorem of Va1}. For $p=2,3$, we note that the second term of \eqref{relationship of sk and Plancherel measure} does not contribute a pole at $t=1$ in $\sum_{k\geq 3}s_k(p,\Omega)t^k$ and the rational expression for 
$$\sum_{k\geq 3}s_k(p,\Omega)t^k -\sum_{k\geq 3}a_{\Omega}\cdot \frac{(k-2) (k-1) \big(\frac k3-\frac 12\big)}{2^63^25}t^k$$
has a pole of order $2$  at $t=1$. Hence, we obtain \eqref{relationship of sk and Plancherel measure} for $p=2,3$ as well.
\end{proof}
In Sect.~\ref{Relationship with the Plancherel measure} we will show that $a_\Omega$ equals the total Plancherel measure of the tempered Iwahori-spherical representations of $\PGSp(4,\Q_p)$ of type $\Omega$. We do not know a similar interpretation for the quantity $b_\Omega$.

\subsection{The cases \texorpdfstring{$k=1$}{} and \texorpdfstring{$k=2$}{}}
The formulas in the theorems in the previous section hold for $k\geq3$. We now consider $k=1$ and $k=2$.
\begin{proposition}\label{k1prop}
We have $s_1(p, \Omega)=0$ for all $\Omega$ in Table~\ref{rep with Arthur type}.
\end{proposition}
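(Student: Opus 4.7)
The plan is to reduce the statement to Corollary~\ref{BQautopropcor1}, which has already done all the representation-theoretic heavy lifting. Specifically, I would specialize the fundamental identity \eqref{dimskreleq} to $\Gamma_p = \B(p)$ (corresponding to the Iwahori subgroup $C_p = I$ of $G(\Q_p)$) and $k = 1$, obtaining
\begin{equation*}
\dim_\C S_1(\B(p)) \;=\; \sum_{\Omega} s_1(p, \Omega) \, d_{I, \Omega},
\end{equation*}
where $\Omega$ ranges over the types in Table~\ref{rep with Arthur type}.

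Next I would observe that every representation type in Table~\ref{rep with Arthur type} is, by the very definition given in the notation section, Iwahori-spherical, so $d_{I,\Omega} \geq 1$ for each $\Omega$. Since each $s_1(p,\Omega)$ is a non-negative integer and the coefficients $d_{I,\Omega}$ are strictly positive, vanishing of the left-hand side forces each summand on the right-hand side to vanish individually. So the whole task reduces to showing $\dim_\C S_1(\B(p)) = 0$.

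For that vanishing I would cite Corollary~\ref{BQautopropcor1} with $j = 0$ and $N = p$. The real content sits in that corollary: its proof rules out Arthur types (G), (Y), (P) for a $\pi$ giving rise to a weight $1$ Siegel cusp form by archimedean considerations (the lowest weight module of weight $(1,1)$ is non-tempered and does not appear as the archimedean component of a packet of those types), leaving only types (Q) and (B), which by Proposition~\ref{BQautoprop} must fail to be Iwahori-spherical at some finite prime---incompatible with membership in $S_1(\B(p))$. The only obstacle one might worry about is whether every $\Omega$ in the table truly has $d_{I,\Omega}\geq 1$, but this is immediate (and can also be read off \cite[Table 3]{Schmidt200501}). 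Consequently the proof reduces to a one-line invocation of \eqref{dimskreleq} and Corollary~\ref{BQautopropcor1}.
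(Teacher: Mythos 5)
Your proof is correct. It has the same skeleton as the paper's argument --- a linear identity expressing a weight-$1$ dimension as a non-negative combination of the $s_1(p,\Omega)$ with strictly positive coefficients, plus vanishing of that dimension --- but it sources the vanishing differently. The paper cites \cite[Theorem~6.1]{Ibukiyama2007} to conclude that the left-hand sides of all four equations in \eqref{general formula for dim of spaces} vanish and then invokes non-negativity, whereas you use only the single Iwahori-level instance of \eqref{dimskreleq} (which suffices, since $d_{I,\Omega}\geq1$ for every $\Omega$ in Table~\ref{rep with Arthur type}) and obtain $\dim_{\C}S_1(\B(p))=0$ from Corollary~\ref{BQautopropcor1} with $j=0$, $N=p$. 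Since that corollary is proved in the paper via Arthur packets (Proposition~\ref{BQautoprop}) rather than by Ibukiyama's dimension computations, your route is internally self-contained and representation-theoretic, at the cost of leaning on the heavier machinery of Sect.~\ref{Section on B and Q types}; the paper's route is a one-line external citation. Both are valid, and there is no circularity, as Corollary~\ref{BQautopropcor1} does not depend on the proposition you are proving.
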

\begin{proof}
By \cite[Theorem~6.1]{Ibukiyama2007}, the left hand sides of the equations \eqref{general formula for dim of spaces} are all zero. Since all the quantities on the right hand sides are non-negative numbers, this implies our assertion.
\end{proof}

We cannot determine the numbers $s_2(p,\Omega)$ in general because of a lack of global dimension formulas, but we can at least treat the cases $p=2$ and $p=3$ (see Table~\ref{historytable}). The following lemma is useful for proving that $\dim_{\C} S_2(\B(3))=0$.

\begin{lemma}\label{Lemma for I(p)}
 Let $k$ be a positive integer, and let $\Gamma$ be either $\Gamma_0(p)$ or $\Kl(p)$. Suppose that $f\in S_k(\B(p))$. If $f^2\in S_{2k}(\Gamma)$, then $f\in S_k(\Gamma)$.
\end{lemma}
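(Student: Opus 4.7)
The plan is to reduce the lemma to a purely group-theoretic assertion about characters of $\Gamma$ and then settle that assertion via the Bruhat decomposition modulo~$p$. If $f\equiv 0$ there is nothing to prove, so assume $f\not\equiv 0$. For each $\gamma\in\Gamma$, the hypothesis $f^2|_{2k}\gamma = f^2$ combined with the multiplicativity of the slash action gives $(f|_k\gamma)^2 = f^2$, and since the Siegel upper half space is connected this forces $f|_k\gamma = \epsilon(\gamma)\,f$ for a uniquely determined sign $\epsilon(\gamma)\in\{\pm1\}$. The cocycle property of the slash action makes $\epsilon\colon\Gamma\to\{\pm1\}$ a group homomorphism, and the hypothesis $f\in S_k(\B(p))$ gives $\epsilon|_{\B(p)}\equiv1$. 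The lemma is therefore equivalent to the statement that any homomorphism $\epsilon\colon\Gamma\to\{\pm1\}$ that is trivial on $\B(p)$ must be trivial on all of $\Gamma$.

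To prove this I would reduce modulo $p$. Let $P\subset\Sp(4,\mathbb{F}_p)$ denote the Siegel parabolic when $\Gamma=\Gamma_0(p)$ and the Klingen parabolic when $\Gamma=\Kl(p)$. The reduction-mod-$p$ map sends $\Gamma$ surjectively onto $P(\mathbb{F}_p)$ with kernel $\Gamma(p)\subset\B(p)$, and $\B(p)$ is precisely the preimage of the Borel $B(\mathbb{F}_p)\subset P(\mathbb{F}_p)$. The Bruhat decomposition
\[
P(\mathbb{F}_p)=B(\mathbb{F}_p)\;\sqcup\;B(\mathbb{F}_p)\,s\,B(\mathbb{F}_p),
\]
where $s$ is the unique simple reflection belonging to the Weyl group of the Levi of $P$, then lifts to a disjoint decomposition $\Gamma=\B(p)\sqcup\B(p)\,\tilde s\,\B(p)$ for any integral lift $\tilde s\in\Gamma$ of $s$. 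Because $\epsilon$ vanishes on $\B(p)$, its value on the second cell is the constant $\epsilon(\tilde s)\in\{\pm1\}$, so $\epsilon$ is determined entirely by this single sign.

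The decisive input is the standard Bruhat multiplication rule $(BsB)(BsB)=B\cup BsB$ for a simple reflection $s$, in which both cells on the right are nonempty. This produces $\bar g_1,\bar g_2\in B(\mathbb{F}_p)\,s\,B(\mathbb{F}_p)$ with $\bar g_1\bar g_2\in B(\mathbb{F}_p)\,s\,B(\mathbb{F}_p)$; lifting integrally yields $g_1,g_2\in\B(p)\,\tilde s\,\B(p)$ with $g_1g_2\in\B(p)\,\tilde s\,\B(p)$, and applying $\epsilon$ gives
\[
\epsilon(\tilde s)^2 = \epsilon(g_1)\epsilon(g_2) = \epsilon(g_1g_2) = \epsilon(\tilde s),
\]
forcing $\epsilon(\tilde s)=1$ and hence $\epsilon\equiv1$ on $\Gamma$, which is the claim.

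The main point requiring care will be the integral lifting at the last step: surjectivity of the reduction map $\Gamma\to P(\mathbb{F}_p)$, the identification of $\B(p)$ as the preimage of $B(\mathbb{F}_p)$, and the upgrade of $\bar g_1,\bar g_2$ and their product to integral elements of the correct double cosets. Each of these is a routine consequence of the definitions in \eqref{classical congruence subgroups}, but they should be spelled out. The two cases $\Gamma\in\{\Gamma_0(p),\Kl(p)\}$ are handled uniformly, differing only in the choice of $s$ (the simple reflection associated to the short root for $\Gamma_0(p)$, and to the long root for $\Kl(p)$).
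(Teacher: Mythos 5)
Your proposal is correct, and its first half is word-for-word the paper's argument: extract the sign character $\epsilon\colon\Gamma\to\{\pm1\}$ from $(f|_k\gamma)^2=f^2$, note it is a homomorphism trivial on $\B(p)$, and reduce the lemma to showing such a character is trivial. Where you diverge is in how you kill the character. The paper observes that the kernel $H$ of $\epsilon$ is a \emph{normal} subgroup of $\Gamma$ squeezed between $\B(p)$ and $\Gamma$, and asserts $H=\Gamma$ ``by applying the projection mod $p$'' --- implicitly, that no proper normal subgroup of the mod-$p$ parabolic $P(\mathbb{F}_p)$ contains the Borel. You instead never invoke normality: you use the Bruhat decomposition $P(\mathbb{F}_p)=B\sqcup BsB$ together with the cell-multiplication rule $(BsB)(BsB)=B\cup BsB$ (both cells nonempty) to force $\epsilon(\tilde s)^2=\epsilon(\tilde s)$ and hence $\epsilon(\tilde s)=1$. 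Both arguments run through the same reduction mod $p$ and the same structural fact about the two parabolics, so this is a variant rather than a genuinely new route; what your version buys is that it makes the paper's one-line final step completely explicit and self-contained (at the modest cost of the lifting bookkeeping you already flag). The only detail worth adding is the sentence justifying that $\B(p)\,\tilde s\,\B(p)$ is the \emph{full} preimage of $BsB$, which follows from $\Gamma(p)\subset\B(p)$ and surjectivity of $\B(p)\to B(\mathbb{F}_p)$.
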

\begin{proof}
Suppose $f\in S_k(\B(p))$ and $f^2\in S_{2k}(\Gamma)$. We have the slash operator $(f|_{k}\gamma)(Z):=(CZ+D)^{-k}f(\gamma Z)$ for $\gamma=\begin{bsmallmatrix}A&B\\C&D\end{bsmallmatrix}\in \Sp(4,\Z)$. Since $(f|_k\gamma)^2=f^2|_{2k}\gamma=f^2$ for $\gamma\in\Gamma$, there exists a sign $\varepsilon_\gamma\in\{\pm1\}$ such that $f|_k\gamma=\varepsilon_\gamma f$. Since $f|_k\gamma_1\gamma_2=(f|_k\gamma_1)|_k \gamma_2$, we see that $\varepsilon:\Gamma\rightarrow \{\pm 1\}$ is a homomorphism and $f\in S_k(\Gamma, \varepsilon)$.

We will show that $\varepsilon$ is trivial. Since $f\in S_k(\B(p))$, the kernel $H$ of $\varepsilon$ contains $B(p)$. Hence $H$ is a normal subgroup of $\Gamma$ with $B(p)\subset H\subset\Gamma$. This implies $H=\Gamma$, which is maybe most easily seen by applying the projection mod $p$.
\end{proof}

\begin{proposition}\label{k2prop}
We have $s_2(p, \Omega)=0$ for $p\in\{2,3\}$ and all $\Omega$ in Table~\ref{rep with Arthur type}.
\end{proposition}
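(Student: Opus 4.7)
The plan is to reduce the proposition to the single assertion $\dim_{\C}S_2(\B(p))=0$ for $p\in\{2,3\}$. The Borel-level equation of the system \eqref{general formula for dim of spaces} expresses $\dim_{\C}S_k(\B(p))$ as a linear combination of the ten quantities $s_k(p,\Omega)$ for $\Omega\in\{{\rm I},{\rm IIa},{\rm IIb},{\rm IIIa},{\rm IVa},{\rm Va},{\rm Vb},{\rm VIa},{\rm VIb},{\rm VIc}\}$, with every coefficient strictly positive. Since each summand is non-negative, the vanishing of the left-hand side at $k=2$ forces every one of these $s_2(p,\Omega)$ to vanish, and the further splitting $s_2(p,{\rm VIb})=s_2^{\mathbf{(G)}}(p,{\rm VIb})+s_2^{\mathbf{(P)}}(p,{\rm VIb})+s_2^{\mathbf{(Y)}}(p,{\rm VIb})$ into non-negative parts then kills each Arthur-type piece as well. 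The remaining types $\rm IVb, IVc, IVd, Vc$ are zero for free by the discussion after \eqref{dimskreleq}, and the types $\rm IIIb, Vd, VId$ by Corollary~\ref{BQautopropcor2}; together these cover every $\Omega$ in Table~\ref{rep with Arthur type}.

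For $p=2$ the vanishing $\dim_{\C}S_2(\B(2))=0$ is known from \cite[Sect.~1]{Ibukiyama1984}. For $p=3$, where no such formula is directly available, the plan is to invoke Lemma~\ref{Lemma for I(p)}: given $f\in S_2(\B(3))$, I would look at $f^2\in S_4(\B(3))$ and show that $S_4(\B(3))=S_4(\Gamma_0(3))$. Subtracting the $\Gamma_0(p)$-equation from the $\B(p)$-equation in \eqref{general formula for dim of spaces} and specializing to $p=3$, $k=4$, this identity reduces to the vanishings $s_4(3,\Omega)=0$ for $\Omega\in\{{\rm I},{\rm IIa},{\rm IIb},{\rm IIIa},{\rm IVa},{\rm Va},{\rm Vb},{\rm VIa},{\rm VIc}\}$ (the $\rm VIb$-contribution cancels in the difference). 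Each vanishing can be read off from the generating series computed earlier: from Theorem~\ref{Theorem of I and IIb} for $\Omega={\rm I,IIb}$, and from the explicit $p=3$ series at the end of Sect.~\ref{countGsec} for the remaining types, whose lowest nonzero power of $t$ in every case exceeds $4$ (for instance $\sum s_k(3,{\rm IVa})t^k$ starts at $t^6$, $\sum s_k(3,{\rm IIIa+VIa/b})t^k$ starts at $t^8$; non-negativity of the summands then also forces $s_4(3,{\rm IIIa})$ and $s_4(3,{\rm VIa})$ individually to vanish). It follows that $f^2\in S_4(\Gamma_0(3))$, so Lemma~\ref{Lemma for I(p)} gives $f\in S_2(\Gamma_0(3))$, and $\dim_{\C}S_2(\Gamma_0(3))=0$ by \cite[Sect.~5.3]{Ibukiyama2018} forces $f=0$.

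The main obstacle is precisely the $p=3$ case: one cannot quote a ready-made dimension formula for $\B(3)$ at weight $2$. The squaring trick of Lemma~\ref{Lemma for I(p)} transports the question to weight $4$, where the generating series developed in Sections~\ref{countPYsec}--\ref{countGsec} do apply and identify $S_4(\B(3))$ with $S_4(\Gamma_0(3))$; the known vanishing of $\dim_{\C}S_2(\Gamma_0(3))$ then closes the argument. Everything else is non-negativity and bookkeeping through the positive linear system \eqref{general formula for dim of spaces}.
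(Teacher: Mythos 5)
Your proof is correct and follows essentially the same route as the paper: the crux in both is the squaring trick of Lemma~\ref{Lemma for I(p)} applied to a hypothetical $f\in S_2(\B(3))$, using $S_4(\B(3))=S_4(\Gamma_0(3))$ and the known vanishing of $S_2(\Gamma_0(3))$. The only cosmetic differences are that you funnel everything through the single Borel-level equation with strictly positive coefficients (the paper first kills the non-{\bf(G)} types and then works through all four equations of \eqref{k2propeq1}), and that you obtain $S_4(\B(3))=S_4(\Gamma_0(3))$ from the computed vanishings of the $s_4(3,\Omega)$ rather than by quoting $\dim_{\C}S_4(\B(3))=\dim_{\C}S_4(\Gamma_0(3))=1$ directly from \cite[Sect.~2.4]{Ibukiyama2007}.
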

\begin{proof}
We already know this is true for $\Omega={\rm I}$ and for all the Saito-Kurokawa and Yoshida types. Hence the formulas in \eqref{general formula for dim of spaces} become
\begin{equation}\label{k2propeq1}
	\begin{split}
	&\dim_{\C} S_2({\rm K}(p))=s_2(p,{\rm IIa}),\\
	&\dim_{\C} S_2(\Gamma_0(p))=s_2(p,{\rm IIa})+2s_2(p,{\rm IIIa}+{\rm VIa/b}),\\
	&\dim_{\C} S_2(\Kl(p))=2s_2(p,{\rm IIa})+s_2(p,{\rm IIIa}+{\rm VIa/b})+s_2(p,{\rm Va}),\\
	&\dim_{\C} S_2(\B(p))=4s_2(p,{\rm IIa})+4s_2(p,{\rm IIIa}+{\rm VIa/b})+s_2(p,{\rm IVa})+2s_2(p,{\rm Va}).
	\end{split}
\end{equation}
Most of the dimensions on the left, except for $S_2(\B(3))$, are known to be zero by \cite[Sect.~5.3]{Ibukiyama2018} and \cite[Sect.~1]{Ibukiyama1984}. It follows that the only potentially non-zero number is
\begin{equation}\label{k2propeq2}
 \dim_{\C} S_2(\B(3))=s_2(3,{\rm IVa}).
\end{equation}
Suppose that there exists a non-zero $f\in S_2(\B(3))$. We have $\dim_{\C} S_4(\B(3))=\dim_{\C}  S_4(\Gamma_0(3))=1$ by \cite[Sect.~2.4]{Ibukiyama2007}. In particular, $S_4(\B(3))=S_4(\Gamma_0(3))$. The function $f^2$ spans this $1$-dimensional space. Lemma~\ref{Lemma for I(p)} then implies that $f\in S_2(\Gamma_0(3))$. But $S_2(\Gamma_0(3))=0$ by \cite[Sect.~5.3]{Ibukiyama2018}, a contradiction. So, $s_2(3,{\rm IVa})=\dim_{\C} S_2(\B(3))=0$.
\end{proof}

For $\Omega=\{{\rm IIa, IIIa+VIa/b, IVa, Va}\}$, we give some numerical examples of $s_k(p,\Omega)$ for $2\le p<20$ in Appendix~\ref{AppendixB}.

\subsection{Dimensions of the spaces of newforms}\label{newformssec}
In this section we discuss some of the available notions of newforms for the spaces of Siegel modular forms and write their dimensions in terms of the quantities $s_k(p,\Omega)$.

There is no uniform definition of the spaces of newforms for Siegel modular forms for all the congruence subgroups in \eqref{classical congruence subgroups}, but there have been several attempts in the literature to define a good notion of Siegel modular newforms. Ibukiyama  defines old- and newforms
for the minimal congruence subgroup $\B(p)$ in \cite{Ibukiyama1984}, and for the 
paramodular group ${\rm K}(p)$ in \cite{Ibukiyama1985}. He provides further evidence to support these  definitions.
There is a definition of  newforms for $\Gamma_0(N)$ for any $N$ by Andrianov \cite{Andrianov1999}.
These definitions coincide with the notion of newforms in \cite{ Schmidt200501} for $\B(p)$, $\Gamma_0(p)$, and ${\rm K}(p)$.

Now there is a well established newform theory for Siegel modular forms with respect to the paramodular group ${\rm K}(p)$; see \cite{RobertsSchmidt2006, Schmidt200501}. A Siegel modular form with respect to the paramodular group is called a paramodular form. Let $S_k^{{\rm new}}({\rm K}(p))$ be the space of paramodular newforms, and let $S_k^{\mathrm{new},\mathrm{\bf(G)}}({\rm K}(p))$ be the space of paramodular newforms of {\bf(G)} type. Using the local and global newform theory, we can write $ \dim_{\C} S_k^{{\rm new}}({\rm K}(p))$  and $\dim_{\C} S_k^{\mathrm{new},\mathrm{\bf(G)}}({\rm K}(p))$ in terms of the quantities $s_k(p,\Omega)$ as follows.
\begin{proposition}\label{Proposition of newform formula of K(p)}
	Suppose $k\geq 1$ and $p$ is a prime. Then
	\begin{align*}    
	\dim_{\C} S_k^{{\rm new}}({\rm K}(p))&=s_k(p,{\rm IIa})+s_k(p,{\rm Vb})+s_k(p,{\rm VIc}),\\
	\dim_{\C} S_k^{\mathrm{new},\mathrm{\bf(G)}}({\rm K}(p))&=s_k(p,{\rm IIa}).
	\end{align*}
\end{proposition}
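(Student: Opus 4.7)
The plan is to combine the local paramodular newform theory of \cite{RobertsSchmidt2006,Schmidt200501} with the dictionary between paramodular eigenforms and cuspidal automorphic representations underlying \eqref{dimskreleq}. First, I would observe that every Hecke eigenform in $S_k({\rm K}(p))$ arises from some $\pi\cong\otimes\pi_v\in S_k(p,\Omega)$ for a unique Iwahori-spherical type $\Omega$, and each such $\pi$ contributes $d_{{\rm K}(\p),\Omega}$ to $\dim_\C S_k({\rm K}(p))$. Such an eigenform is \emph{new} precisely when $\pi_p$ has paramodular conductor equal to $\p$, i.e.\ when $\pi_p$ admits $K(\p)$-fixed vectors but no $K$-fixed vector; in that case the space of newvectors in $\pi_p$ is one-dimensional by the local newform theorem, so such $\pi$ contribute exactly $1$ each to $\dim_\C S_k^{\rm new}({\rm K}(p))$.

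Next I would read off from \cite[Table~3]{Schmidt200501} the Iwahori-spherical types $\Omega$ satisfying $d_{K,\Omega}=0$ and $d_{{\rm K}(\p),\Omega}=1$: these are precisely IIa, Vb, Vc and VIc. The spherical types I, IIb, IIIb, IVd, Vd, VId are ruled out because they already contribute oldforms at level $\p$; the types IIIa, IVa, Va, VIa satisfy $d_{{\rm K}(\p),\Omega}=0$, i.e.\ they have no $K(\p)$-fixed vectors at all; and VIb admits no paramodular vectors at any level. Since the family Vc has already been subsumed into Vb under the trivial-central-character convention set up earlier in the paper, summing the contributions yields
\begin{equation*}
\dim_\C S_k^{\rm new}({\rm K}(p))=s_k(p,{\rm IIa})+s_k(p,{\rm Vb})+s_k(p,{\rm VIc}),
\end{equation*}
which is the first formula.

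For the second formula I would restrict the above bijection to the subspace cut out by Arthur packets of type $\mathbf{(G)}$. A glance at Table~\ref{rep with Arthur type} shows that IIa can appear as a local component only in Arthur packets of type $\mathbf{(G)}$, whereas Vb and VIc appear only in packets of type $\mathbf{(P)}$. Hence $s_k^{\mathbf{(G)}}(p,{\rm Vb})=s_k^{\mathbf{(G)}}(p,{\rm VIc})=0$ and $s_k^{\mathbf{(G)}}(p,{\rm IIa})=s_k(p,{\rm IIa})$, giving $\dim_\C S_k^{{\rm new},\mathbf{(G)}}({\rm K}(p))=s_k(p,{\rm IIa})$.

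There is no real obstacle: the heavy lifting is done by the local paramodular newform theorem and the corresponding global newform theorem, both cited directly. The only minor point of care is the identification of Vc with Vb under the trivial-central-character convention, which has already been addressed in the paper.
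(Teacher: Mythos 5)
Your proposal is correct and follows essentially the same route as the paper: the paper simply cites the global paramodular newform theorem \cite[Theorem~3.3.12]{Schmidt200501} to conclude that newform eigenforms correspond to $\pi$ with $\pi_p$ of type IIa, Vb or VIc, and then reads off from Table~\ref{rep with Arthur type} that only IIa occurs in packets of type $\mathbf{(G)}$. You merely unpack that citation by deriving the list $\{$IIa, Vb, Vc$\,(=$Vb$)$, VIc$\}$ from the local dimension data, which is a faithful elaboration of the same argument.
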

\begin{proof}
By \cite[Theorem~3.3.12]{Schmidt200501}, there is a paramodular newform $f \in S_k^{{\rm new}}({\rm K}(p))$ that corresponds to a cuspidal automorphic representation $\pi \in S_k(p,\Omega)$ such that $\Omega$ is one of the types ${\rm IIa}$, ${\rm Vb}$ and ${\rm VIc}$. Out of these three types, only a representation of type ${\rm IIa}$ can appear as a local component of a cuspidal automorphic representation of {\bf(G)} type. The assertions follow.
\end{proof}
There is a definition of the space $S_k^{{\rm new}}(\B(p))$ of newforms with respect to the Iwahori subgroup $\B(p)$ in \cite[Sect.~3.3]{Schmidt200501}. Using this definition and \cite[Theorem~3.3.2]{Schmidt200501}, it is easy to see that if $\pi \in S_k(p,\Omega)$ is the cuspidal automorphic representation associated to $f \in S_k^{{\rm new}}(\B(p))$, then $\Omega$ is of type $\rm IVa$. Hence we get the following result.
\begin{proposition}\label{Proposition of newform formulas I(p)}
	Suppose $k\geq 1$ and $p$ is a prime. Then
	\begin{align*}
	\dim_{\C} S_k^{{\rm new}}(\B(p))&=s_k(p,{\rm IVa}).
	\end{align*}
\end{proposition}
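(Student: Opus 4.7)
The plan is to leverage the global newform theory developed in \cite[Sect.~3.3]{Schmidt200501}, together with the representation-theoretic dictionary recalled in Section~\ref{Section on Iwahori-spherical representations and Arthur packets} of the present paper. Using the correspondence \eqref{dimskreleq}, every newform $f \in S_k^{\rm new}(\B(p))$ arises from a cuspidal automorphic representation $\pi = \otimes \pi_v$ with $\pi_v$ unramified for $v \ne p,\infty$, $\pi_\infty$ a holomorphic discrete series of weight $(k,k)$, and $\pi_p$ an Iwahori-spherical representation of some type $\Omega$. The task reduces to determining which $\Omega$ can occur.

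First I would unwind the newform definition from \cite[Sect.~3.3]{Schmidt200501}: it declares $f \in S_k(\B(p))$ to be new precisely when the corresponding vector in $\pi_p^I$ cannot be obtained by level-raising from the $K$-fixed subspace via the usual oldform embeddings. By \cite[Theorem~3.3.2]{Schmidt200501}, this characterization matches the purely local condition that $\pi_p$ admits $I$-fixed vectors but no fixed vectors under any parahoric strictly larger than $I$ — that is, $d_{H,\Omega} = 0$ for $H \in \{K, \mathrm{K}(\p), \mathrm{Si}(\p), \mathrm{Kl}(\p)\}$ while $d_{I,\Omega} \geq 1$.

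Second I would consult \cite[Table~3]{Schmidt200501}, which tabulates $d_{H,\Omega}$ across all Iwahori-spherical types and all parahoric subgroups. A direct inspection shows that the only type in Table~\ref{rep with Arthur type} meeting this condition is $\Omega = {\rm IVa}$, the Steinberg representation $\sigma \St_{\GSp(4)}$, which has exactly one $I$-fixed vector and none under any larger parahoric. All other types — I, IIa, IIb, IIIa, Va, Vb, VIa, VIb, VIc — have at least one non-zero $d_{H,\Omega}$ with $H \supsetneq I$, hence contribute only to the oldform subspace.

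Combining these two observations with the decomposition \eqref{dimskreleq} for $C_p = I$, every $\pi \in S_k(p,{\rm IVa})$ contributes a one-dimensional $\pi_p^I$ of newforms, and no other $\Omega$ contributes to the new subspace. Therefore $\dim_\C S_k^{\rm new}(\B(p)) = s_k(p,{\rm IVa})$. The only genuine subtlety in this plan is verifying that the definition of new in \cite[Sect.~3.3]{Schmidt200501} indeed translates into the stated local vanishing condition; this is the content of \cite[Theorem~3.3.2]{Schmidt200501} and requires no further work here.
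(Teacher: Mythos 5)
Your argument is correct and follows essentially the same route as the paper: both invoke the newform definition of \cite[Sect.~3.3]{Schmidt200501} together with \cite[Theorem~3.3.2]{Schmidt200501} to reduce the claim to the local observation that, among the Iwahori-spherical types, only $\sigma\St_{\GSp(4)}$ (type IVa) has Iwahori-fixed vectors but no fixed vectors under any strictly larger parahoric, and then read off the count from \eqref{dimskreleq} using $d_{I,\mathrm{IVa}}=1$. The paper simply leaves the inspection of \cite[Table~3]{Schmidt200501} implicit, whereas you spell it out.
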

Similarly, using the definition of the space $S_k^{{\rm new}}(\Gamma_0(p))$ of newforms with respect to the Siegel congruence subgroup $\Gamma_0(p)$ and \cite[Theorem~3.3.9]{Schmidt200501}, we get the following result.
\begin{proposition}\label{Proposition of newform formulas Si(p)}
	Suppose $k\geq 1$ and $p$ is a prime. Then
	\begin{align*}
	\dim_{\C} S_k^{{\rm new}}(\Gamma_0(p))&=s_k(p,{\rm IIa})+2s_k(p,{\rm IIIa+VIa/b})\\
	&+s_k(p,{\rm Vb})+s_k^{\rm \mathbf{(P)}}(p,{\rm VIb})+s_k^{\rm \mathbf{(Y)}}(p,{\rm VIb}).
	\end{align*}
\end{proposition}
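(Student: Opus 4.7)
The plan is to parallel the proofs of Propositions~\ref{Proposition of newform formula of K(p)} and~\ref{Proposition of newform formulas I(p)}, now for the Siegel congruence subgroup. By the global newform theorem \cite[Theorem~3.3.9]{Schmidt200501}, each $\pi\cong\otimes\pi_v\in S_k(p,\Omega)$ contributes $\dim_\C\pi_p^{{\rm Si}(p),\,\mathrm{new}}$ to $\dim_\C S_k^{\rm new}(\Gamma_0(p))$, so I would first write
\begin{equation*}
\dim_\C S_k^{\rm new}(\Gamma_0(p))=\sum_\Omega s_k(p,\Omega)\,d^{\rm new}_{\Omega,{\rm Si}(p)},
\end{equation*}
where $d^{\rm new}_{\Omega,{\rm Si}(p)}$ denotes the common dimension of the local newvector space at ${\rm Si}(p)$ for Iwahori-spherical representations of type $\Omega$.

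The first main step is then to read off the non-zero contributions from the local Siegel-parahoric newform theory in \cite{Schmidt200501}. One checks that $d^{\rm new}_{\Omega,{\rm Si}(p)}\neq 0$ precisely for $\Omega\in\{{\rm IIa},{\rm IIIa},{\rm Vb},{\rm VIa},{\rm VIb}\}$, with respective values $1,2,1,1,1$. Substituting these gives the preliminary formula
\begin{equation*}
\dim_\C S_k^{\rm new}(\Gamma_0(p))=s_k(p,{\rm IIa})+2s_k(p,{\rm IIIa})+s_k(p,{\rm Vb})+s_k(p,{\rm VIa})+s_k(p,{\rm VIb}).
\end{equation*}

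The second step is purely bookkeeping: rewriting the right-hand side in the Arthur-packet form demanded by the statement. Since IIIa and VIa are generic, by the discussion following Table~\ref{rep with Arthur type} they occur only in packets of type \textbf{(G)}, so $s_k(p,{\rm IIIa})=s_k^{\textbf{(G)}}(p,{\rm IIIa})$ and $s_k(p,{\rm VIa})=s_k^{\textbf{(G)}}(p,{\rm VIa/b})$ by \eqref{IIIa/VI. VIa, VIb I}. For VIb, combining \eqref{no. of cup form1} with \eqref{IIIa/VI. VIa, VIb I} gives $s_k(p,{\rm VIb})=s_k^{\textbf{(G)}}(p,{\rm VIa/b})+s_k^{\textbf{(P)}}(p,{\rm VIb})+s_k^{\textbf{(Y)}}(p,{\rm VIb})$. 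Collecting the two \textbf{(G)}-type VIa/b contributions together with the $2s_k^{\textbf{(G)}}(p,{\rm IIIa})$ term, and using the definition \eqref{IIIa/VI. VIa, VIb II}, converts the sum to $2s_k(p,{\rm IIIa+VIa/b})$, leaving exactly $s_k^{\textbf{(P)}}(p,{\rm VIb})$ and $s_k^{\textbf{(Y)}}(p,{\rm VIb})$ as separate additional terms.

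I expect the only non-routine ingredient is the first step, i.e.\ confirming from the local tables in \cite{Schmidt200501} that the Siegel-parahoric newvector dimensions are as listed; in particular, that IIIa contributes $2$ while each of VIa and VIb contributes $1$, matching the coefficient $2$ in front of $s_k(p,{\rm IIIa+VIa/b})$ after the $L$-packet identification. Once those local dimensions are in hand, the rest of the argument mirrors the two preceding propositions.
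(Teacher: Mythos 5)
Your proposal is correct and follows essentially the same route as the paper, which simply invokes the definition of $S_k^{\rm new}(\Gamma_0(p))$ together with \cite[Theorem~3.3.9]{Schmidt200501}; your local Siegel-parahoric newvector dimensions $(1,2,1,1,1)$ for $\{{\rm IIa},{\rm IIIa},{\rm Vb},{\rm VIa},{\rm VIb}\}$ are consistent with the coefficients in \eqref{general formula for dim of spaces}, and your Arthur-packet bookkeeping via \eqref{IIIa/VI. VIa, VIb I} and \eqref{IIIa/VI. VIa, VIb II} is exactly what is needed. You merely spell out the details the paper leaves implicit.
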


There is no definition of the newforms with respect to the Klingen congruence subgroup given in \cite{Schmidt200501}. But one can define the space $S_k^{{\rm new}}(\Kl(p))$ in a similar manner as the space $S_k^{{\rm new}}(\Gamma_0(p))$ is defined in 
	\cite{Schmidt200501}. In that case one would have the following.
\begin{remark}
	For $k\geq 1$ and  any prime $p$, 
	$$\dim_{\C} S_k^{{\rm new}}(\Kl(p))=s_k(p,{\rm IIIa+VIa/b})+s_k(p,{\rm Va}).$$
\end{remark}

\section{Plancherel measures}
In this section we relate the global quantity $s_k(p,\Omega)$ to the total Plancherel measure of the tempered Iwahori-spherical representations of $\PGSp(4,\Q_p)$ of type $\Omega$, which is a local quantity. In Sect.~\ref{Relationship with the Plancherel measure}, we compute the local Plancherel measures of the tempered Iwahori-spherical representations of $\PGSp(4,\Q_p)$. In Sect.~\ref{Section on More general limit multiplicity formulas}, we derive that the local representation types at $v=p$ as $\pi\cong\otimes_{v} \pi_v $ varies in $S_k(p,\Omega)$ are equidistributed with respect to their Plancherel measure. We establish a similar result for the vector-valued case using the automorphic Plancherel density theorem from \cite{Shin2012}.

\subsection{Calculation of local Plancherel measures}\label{Relationship with the Plancherel measure}
Let $F$ be a non-archimedean local field of characteristic zero with residual characteristic $p$. Let $\OF$ be the ring of integers of $F$ with the maximal ideal $\p$ and let $q$ be the order of the residue field of $F$. In this section,
let $G=\PGSp(4,F)$. Let $K$ be the image of $\GSp(4,\OF)$ in $G$. We fix a Haar measure $\mu$ on $G$ for which $K$ has volume $1$. Let $\hat{G}$ be the tempered unitary dual of $G$. There is a unique Borel measure $\hat{\mu}$ on $\hat{G}$, called the \emph{Plancherel measure} with respect to $\mu$, characterized by 
\begin{equation}
\label{Plancherel measure}
f(1)=\int_{\hat{G}} {\rm Tr}(\pi(f))\,d\hat{\mu}(\pi)
\end{equation}
for all locally constant, compactly supported functions $f\colon G\rightarrow\C$ and $\pi\in \hat{G}$. It is well known that the Plancherel measure is supported on the tempered dual $\hat{G}^{\rm temp}$, and that a representation $\pi$ is square-integrable if and only if the point $\pi$ in $\hat{G}$ has positive Plancherel measure.

For $\Omega \in \left\{{\rm I}, {\rm II}, {\rm III}, {\rm IV}, {\rm V}, {\rm VI}\right\}$, let $\hat{G}_{\Omega}$ be the part of the unitary dual that consists of all Iwahori-spherical representations of type $\Omega$. Let $m_{\Omega}$ be the total Plancherel measure of the Iwahori spherical representations of type $\Omega$, defined by 
\begin{equation}
\label{Plancherel measure Iwahori-spherical}
 m_{\Omega}=\int_{\hat{G}_{\Omega}}\,d\hat{\mu}(\pi).
\end{equation}
For example, $m_{\rm I}$ denotes the total Plancherel measure of the unramified dual. The quantity $m_{\rm II}$ is the total Plancherel measure of all representations of the form $\chi {\rm St}_{\GL(2)} \rtimes \sigma$, where $\chi,\sigma$ are unramified, unitary characters of $F^{\times}$ satisfying $\chi^2\sigma^2= 1$. Note that these are of type IIa, and that the non-tempered representations of type IIb do not contribute to $m_{\rm II}$. Similarly, only representation type IIIa contributes to $m_{\rm III}$, only representation type IVa contributes to $m_{\rm IV}$, and only representation type Va contribute to $m_{\rm V}$. Also,
\begin{equation}
\label{Plancherel measure for VI}
m_{\rm VI}=0,
\end{equation}
since the four tempered representations of type $\rm VIa/b$ are not square-integrable.

Now we consider the following open-compact subgroups of $\GSp(4,F)$ defined in \eqref{congruence subgroups for GSp4F}: $K=\GSp(4,\OF)$, ${\rm K}(\p)$, ${\rm Kl}(\p)$, ${\rm Si}(\p)$, and $I$. We use the same symbol for the images of these groups in $G =\PGSp(4,F)$.

\begin{lemma}
	\label{dimesion formulas}
We have
	\begin{align*}
	{\rm Vol}_{\mu}({\rm Kl}(\p))&=\dfrac{1}{(1+q)(1+q^2)}, &\quad {\rm Vol}_{\mu}({\rm K}(\p))&=\dfrac{1}{1+q^2}, \\
	{\rm Vol}_{\mu}({\rm Si}(\p))&=\dfrac{1}{(1+q)(1+q^2)}, &\quad {\rm Vol}_{\mu}(I)&=\dfrac{1}{(1+q)^2(1+q^2)}. 
	\end{align*}
\end{lemma}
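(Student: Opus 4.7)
The strategy is to compute each volume via the identity $\mathrm{Vol}_\mu(H) = [H:I]/[K:I]$, valid for any compact open $H\supset I$, using the normalization $\mathrm{Vol}_\mu(K)=1$ and the fact that the Iwahori $I$ is contained in all of $K$, $\mathrm{Kl}(\p)$, $\mathrm{Si}(\p)$, and $\mathrm{K}(\p)$.

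For the three subgroups contained in $K$, namely $I$, $\mathrm{Kl}(\p)$, and $\mathrm{Si}(\p)$, I would compute $[K:H]$ by reducing modulo $\p$. The surjection $K=\GSp(4,\OF)\twoheadrightarrow \GSp(4,\mathbb{F}_q)$ has kernel contained in $I$, and under it $I$, $\mathrm{Kl}(\p)$, $\mathrm{Si}(\p)$ are the preimages of the Borel, the Klingen parabolic, and the Siegel parabolic of $\GSp(4,\mathbb{F}_q)$, respectively. Hence each $[K:H]$ equals the number of $\mathbb{F}_q$-points of the corresponding partial flag variety of the symplectic space $\mathbb{F}_q^4$. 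Since every nonzero vector of a symplectic space is isotropic, the number of isotropic lines is $(q^4-1)/(q-1)=(1+q)(1+q^2)$, giving $[K:\mathrm{Kl}(\p)]=(1+q)(1+q^2)$. A double count of (isotropic line, Lagrangian plane) incidences, using that each Lagrangian plane contains $(q^2-1)/(q-1)=1+q$ isotropic lines, shows there are also $(1+q)(1+q^2)$ Lagrangian planes, so $[K:\mathrm{Si}(\p)]=(1+q)(1+q^2)$. Finally, each isotropic line extends to a Lagrangian plane in $1+q$ ways, so $[K:I]=(1+q)(1+q^2)\cdot(1+q)=(1+q)^2(1+q^2)$.

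For the paramodular group $\mathrm{K}(\p)$, which is not a subgroup of $K$, I would compute $[\mathrm{K}(\p):I]$ directly. Inspection of the matrix patterns in \eqref{congruence subgroups for GSp4F} shows $I\subsetneq\mathrm{K}(\p)$, with the strict enlargement occurring only at the $(1,4)$ entry (from $\OF$ to $\p^{-1}$) and the $(3,2)$ entry (from $\p$ to $\OF$). In Bruhat--Tits language, $\mathrm{K}(\p)$ is the parahoric attached to the middle vertex of the fundamental alcove in the local Dynkin diagram of type $\tilde{C}_2$ of $\PGSp(4,F)$, and its reductive quotient has root system $A_1\times A_1$. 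Equivalently, the two enlargement positions correspond to two commuting simple affine reflections $s,s'$ such that $\mathrm{K}(\p)=\bigsqcup_{w\in\{1,s,s',ss'\}}IwI$, and the Iwahori--Bruhat decomposition yields
\[
[\mathrm{K}(\p):I]=\sum_{w\in\{1,s,s',ss'\}}q^{\ell(w)}=1+2q+q^2=(1+q)^2.
\]

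Combining these indices with $\mathrm{Vol}_\mu(I)=1/[K:I]$ and $\mathrm{Vol}_\mu(H)=[H:I]\cdot\mathrm{Vol}_\mu(I)$ yields all four stated formulas. The principal obstacle is the direct computation of $[\mathrm{K}(\p):I]$: since $\mathrm{K}(\p)\not\subset K$, a naive reduction-mod-$\p$ argument is unavailable, and one must either invoke parahoric/Bruhat--Tits theory or explicitly enumerate Iwahori cosets in $\mathrm{K}(\p)$ using the $(1,4)$ and $(3,2)$ enlargements.
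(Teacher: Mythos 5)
Your argument is correct, but it takes a different route from the paper: the paper's proof is a one-line citation of \cite[Lemma~3.3.3]{RobertsSchmidt2007} for ${\rm Kl}(\p)$ and ${\rm K}(\p)$, with the remark that ${\rm Si}(\p)$ and $I$ are handled similarly, whereas you give a self-contained computation. Your index counts all check out: for the three subgroups contained in $K$, reduction mod $\p$ (surjective by smoothness and Hensel) identifies them with preimages of the Klingen parabolic, the Siegel parabolic, and the Borel of $\GSp(4,\mathbb{F}_q)$, and the point counts $(1+q)(1+q^2)$ isotropic lines, $(1+q)(1+q^2)$ Lagrangian planes (via your incidence double count), and $(1+q)^2(1+q^2)$ complete isotropic flags are all standard and correct; for the paramodular group, the identification of ${\rm K}(\p)$ with the parahoric at the middle vertex of the $\tilde C_2$ alcove, whose reductive quotient has Weyl group of type $A_1\times A_1$, gives $[{\rm K}(\p):I]=\sum_w q^{\ell(w)}=(1+q)^2$ and hence ${\rm Vol}_\mu({\rm K}(\p))=(1+q)^2/\bigl((1+q)^2(1+q^2)\bigr)=1/(1+q^2)$ as claimed. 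Two small points worth making explicit if you write this up: first, the lemma concerns volumes in $\PGSp(4,F)$, so you should note that the scalar matrices $\OF^\times\cdot 1_4$ lie in every group under consideration, whence all the indices you compute in $\GSp(4,F)$ agree with the indices of the images in $\PGSp(4,F)$; second, the Bruhat--Tits input for ${\rm K}(\p)$ is the only step that is not elementary, and as you observe it can be replaced by a direct enumeration of Iwahori cosets (which is essentially what Roberts and Schmidt do). What your approach buys is independence from the external reference and a uniform treatment of all four subgroups; what the paper's citation buys is brevity.
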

\begin{proof}
For ${\rm Kl}(\p)$ and ${\rm K}(\p)$, see \cite[Lemma~3.3.3]{RobertsSchmidt2007}. One can prove the cases for ${\rm Si}(\p)$ and $I$ in a similar manner. 
\end{proof}

Let $H$ be one of these five subgroups of $G$. Let $(\pi, V)$ be an Iwahori-spherical representation of $G$, and let $V^H$ be the space of $H$-fixed vectors. Let $f$ be the characteristic function of $H$. Then
\begin{equation}
{\rm Tr}(\pi(f))={\rm Vol}_{\mu}(H)\dim(V^H).
\end{equation}
Here, ${\rm Vol}_{\mu}(H)$ is the volume of $H$ with respect to $\mu$.
The quantity $\dim (V^H)$ is the same across all tempered representations in $\hat{G}_{\Omega}$, if, for type ${\rm VI}$, we view the $L$-packet ${\rm VIa/b}$ as one representation. We denote this common dimension by $d_{H,\Omega}$. Then, by \eqref{Plancherel measure} and \eqref{Plancherel measure Iwahori-spherical}, we get 
\begin{equation}
\label{relation between volume and Plancherel measure}
\begin{split}
1&=\sum\limits_{\Omega \in \left\{{\rm I}, {\rm II}, {\rm III}, {\rm IV}, {\rm V}, {\rm VI}\right\}}\int_{\hat{G}_{\Omega}} {\rm Tr}(\pi(f))\,d\hat{\mu}(\pi)\\
&={\rm Vol}_{\mu}(H)\sum\limits_{\Omega \in \left\{{\rm I}, {\rm II}, {\rm III}, {\rm IV}, {\rm V}, {\rm VI}\right\}}\,d_{H,\Omega}\cdot m_{\Omega}.
\end{split}
\end{equation}
We can now compute all the $m_\Omega$.
\begin{theorem}\label{Plancherel measure for Iwahori-spherical representations}
 The total Plancherel measure $m_{\Omega}$ of the tempered Iwahori-spherical representations of $\PGSp(4,F)$ of type $\Omega$ is given as follows.
	\begin{equation}
	\label{Table of Plancherel measure of Iwahori-spherical representations}
	\renewcommand{\arraystretch}{0.8}
	\renewcommand{\arraycolsep}{.3cm}
	\begin{array}{ccccccc} 
	\toprule
	\Omega&{\rm I}&	{\rm IIa}&	{\rm IIIa}&	{\rm IVa}&	{\rm Va}&	{\rm VIa/b}\\
		\toprule
	m_{\Omega}&1&q^2-1&\frac{(q-1)(q^2+q+2)}{2}&(q-1)(q^3-1)&\frac{q(q-1)^2}{2}&0\\
	\bottomrule
	\end{array}
	\end{equation}
\end{theorem}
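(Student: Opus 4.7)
The plan is to apply the trace identity
\[
1 \;=\; \mathrm{Vol}_\mu(H)\sum_{\Omega}d_{H,\Omega}\,m_\Omega
\]
derived just above the theorem to each of the five open-compact subgroups $H\in\{K,\mathrm{K}(\p),\mathrm{Kl}(\p),\mathrm{Si}(\p),I\}$, and to solve the resulting linear system for the unknowns $m_\Omega$. The value $m_{\mathrm{VIa/b}}=0$ has already been recorded in the text: the four tempered representations of type VIa/b (the inducing datum $\sigma$ runs over the finitely many unramified characters with $\sigma^2=1$, so they are isolated in $\hat G^{\mathrm{temp}}$) are not square-integrable, hence each carries zero Plancherel mass. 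That leaves five unknowns and five equations.

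I would start with $H=K$. Of the tempered Iwahori-spherical representations, only those of type I have nonzero $K$-fixed vectors, with $d_{K,\mathrm I}=1$; together with $\mathrm{Vol}_\mu(K)=1$ this gives $m_{\mathrm I}=1$ immediately. Substituting this into the remaining four identities indexed by $\mathrm{K}(\p),\mathrm{Kl}(\p),\mathrm{Si}(\p),I$ produces a $4\times 4$ linear system in $m_{\mathrm{IIa}},m_{\mathrm{IIIa}},m_{\mathrm{IVa}},m_{\mathrm{Va}}$, whose coefficient matrix is the restriction of \cite[Table~3]{Schmidt200501} to the five generic types (with the VIa/b column killed by the vanishing above), and whose right-hand side is $\mathrm{Vol}_\mu(H)^{-1}-d_{H,\mathrm I}$ with volumes supplied by Lemma~\ref{dimesion formulas}. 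This is, up to the volume scalings, the same combinatorial matrix that governs the global system \eqref{dimskreleq} on the generic types, already observed there to be non-singular by direct calculation.

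Straightforward back-substitution then yields the displayed polynomials in $q$. Concretely, the $\mathrm{K}(\p)$-equation isolates $m_{\mathrm{IIa}}$ once $m_{\mathrm{I}}=1$ is known, after which $\mathrm{Kl}(\p)$ or $\mathrm{Si}(\p)$ pins down $m_{\mathrm{IIIa}}$, and the two remaining equations (most efficiently the Iwahori one, since it involves all types) disentangle $m_{\mathrm{IVa}}$ from $m_{\mathrm{Va}}$. The step that will require the most care is purely clerical: correctly extracting the five rows of numbers $d_{H,\Omega}$ from \cite[Table~3]{Schmidt200501} for the tempered types, keeping straight that the combined $L$-packet VIa/b still contributes to the $d_{H,\cdot}$ entries even though its total Plancherel mass vanishes, and then performing the back-substitution without algebraic slips so that the final expressions match the entries of \eqref{Table of Plancherel measure of Iwahori-spherical representations}. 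No further conceptual input is needed beyond the trace identity, the dimension table, and the volume lemma.
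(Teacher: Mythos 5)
Your proposal is correct and is essentially identical to the paper's own proof: the paper likewise evaluates the trace identity on the five subgroups $K,{\rm K}(\p),{\rm Kl}(\p),{\rm Si}(\p),I$ using \cite[Table~3]{Schmidt200501}, Lemma~\ref{dimesion formulas} and $m_{\rm VI}=0$, and solves the resulting triangular-after-reordering linear system by back-substitution. The only cosmetic remark is that it is the ${\rm Si}(\p)$ equation (not the ${\rm Kl}(\p)$ one) that isolates $m_{\rm III}$, after which ${\rm Kl}(\p)$ gives $m_{\rm V}$ and the Iwahori equation gives $m_{\rm IV}$.
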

\begin{proof}
	Running through all $H\in \left\{K,{\rm K}(\p), {\rm Kl}(\p),{\rm Si}(\p), I\right\}$, we get the following system of equations using \cite[Table 3]{Schmidt200501}, Lemma~\ref{dimesion formulas}, and \eqref{relation between volume and Plancherel measure}:
	\begin{align*}
	\label{equations with local dim}
	K&: &\quad 1&=m_{\rm I} \nonumber\\
	{\rm K}(\p)&: &\quad (1+q^2)&=2m_{\rm I}+m_{\rm II}\nonumber\\
	{\rm Kl}(\p)&: &\quad (1+q)(1+q^2)&=4m_{\rm I}+2m_{\rm II}+m_{\rm III}+m_{\rm V}+m_{\rm VI}\\
	{\rm Si}(\p)&: &\quad (1+q)(1+q^2)&=4m_{\rm I}+m_{\rm II}+2m_{\rm III}+2m_{\rm VI}\nonumber\\
	I&: &\quad (1+q)^2(1+q^2)&=8m_{\rm I}+4m_{\rm II}+4m_{\rm III}+m_{\rm IV}+2m_{\rm V}+4m_{\rm VI}.\nonumber
	\end{align*}
	Then, using \eqref{Plancherel measure for VI}, the discussion after \eqref{Plancherel measure Iwahori-spherical}, and the above system of equations, we get \eqref{Table of Plancherel measure of Iwahori-spherical representations}.
\end{proof}
Let us combine, as before, the types III and VI. Hence we define $m_{\Omega}=m_{\rm III}+m_{\rm VI}$ ($=m_{\rm III}$) for $\Omega=\rm III+VI$. Assume that $F=\Q_p$. Then, comparing \eqref{Coefficients of Plancherel measure term and $(k-2)(k-1)$ term} and \eqref{Table of Plancherel measure of Iwahori-spherical representations}, we see that the coefficient $a_\Omega$ appearing in \eqref{relationship of sk and Plancherel measure} equals the Plancherel mass $m_{\Omega}$:
\begin{equation}\label{amequaleq}
 a_\Omega=m_\Omega\qquad\text{for }\Omega\in\{\text{I},\text{II},\text{III+VI},\text{IV},\text{V}\}.
\end{equation}
Here, we allowed ourselves to write $a_{\text{II}}$ for $a_{\text{IIa}}$ etc.

\subsection{More general limit multiplicity formulas} \label{Section on More general limit multiplicity formulas}
Let $k\geq3$ and $j\geq0$ be integers. In the following, let $\xi_{k,j}$ be the irreducible, finite-dimensional representation of $\Sp(4,\C)$ with highest weight $(k+j-3,k-3)$, where the weight is defined as in \cite{Schmidt2017}. By \cite{CaglieroTirao2004}, it is known that
\begin{equation}\label{findimrepformulaeq}
 \dim \xi_{k,j}=\frac{(j+1)(k-2)(k+j-1)(2k+j-3)}{6}.	
\end{equation}
The infinitesimal character of $\xi_{k,j}$ is $(k+j-1,k-2)$. Let $\Pi_{\rm disc}(k,j)$ be the $L$-packet consisting of the discrete series representations of $\PGSp(4,\R)$ with the same infinitesimal character $(k+j-1,k-2)$. It consists of a holomorphic (non-generic) discrete series representation $\mathcal{D}^{\rm hol}(k,j)$ with minimal $K$-type $(k+j,k)$ and a large (generic) discrete series representation $\mathcal{D}^{\rm gen}(k,j)$ with minimal $K$-type $(k+j,2-k)$. The representation $\mathcal{D}^{\rm hol}(k,j)$ is the archimedean component of the automorphic representations underlying vector-valued Siegel modular forms of weight $(k,j)$.

Note that, by Corollary~\ref{Corollary for sk and Plancherel measure} and equations \eqref{amequaleq} and \eqref{findimrepformulaeq},
\begin{equation}\label{limmulteq1}
 \lim_{k\rightarrow\infty}\frac{s_k(p,\Omega)}{2^{-6}3^{-2}5^{-1}\dim \xi_{k,0}}=m_{\Omega}.
\end{equation}
In this form, the result is reminiscent of the automorphic Plancherel density theorem in \cite{Shin2012} for $G=\PGSp(4)$, more precisely Corollary 4.12 of this paper. Roughly speaking, the automorphic Plancherel density theorem says that in a family of global representations for which the archimedean parameter tends to infinity, and which is unramified outside a finite set of finite places $T$, the local representations at the places in $T$ are equidistributed according to the Plancherel measure. However, there is a difference in that \cite{Shin2012} considers archimedean $L$-packets, whereas our representations are all holomorphic at infinity.

To clarify the relationship, we assume that $k\geq3$ and define the set $\tilde S_k(p,\Omega)$ as in Definition~\ref{Definition of mOmegak}, except we replace condition \textit{i)} by ``$\pi_\infty\in\Pi_{\rm disc}(k,0)$''. Let $\tilde s_k(p,\Omega)=\#\tilde S_k(p,\Omega)$. We also define Arthur type versions of these quantities, as in \eqref{SkArthurdecompeq} and \eqref{no. of cup form0}. Then, by the stability of the packets of type \textbf{(G)}, we have
\begin{equation}\label{stimes2eq}
 \tilde s_k^{\rm (\mathbf{G})}(p,\Omega)=2s_k^{\rm (\mathbf{G})}(p,\Omega).
\end{equation}
We note that $s_k(p,\Omega)$ can be replaced by $s_k^{\rm (\mathbf{G})}(p,\Omega)$ in \eqref{limmulteq1}. The reason is that, similarly as in \eqref{relations of Saito-Kurokawa type} and \eqref{general formula for Yoshida type}, the $s_k^{(*)}(p,\Omega)$ for the non-\textbf{(G)} types grow at most linearly or quadratically in $k$.
Substituting into \eqref{limmulteq1} and reverting back to all packet types with the same argument, we see that \eqref{limmulteq1} is equivalent to
\begin{equation}\label{limmulteq2}
 \lim_{k\rightarrow\infty}\frac{\tilde s_k(p,\Omega)}{2^{-5}3^{-2}5^{-1}\dim \xi_{k,0}}=m_{\Omega}.
\end{equation}
In this form the statement almost follows from Corollary 4.12 of \cite{Shin2012} (see the proof of Theorem \ref{Theorem gerenal PDT} below), except that the highest weights of $\xi_{k,0}$ lie on a wall, and hence this particular sequence of finite-dimensional representations does not satisfy the conditions in \cite[Def.~3.5]{Shin2012}. However, Lemma~4.9 of \cite{Shin2012}, where this hypothesis is used, can easily be proven directly for the $\xi_{k,0}$, using the known weight multiplicities from \cite{CaglieroTirao2004}.

\begin{remark}
 For $G=\PGSp(4)$, the signed measure $\bar{\mu}(G(\Q)\backslash G(\A_{\Q}))$ appearing in Definition~3.5 of \cite{Shin2012} is given by
  \begin{equation}\label{barmuformulaeq}
   \bar{\mu}(G(\Q)\backslash G(\A_{\Q}))=\frac{-1}{2^5 3^2 5}.
  \end{equation}
\end{remark}
This can be seen in two different ways. First, one can reduce it to a question for $\Sp(4)$ and then use  \cite[Theorems 4 and 5]{Serre1971}. Second, by considering the vector-valued case, one can reverse engineer the constant by letting $\hat U$ be the set of spherical, tempered representations at a single place in \cite[Theorem 4.11]{Shin2012} and comparing with the known formula \cite[Theorem 7.1]{Wakatsuki2012}.

Hence we see that \eqref{limmulteq1} does indeed follow from Corollary 4.12 of \cite{Shin2012}. In fact, we can use this corollary and the same arguments to expand the result \eqref{limmulteq1} so as to include more general weights and more than one ramified place. To this end we first make our Definition~\ref{Definition of mOmegak} more general.
\begin{definition}\label{Definition of SkSOmega}
 Let $k\geq3$ and $j\geq0$ be integers, and let $T$ be a finite set of finite primes. For $p\in T$, let $\Omega_p\in\{\text{\rm I},\text{\rm II},\text{\rm III+VI},\text{\rm IV},\text{\rm V}\}$, and let $\Omega_T=(\Omega_p)_{p\in T}$. We denote by $S_{k,j}(T,\Omega_T)$ the set of cuspidal automorphic representations $\pi\cong\otimes_{v \le \infty} \pi_v$ of $G(\A_{\Q})$ satisfying the following properties:
 \begin{enumerate}
  \item  $\pi_{\infty}=\mathcal{D}^{\rm hol}(k,j)$. 
  \item  For each $v\not\in T\cup \{\infty\}$, $\pi_{v}$ is unramified. 
  \item  For each $p\in T$, $\pi_{p}$ is an Iwahori-spherical representation of $G(\Q_p)$ of type $\Omega_p$.
 \end{enumerate}
 Let $s_{k,j}(T,\Omega_T)=\#S_{k,j}(T,\Omega_T)$.
\end{definition}
\begin{theorem}\label{Theorem gerenal PDT}
 Let $T$ and $\Omega_T$ be as in Definition~\ref{Definition of SkSOmega}. Let $m_{\Omega_p}$ be the total Plancherel measure of the tempered Iwahori-spherical representations of $\PGSp(4,\Q_p)$ of type $\Omega_p$, given in \eqref{Table of Plancherel measure of Iwahori-spherical representations}. Then 
 \begin{equation}\label{limmulteq3}
  \lim_{k+j\rightarrow\infty}\frac{s_{k,j}(T,\Omega_T)}{2^{-6}3^{-2}5^{-1}\dim \xi_{k,j}}=\prod\limits_{\substack{p\, \in\, T}} m_{\Omega_p},
 \end{equation}
where $\dim \xi_{k,j}$ is given in \eqref{findimrepformulaeq}.
\end{theorem}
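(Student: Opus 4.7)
The plan is to deduce the theorem from \cite[Cor.~4.12]{Shin2012}, following the same strategy used in the scalar case to derive \eqref{limmulteq2} and then \eqref{limmulteq1}. First I introduce an auxiliary set $\tilde S_{k,j}(T,\Omega_T)$ obtained from Definition~\ref{Definition of SkSOmega} by replacing condition \textit{i)} with $\pi_\infty\in\Pi_{\rm disc}(k,j)$, and I write $\tilde s_{k,j}(T,\Omega_T)$ for its cardinality. Decomposing both sides according to the Arthur type classification as in \eqref{SkArthurdecompeq}, the same argument that gave \eqref{stimes2eq} (stability of \textbf{(G)}-packets, combined with the fact that the \textbf{(G)}-packets at infinity are singletons containing either $\mathcal{D}^{\rm hol}(k,j)$ or $\mathcal{D}^{\rm gen}(k,j)$) yields $\tilde s_{k,j}^{\rm(\mathbf{G})}(T,\Omega_T)=2\,s_{k,j}^{\rm(\mathbf{G})}(T,\Omega_T)$.

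Next, I argue that in the limit only the \textbf{(G)}-contributions matter. The representations in packets of type \textbf{(P)} and \textbf{(Y)} are parametrized by automorphic representations of $\GL(2,\A_\Q)$ (as used in Theorems~\ref{Theorem of Saito-Kurokawa type} and~\ref{relations of Yoshida type}), so their counts grow at most quadratically in $k+j$, while $\dim\xi_{k,j}$ is a cubic polynomial in $k$ and $j$ by \eqref{findimrepformulaeq}. By Corollary~\ref{BQautopropcor2} the \textbf{(Q)} and \textbf{(B)} packets contribute zero. Hence $s_{k,j}(T,\Omega_T)=s_{k,j}^{\rm(\mathbf{G})}(T,\Omega_T)+O((k+j)^2)$, and analogously for $\tilde s_{k,j}$, so it suffices to prove
\[
 \lim_{k+j\to\infty}\frac{\tilde s_{k,j}(T,\Omega_T)}{2^{-5}3^{-2}5^{-1}\dim\xi_{k,j}}=\prod_{p\in T}m_{\Omega_p}.
\]
This is the shape of the automorphic Plancherel density theorem: apply \cite[Cor.~4.12]{Shin2012} to $G=\PGSp(4)$, take $\hat U_p\subset\hat{G(\Q_p)}$ to be the subset of tempered Iwahori-spherical representations of type $\Omega_p$ (whose total Plancherel mass is $m_{\Omega_p}$ by Theorem~\ref{Plancherel measure for Iwahori-spherical representations}), and use the archimedean $L$-packet $\Pi_{\rm disc}(k,j)$ at infinity. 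The absolute constant $2^{-5}3^{-2}5^{-1}$ is $|\bar{\mu}(G(\Q)\backslash G(\A_\Q))|$ as recorded in \eqref{barmuformulaeq}, and the extra factor of $2$ dividing it in \eqref{limmulteq3} accounts for selecting only $\mathcal{D}^{\rm hol}(k,j)$ from the two-element archimedean packet.

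The main obstacle is that the weights $(k+j-3,k-3)$ of $\xi_{k,j}$ lie on (or near) a wall of the Weyl chamber when $j=0$, so the sequence $\{\xi_{k,j}\}_{k+j\to\infty}$ does not strictly satisfy the regularity hypothesis \cite[Def.~3.5]{Shin2012}. As remarked in the paper just before Definition~\ref{Definition of SkSOmega}, this is not an essential obstruction: \cite[Lemma~4.9]{Shin2012} is the only place where the hypothesis is used, and it can be verified by hand for $\xi_{k,j}$ using the explicit weight multiplicities of \cite{CaglieroTirao2004} (together with \eqref{findimrepformulaeq}). Once this verification is in place, Corollary~4.12 of \cite{Shin2012} applies verbatim and yields the claimed limit.
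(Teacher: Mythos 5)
Your proposal is correct and follows essentially the same route as the paper: reduce to the auxiliary counts $\tilde s_{k,j}(T,\Omega_T)$ via stability of the \textbf{(G)}-packets (giving the factor of $2$), discard the \textbf{(P)}, \textbf{(Y)}, \textbf{(Q)}, \textbf{(B)} contributions as asymptotically negligible, and invoke \cite[Cor.~4.12]{Shin2012} with the test set of tempered Iwahori-spherical representations of type $\Omega_p$ at each $p\in T$, handling the on-the-wall weights by verifying \cite[Lemma~4.9]{Shin2012} directly. The paper's proof merely makes the same computation explicit by unraveling the measure $\widehat{\mu}^{\rm cusp}_{\phi^{S,\infty},\xi_{k,j}}(\hat f_T)$ from \cite[Eq.~(3.5)]{Shin2012} and using \eqref{barmuformulaeq}; there is no substantive difference.
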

\begin{proof}
We will use the notations of \cite[Theorem~4.11]{Shin2012}. Let $\phi^{S,\infty}$ be the characteristic function of $\prod_{p \not\in T}G(\Z_p)$. Let $\hat{f}_T$ be the characteristic function, on the unitary dual of $\prod_{p\in T}G(\Q_p)$, of those representations that are Iwahori-spherical of type $\Omega_p$ at $p\in T$. Then, unraveling the definition of the measure $\widehat{\mu}^{\rm cusp}_{\phi^{S,\infty},\xi_{k,j}}$ in \cite[Eq.~(3.5)]{Shin2012}, we see
\begin{align}
	&\widehat{\mu}^{\rm cusp}_{\phi^{S,\infty},\xi_{k,j}}(\hat{f}_T)
	=\frac{(-1)^{\dim(G(\R)/K_{\infty})/2}}{\bar{\mu}(G(\Q)\backslash G(\A_{\Q})) \dim \xi_{k, j}}\left[\tilde  s_{k,j}^{\bf(G)}(T,\Omega_T)+\tilde  s_{k,j}^{\bf(P)}(T,\Omega_T)+\tilde  s_{k,j}^{\bf(Y)}(T,\Omega_T)\right].
\end{align}
Here $K_{\infty}$ is the maximal compact subgroup of $G(\R)$ and we have $\dim(G(\R)/K_{\infty})=6$. The $\tilde s_{k,j}^{\bf(*)}(T,\Omega_T)$ are defined similarly to in the scalar-valued case above. Then, using \eqref{barmuformulaeq} and an argument similar to the one above that connected \eqref{limmulteq1} and \eqref{limmulteq2}, we obtain \eqref{limmulteq3}.
\end{proof}
\begin{remark}
Instead of Iwahori-spherical representations, one can work with any relatively quasi-compact subset of the unitary dual of $\prod_{p\in T}G(\Q_p)$. We restricted ourselves to Iwahori-spherical representations because for these we know the right hand side of \eqref{limmulteq3} explicitly.
\end{remark}

\begin{appendix}
\section{Some numerical examples}\label{AppendixB}
Here we give some numerical values of $s_k(p,\Omega)$  for $\Omega \in \lbrace \rm IIa, IIIa+VIa/b, Va, IVa\rbrace$ using Theorems~\ref{Theorem of IIa1}-\ref{Theorem of Va1}. We have used Mathematica \cite{Mathematica} to compute these numerical examples. 

	\begin{table}[H]	
	\caption{$s_k(p,{\rm IIa})$ for $3\leq k\le 35$ and $2\le p<20$.}
		\label{Table for mIIa odd}
		\renewcommand{\arraystretch}{0.8}
		\renewcommand{\arraycolsep}{.2cm}
		\[ \begin{array}{cccccccccccccccccccccccccc} 
		\toprule	
		p\backslash k&3&5&7&9&11&13&15&17&19&21&23&25&27&29&31&33&35
		\\\toprule
		2&0&0&0&0&0&0&0&0&1&1&3&2&5&5&8&9&12
		\\\toprule
		3&0&0&0&0&0&0&1&1&2&5&6&7&13&14&18&26&28
		\\
		\toprule
		5&0&0&0&0&1&2&4&7&11&16&23&30&40&51&64&79&96
		\\
		\toprule
		7&0&0&0&1&3&6&10&17&25&36&50&65&85&108&134&165&199
		\\
		\toprule
		11&0&0&0&2&7&14&26&42&63&90&124&164&213&270&336&412&499
		\\
		\toprule
		13&0&0&2&6&14&26&43&67&98&137&186&243&313&394&488&596&718
		\\
		\toprule
		17&0&0&2&9&22&42&71&112&164&231&314&412&531&670&830&1015&1224
		\\
		\toprule
		19&0&1&4&13&30&55&93&144&210&294&398&522&671&845&1046&1277&1540
		\\
		\bottomrule
		\end{array}\]
	\end{table}

	\begin{table}[H]
		\label{Table for mIIa even}
		\renewcommand{\arraystretch}{0.8}
		\renewcommand{\arraycolsep}{.23cm}
		\[ \begin{array}{cccccccccccccccccccccccccc} 
		\toprule	
		p\backslash k&4 &6&8&10&12&14&16&18&20&22&24&26&28&30&32&34
		\\
		\toprule
		2&0&0&0&0&0&0&1&1&1&2&3&3&6&5&8&9
		\\\toprule
		3&0&0&0&0&1&1&2&5&4&7&11&12&16&22&24&30
		\\\toprule
		5&0&0&0&1&2&4&7&11&15&22&29&38&49&61&75&92
		\\
		\toprule
		7&0&0&1&2&5&9&15&23&32&45&60&78&100&124&153&186
		\\
		\toprule
		11&0&0&1&5&11&21&35&54&78&109&146&191&244&306&377&459
		\\
		\toprule
		13&0&1&4&10&20&35&56&84&118&163&216&280&356&443&544&660\\
		\toprule
		17&0&1&6&15&32&57&92&139&198&273&364&473&602&751&924&1121
		\\
		\toprule
		19&0&2&8&21&42&74&118&177&252&346&460&597&758&946&1162&1409
		\\
		\bottomrule
		\end{array}\]
	\end{table}

\begin{table}[H]
	\caption{$s_k(p,{\rm IIIa+VIa/b})$ for $3\leq k\le30$ and $2\le p<20$.}
		\label{Table for mIIIa/VI odd}
		\renewcommand{\arraystretch}{0.8}
		\renewcommand{\arraycolsep}{.23cm}
		\[ \begin{array}{cccccccccccccccccccccccccc} 
		\toprule	
		p\backslash k&3&5&7&9&11&13&15&17&19&21&23&25&27&29
		\\
		\toprule
		2&0&0&0&0&0&0&0&0&0&0&0&1&1&2
		\\
		\toprule
		3&0&0&0&0&0&0&0&1&1&2&4&6&8&13
		\\
		\toprule
		5&0&0&0&0&1&2&6&9&17&24&37&50&70&89
		\\
		\toprule
		7&0&0&0&2&6&11&24&40&59&91&128&170&230&297
		\\
		\toprule
		11&0&0&3&13&33&68&121&195&295&424&585&785&1023&1306
		\\
		\toprule
		13&0&1&8&28&68&124&224&350&521&744&1026&1350&1770&2242
		\\
		\toprule
		17&0&2&23&70&165&312&537&837&1247&1756&2404&3178&4120&5211
		\\
		\toprule
		19&0&5&32&107&241&451&772&1207&1771&2511&3418&4509&5839&7391
		\\
		\bottomrule
		\end{array}\]
	\end{table}

	\begin{table}[H]
		\label{Table for mIIIa/VI even}
		\renewcommand{\arraystretch}{0.8}
		\renewcommand{\arraycolsep}{.21cm}
		\[ \begin{array}{cccccccccccccccccccccccccc} 
		\toprule	
		p\backslash k&4&6&8&10&12&14&16&18&20&22&24&26&28&30\\
		\toprule
		2&0&0&0&0&1&2&3&5&6&8&10&14&16&20
		\\
		\toprule
		3&0&0&1&2&4&7&11&15&20&27&33&43&53&63
		\\
		\toprule
		
		5&0&1&4&8&16&25&39&54&75&97&127&159&199&240
		\\
		\toprule
		7&0&3&9&21&38&60&93&133&178&241&311&391&491&602
		\\
		\toprule
		11&0&9&30&67&123&202&308&444&614&822&1071&1367&1710&2107
		\\
		\toprule
		13&3&18&52&111&203&323&500&715&987&1324&1732&2195&2766&3401
		\\
		\toprule
		17&5&35&106&226&417&681&1047&1510&2104&2821&3699&4725&5942&7330
		\\
		\toprule
		19&7&49&142&311&569&931&1434&2079&2881&3889&5092&6509&8193&10127
		\\
		\bottomrule
		\end{array}\]
	\end{table}
	
	\begin{table}[H]
	\caption{$s_k(p,{\rm IVa})$ for $3\leq k\le29$ and $2\le p<20$.}
		\label{Table for mIVa odd}
		\renewcommand{\arraystretch}{0.8}
		\renewcommand{\arraycolsep}{.08cm}
		\[ \begin{array}{ccccccccccccccccccccc} 
		\toprule	
		p\backslash k&3&5&7&9&11&13&15&17&19&21&23&25&27&29
		\\	\toprule
		2&0&0&0&0&0&1&1&2&4&5&6&11&11&15	\\	\toprule
		3&0&0&0&1&2&6&9&16&24&35&46&66&81&106	\\	\toprule	
		5&0&1&7&17&39&75&121&188&279&385&522&693&884&1116
		\\	\toprule
		7&0&8&31&88&181&332&541&832&1201&1678&2253&2962&3789&4774
		\\
		\toprule
		11&2&56&235&610&1255&2260&3661&5576&8055&11170&14995&19640&25101&31536
		\\
		\toprule
		13&8&118&477&1232&2529&4514&7335&11136&16065&22268&29891&39082&49985&62748
		\\
		\toprule
		17&23&362&1456&3728&7632&13606&22058&33456&48240&66800&89622&117146&149744&187920
		\\
		\toprule
		19&38&578&2295&5892&12033&21430&34739&52680&75897&105126&140995&184256&235521&295554
		\\
		\bottomrule
		\end{array}\]
	\end{table}

	\begin{table}[H]
		\label{Table for mIVa even}
		\renewcommand{\arraystretch}{0.8}
		\renewcommand{\arraycolsep}{.125cm}
		\[ \begin{array}{ccccccccccccccccccccc} 
		\toprule	
		p\backslash k&4&6&8&10&12&14&16&18&20&22&24&26&28
		\\	\toprule
		2&0&0&0&1&1&2&2&3&6&7&9&13&14
		\\	\toprule
		3&0&1&2&6&9&16&22&33&46&62&79&104&126
		\\	\toprule		5&1&7&20&41&76&128&193&282&398&532&700&904&1132
		\\	\toprule
		7&5&26&73&162&297&498&767&1126&1575&2138&2811&3624&4567
		\\	\toprule
		11&25&150&445&984&1839&3100&4805&7070&9945&13504&17819&23000&29045
		\\	\toprule
		13&51&292&869&1930&3619&6084&9471&13926&19597&26628&35167&45360&57353
		\\	\toprule
		17&144&848&2550&5674&10672&17984&28016&41238&58090&78960&104336&134656&170294
		\\	\toprule
		19&225&1326&3979&8888&16713&28170&43911&64660&91061&123846&163647&211212&267157
		\\	\bottomrule
		\end{array}\]
	\end{table}

\begin{table}[H]
	\caption{$s_k(p,{\rm Va})$  for $3\leq k\le30$ and $2\le p<20$.}
		\label{Table for mVa odd}
		\renewcommand{\arraystretch}{0.8}
		\renewcommand{\arraycolsep}{.23cm}
		\[ \begin{array}{cccccccccccccccccccccccccc} 
		\toprule	
		p\backslash k&3&5&7&9&11&13&15&17&19&21&23&25&27&29\\
		\toprule
		2&0&0&0&0&0&0&1&1&1&2&2&3&4&5\\
		\toprule
		3&0&0&0&0&1&1&3&4&6&7&11&13&17&21\\
		\toprule
		5&0&0&1&4&7&11&19&27&36&51&66&82&106&130\\
		\toprule
		7&0&1&5&10&21&33&54&76&109&144&192&243&309&378\\
		\toprule
		11&0&3&15&38&76&125&205&298&420&573&761&970&1240&1533\\
		\toprule
		13&0&8&28&66&127&216&339&500&705&959&1267&1635&2067&2569\\
		\toprule
		17&0&15&56&141&274&467&746&1106&1560&2141&2837&3661&4653&5794\\
		\toprule
		19&1&20&81&192&381&652&1037&1536&2187&2982&3967&5126&6513&8106\\
		\bottomrule
		\end{array}\]
	\end{table}
	
	\begin{table}[H]
		\label{Table for mVa even}
		\renewcommand{\arraystretch}{0.8}
		\renewcommand{\arraycolsep}{.23cm}
		\[ \begin{array}{cccccccccccccccccccccccccc} 
		\toprule	
	p\backslash k&4&6&8&10&12&14&16&18&20&22&24&26&28&30\\
		\toprule
		2&0&0&0&0&0&0&0&0&0&0&0&0&0&1\\
		\toprule
		3&0&0&0&0&0&0&1&0&2&2&3&4&7&7\\
		\toprule
		5&0&0&0&1&2&3&7&11&15&24&33&42&58&74\\
		\toprule
		7&0&0&1&3&8&14&26&39&61&84&118&154&203&255\\
		\toprule
		11&0&0&5&18&43&75&135&205&300&423&578&750&980&1230\\
		\toprule
		13&0&3&14&39&82&149&245&375&545&759&1023&1342&1721&2166\\
		\toprule
		17&0&8&35&101&208&368&608&923&1325&1848&2480&3232&4147&5205\\
		\toprule
		19&0&12&58&146&306&540&882&1330&1922&2652&3564&4644&5944&7442\\
		\bottomrule
		\end{array}\]
	\end{table}

\end{appendix}

\vspace*{0.1in}
\textbf{Acknowledgments.} We would like to thank Tomoyoshi Ibukiyama, Kimball Martin, Cris Poor, Sug Woo Shin and Satoshi Wakatsuki for their helpful comments. We would also like to thank the referee for the detailed comments and suggestions. 

\bibliographystyle{plain}
\bibliography{On_counting_cuspidal_automorphic_representations_for_GSp4.bib}

\vspace{5ex}
\noindent Department of Mathematics, Fordham University, Bronx, New York 10458, USA.

\noindent E-mail address: {\tt manami.roy.90@gmail.com}

\vspace{2ex}
\noindent Department of Mathematics, University of North Texas, Denton, TX 76203-5017, USA.

\noindent E-mail address: {\tt ralf.schmidt@unt.edu}

\vspace{2ex}
\noindent Department of Mathematics, University of South Carolina, Columbia, SC 29208, USA.

\noindent E-mail address: {\tt yishaoyun926@gmail.com}
\end{document}